\title{Chains and antichains in the Weihrauch lattice}
\author[Lempp]{Steffen Lempp}
\author[Marcone]{Alberto Marcone}
\author[Valenti]{Manlio Valenti}
\address[Steffen Lempp]{Department of Mathematics\\
University of Wisconsin - Madison\\
Madison, Wisconsin 53706\\
USA}
\email{\href{mailto:lempp@math.wisc.edu}{lempp@math.wisc.edu}}
\address[Alberto Marcone]%
  {Dipartimento di Scienze Matematiche, Informatiche e Fisiche\\
  Universit\`a di Udine\\
  33100 Udine\\
  Italy}
\email{\href{mailto:alberto.marcone@uniud.it}{alberto.marcone@uniud.it}}
\address[Manlio Valenti]{Department of Computer Science\\
Swansea University\\
Swan\-sea, SA1 8EN\\
UK}
\email{\href{mailto:manliovalenti@gmail.com}{manliovalenti@gmail.com}}
\thanks{The first author's research was partially supported by AMS-Simons Foundation Collaboration Grant 626304.
The second and third authors' research was partially supported by the Italian PRIN 2017 \emph{Mathematical Logic: models, sets, computability}. The second author was also partially supported by the Italian PRIN 2022 \emph{Models, sets and classifications}, prot.\ 2022TECZJA, funded by the European Union - Next Generation EU. A significant part of this work was carried out while the third author was affiliated with the University of Udine and the University of Wisconsin-Madison. The authors would like to thank Paul Shafer for useful conversation on the topics of the paper. We also thank the anonymous referee for their careful reading of the paper and their valuable comments.}
\date{}
\subjclass{Primary 03D30; Secondary 03D78}
\keywords{Weihrauch degrees, Medvedev degrees, chains, cofinality, antichains}
\begin{document}

\maketitle

\begin{abstract}
    We study the existence and the distribution of ``long'' chains in the
    Weihrauch degrees, mostly focusing on chains with uncountable
    cofinality. We characterize when such chains have an upper bound and
    prove that there are no cofinal chains (of any order type) in the
    Weihrauch degrees. Furthermore, we show that the existence of coinitial
    sequences of non-zero degrees is equivalent to $\CH$. Finally, we
    explore the extendibility of antichains, providing some necessary
    conditions for maximality.
\end{abstract}

\section{Introduction}

Weihrauch reducibility is a notion of reducibility between partial
multi-valued functions that is useful to calibrate their uniform
computational strength. The theory of Weihrauch reducibility has often been
studied in connection with reverse mathematics, as it can be used to analyze
the computability-theoretical content of $\forall\exists$-statements.

Many results in the literature on Weihrauch degrees focus on characterizing
the degree of specific problems. In contrast, several natural questions on
the structure of Weihrauch degrees are still open. In this paper, we study
the existence of chains and antichains in the Weihrauch degrees.

In a given partial order $(P,\le)$, a \textdef{chain} is a linearly ordered
subset of $P$. Conversely, an \textdef{antichain} is a set of pairwise
incomparable elements of $P$. 
We use $\family{a_i}{i\in I}$ to denote a family indexed with elements in some set $I$. We stress that, for example, a chain $\family{a_i}{i\in \omega}$ of elements of $P$ could be ill-founded or even dense. On the other hand, if $L$ is a linear order, we write $\sequence{a_x}{x \in L}$ if the chain $\family{a_x}{x\in L}$ is order-isomorphic to $L$ via the map $x \mapsto a_x$.

A set $S\subseteq P$ is called \textdef{cofinal} (in $P$) if for every $p\in
P$ there is $q \in S$ such that $p\le q$. The \textdef{cofinality} of $P$,
denoted $\cofinality(P)$, is the least cardinality of a cofinal chain. A cardinal $\kappa$ is called \textdef{regular} if it is equal to its cofinality. The
dual notion of cofinality is \textdef{coinitiality}: a set $S\subseteq P$ is
called \textdef{coinitial} if for every $p\in P$ there is $q \in S$ such that
$q\le p$. Equivalently, a coinitial set in $P$ is a cofinal set in
$P^*:=(P,\ge)$. The \textdef{coinitiality} of $P$, denoted
$\coinitiality(P)$, is the least cardinality of a coinitial chain. Both
cofinality and coinitiality need not be well-defined for an arbitrary partial
order (e.g.\ if the partial order is only made of two incomparable elements).
A generalization of cofinality (resp.\ coinitiality) that is well-defined for
every partial order $P$ is \textdef{set-cofinality} (resp.\
\textdef{set-coinitiality}), namely the least cardinality of a cofinal
(resp.\ coinitial) subset of $P$. The set-cofinality and the set-coinitiality
of $P$ are denoted by $\setcofinality(P)$ and $\setcoinitiality(P)$,
respectively. The two notions clearly agree on linear orders.

In this paper, we study the existence and extendibility of chains in the
Weihrauch degrees. While we are mostly interested in the extendibility of
chains of order type $\kappa$ or $\kappa^*$, for some cardinal $\kappa$ with
$\cofinality(\kappa)>\omega$, many techniques apply to partially ordered
families of degrees as well. We characterize when a chain of Weihrauch degrees
has an upper bound (\thref{thm:no_upper_bound}) and, for every $\eta\le
\continuum := 2^{\omega}$ with uncountable cofinality, we provide an explicit
example of a chain of order type $\eta$ with no upper bound
(\thref{thm:long_chain_no_UB}). This can then be used to show that there are
no cofinal chains in the Weihrauch degrees of any order type
(\thref{thm:no_cofinal_chains_W}).

In contrast, the picture of the coinitiality of the Weihrauch degrees looks
quite different: while there are descending chains of order type $\continuum$
with no non-zero lower bound, the existence of a coinitial sequence in the
Weihrauch degrees is independent of $\ZFC$ and equivalent to $\CH$
(\thref{thm:coinitial_chains_W}). We also characterize when chains of order
type $\kappa+\lambda^*$, for cardinals $\kappa$ and $\lambda$ with
uncountable cofinality, admit an intermediate degree
(\thref{thm:intermediate_degree}), and show that each interval in the
Weihrauch lattice is either finite or uncountable (\thref{thm:intervals_W}).

Finally, we study the extendibility of antichains: we show that every non-trivial
antichain of size $<\continuum$ can be extended (\thref{thm:<c_extendible}),
and provide a necessary condition for an antichain to be maximal
(\thref{thm:extending_antichain_wei_2}).

\subsection{Background}

We now briefly recall the main notions and fix the notation that will be
needed in the rest of the paper. With a small abuse of notation, we will
often identify a Turing/Medvedev/Weihrauch degree with one of its
representatives.

Given two partial multi-valued functions $f,g\pmfunction{\Baire}{\Baire}$, we
say that $f$ is \textdef{Weihrauch reducible} to $g$, and write
$f\weireducible g$, if there are two computable functionals
$\Phi\pfunction{\Baire}{\Baire}$ and
$\Psi\pfunction{\Baire\times\Baire}{\Baire}$ such that, for every
$p\in\dom(f)$,
\[ \Phi(p)\in \dom(g) \land (\forall q\in g(\Phi(p))) (\Psi(p,q)\in f(p)). \]
We use $(\WeiDeg, \weireducible)$ to denote the degree structure of the
\textdef{Weihrauch degrees}, namely, the partial order induced by
$\weireducible$ on the equivalence classes. As a notational convenience, we
also use $(\WeiZero, \weireducible)$ for the restriction of the Weihrauch
degrees to non-empty problems. For a more comprehensive presentation of
Weihrauch reducibility, we refer the reader to \cite{BGP17}. We mention that
while Weihrauch reducibility is often defined in the more general context of
partial multi-valued functions on represented spaces, every Weihrauch degree
has a representative which is a partial multi-valued function on $\Baire$
(see \cite[Lem.\ 11.3.8]{BGP17}). In other words, in order to study the
structure $(\WeiDeg, \weireducible)$, there is no loss of generality in
restricting our attention to problems on the Baire space. With a small abuse
of notation, we can talk about multi-valued functions on spaces like
$\mathbb{N}$, $\baire$, or $(\Baire)^{\mathbb{N}}$, as their elements can be
canonically represented with elements in $\Baire$.

The Weihrauch degrees form a distributive lattice with a bottom element (the
empty set) and no top element. The join and meet operators can be obtained,
respectively, by lifting the following two operators on multi-valued
functions to the degree structure:
\begin{itemize}
	\item $(f_0\sqcup f_1)(i,x):=f_i(x)$, where $\dom(f_0\sqcup f_1):=\bigcup_{i<2} \{i\}\times \dom(f_i)$.
	\item $ (f_0\sqcap f_1)(x_0,x_1):= \bigcup_{i<2} \{i\}\times f_i(x_i)$, where $\dom(f_0\sqcap f_1):=\dom(f_0) \times \dom(f_1)$.
\end{itemize}

We observe that neither of the above two operators can be generalized to
obtain a countable join/meet. Indeed, while the operators
$\sequence{f_i}{i\in\mathbb{N}}\mapsto \bigsqcup_{i\in\mathbb{N}} f_i$ and
$\sequence{f_i}{i\in\mathbb{N}}\mapsto \bigsqcap_{i\in\mathbb{N}} f_i$
defined as $\bigsqcup_{i\in\mathbb{N}} f_i(i,x):=f_i(x)$ and
$\bigsqcap_{i\in\mathbb{N}}
f_i(\sequence{x_j}{j\in\mathbb{N}}):=\bigcup_{i\in\mathbb{N}} \{i\}\times f_i(x_i)$,
respectively, are well-defined and identify an upper and an lower bound for
the family, they do not lift to the Weihrauch degrees and are not, in
general, the supremum or the infimum of $\{f_i \st i\in\mathbb{N}\}$.

As a simple counterexample, let $f_i:=f:=p\mapsto 0$ and observe that $f\weiequiv (p\mapsto 1)$. Now, for every non-computable set $A$, we can consider the family $\family{g_i}{i\in\mathbb{N}}$ defined as $g_i:=p\mapsto 1$ if $i\in A$ and $g_i:=f_i$ otherwise. In other words, we are replacing some of the $f_i$'s with Weihrauch-equivalent problems. However, while $\bigsqcup_{i\in\mathbb{N}} f_i=(i,p)\mapsto 0$, and hence is constant and computable, $\bigsqcup_{i\in\mathbb{N}} g_i$ computes the characteristic function of $A$. This shows that $\bigsqcap_{i\in\mathbb{N}}$ is not a degree-theoretic operator. 

With a similar strategy, we can show that $\bigsqcap_{i\in\mathbb{N}}$ is not degree-theoretic: let $f_i:=f$ be such that $\dom(f):=\{0\}$ and, for a fixed non-computable set $A$, define $g_i$ with $\dom(g_i):=\{1\}$ if $i\in A$ and $g_i:=f$ otherwise. By definition, both $\bigsqcap_{i\in\mathbb{N}} f_i$ and $\bigsqcap_{i\in\mathbb{N}} g_i$ only have one input, but the former has a computable input, while the only input for the latter is the characteristic function of $A$, hence $\bigsqcap_{i\in\mathbb{N}} f_i \not\weireducible \bigsqcap_{i\in\mathbb{N}} g_i$.

It is known that $(\WeiDeg,\weireducible)$ is not an $\omega$-complete join/meet
semilattice (\cite[Cor.\ 3.17]{HiguchiPauly13}). In fact, it is also known
that no non-trivial countable suprema exist.

\begin{theorem}[{\cite[Prop.\ 3.15]{HiguchiPauly13}}]
    \thlabel{thm:no_nontrivial_countable_sup}
    A family $\{ a_i \st i\in\mathbb{N}\}$ of Weihrauch degrees has a supremum iff it is already the supremum of $\{ a_i \st i<n\}$ for some $n\in\mathbb{N}$. In particular, no (strictly ascending) chain $\sequence{a_i}{i\in\omega}$ has a supremum.
\end{theorem}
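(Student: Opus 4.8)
The ``if'' direction is immediate; for the ``only if'' direction the plan is to reduce to strictly ascending $\omega$-chains and then, in that case, to rule out a supremum by a direct construction.

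\emph{Reduction.} Put $b_n:=a_0\sqcup\dots\sqcup a_{n-1}$. Since $a_i\weireducible b_{i+1}$ and $b_n$ is the least upper bound of $\{a_0,\dots,a_{n-1}\}$, the families $\family{a_i}{i\in\mathbb{N}}$ and $\family{b_n}{n\in\mathbb{N}}$ have the same upper bounds, hence the same supremum if one exists, and $\sup_i a_i$ equals the supremum of some $\{a_i\st i<n\}$ exactly when it equals some $b_n$. The sequence $\sequence{b_n}{n}$ is non-decreasing; if it is eventually constant, equal to $b_N$ from some point on, then $b_N$ is the supremum and we are in the asserted case. Otherwise, after passing to a cofinal subsequence, I may assume $b_0\strictlyweireducible b_1\strictlyweireducible\dots$, and it then suffices to prove that such a chain has no supremum. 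This also yields the last sentence of the statement: if a strictly ascending $\sequence{a_i}{i\in\omega}$ had a supremum, it would equal a finite sub-join $b_n\weiequiv a_{n-1}$ and so be strictly below $a_n$, which is absurd since $a_n$ lies below the supremum.

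\emph{The routing function.} Suppose towards a contradiction that $c=\sup_n b_n$, and fix representatives $B_n$ of $b_n$ and $C$ of $c$. The problem $\bigsqcup_n B_n$, with domain $\bigcup_n\{n\}\times\dom(B_n)$, is an upper bound of the chain, so $c\weireducible\bigsqcup_n B_n$ via some computable $\Phi,\Psi$; for $p\in\dom(C)$ write $\Phi(p)=\coding{\iota(p),x_p}$, so that $\iota\colon\dom(C)\to\mathbb{N}$ is continuous and every $q\in B_{\iota(p)}(x_p)$ satisfies $\Psi(p,q)\in C(p)$. For each $n$, restricting $\Phi,\Psi$ to $C_n:=C\restrict{\{p\in\dom(C)\st\iota(p)\le n\}}$ witnesses $\deg(C_n)\weireducible\bigsqcup_{i\le n}B_i\weiequiv b_n$. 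If $\iota$ were bounded, say by $N$, then $C_N=C$ and hence $c\weireducible b_N$, contradicting $b_N\strictlyweireducible b_{N+1}\weireducible c$; so $\iota$ is unbounded. Consequently $\deg(C_n)\weireducible b_n\strictlyweireducible c$ for every $n$, the sequence $\sequence{\deg(C_n)}{n}$ is non-decreasing, and $\bigsqcup_n C_n\weiequiv C$ (to reduce $C$, route $p$ to coordinate $\iota(p)$, legitimate as $p\in\dom(C_{\iota(p)})$; to reduce $\bigsqcup_n C_n$, relay any query to $C$). In other words, $c$ is the canonical join of a countable non-decreasing family $\sequence{\deg(C_n)}{n}$ of degrees each lying strictly below it.

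\emph{The crux.} It remains to contradict the existence of such a $c$, and this is the step I expect to be the real obstacle. Since the supremum property is precisely the non-existence of a strictly smaller upper bound, the contradiction cannot be read off from it directly; instead I would construct, by a finite-extension (forcing-style) argument, an upper bound $v$ of $\family{b_n}{n}$ with $c\not\weireducible v$. One builds a representative $V$ of $v$ in stages. For the positive requirements $b_n\weireducible v$, reserve pairwise disjoint ``coordinate regions'' of $\dom(V)$ and, during the construction, copy each $B_n$ into one of them. For the negative requirements, enumerate all pairs $\sequence{(\Phi^k,\Psi^k)}{k}$ of computable functionals and, at stage $k$, extend the finite part of $V$ built so far so as to defeat $(\Phi^k,\Psi^k)$ as a candidate witness of $c\weireducible v$ --- locating $p\in\dom(C)$ and a finite condition forcing either $\Phi^k(p)\notin\dom(V)$ or some $q\in V(\Phi^k(p))$ with $\Psi^k(p,q)\notin C(p)$. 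The delicate interplay --- and the main difficulty --- is to place the copies of the $B_n$ sparsely, always in fresh regions, so that no negative action ever needs to spoil an already-committed copy, while still guaranteeing that each $B_n$ ultimately sits inside $V$ with its full Weihrauch degree; granting this, $v$ is an upper bound of $\family{b_n}{n}$ with $c\not\weireducible v$, contradicting that $c$ is the supremum, and the proof is complete.
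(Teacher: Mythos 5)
The paper does not prove this statement at all --- it is imported directly from Higuchi and Pauly --- so there is no in-house argument to compare yours against; your proposal has to stand on its own. Your reduction to a strictly ascending chain $b_0\strictlyweireducible b_1\strictlyweireducible\cdots$ and the derivation of the final sentence from the main claim are both correct, and the ``routing function'' paragraph is also correct as far as it goes; note, however, that the decomposition $c\weiequiv\bigsqcup_n\deg(C_n)$ you extract there is never used in what follows, so that entire paragraph is a detour.

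The genuine gap is in the crux, and you flag it yourself: the argument ends with ``granting this''. Concretely, what is missing is the justification that each negative requirement can be met, i.e.\ that at stage $k$ there really exists $p\in\dom(C)$ defeating $(\Phi^k,\Psi^k)$ relative to the finite committed part $V_k$ of $V$. This is exactly where the strictness of the chain must enter, and your sketch never invokes it; as written, the same construction would ``refute'' the existence of a supremum even when $c\weiequiv b_N$ for some $N$, a case in which the supremum does exist. The missing observation is that $V_k$ is a finite coproduct of copies of various $B_{n_j}$, hence $V_k\weiequiv B_N$ for $N=\max_j n_j$, and $C\not\weireducible B_N$ because $B_{N+1}\weireducible C$ and the chain is strict; therefore $(\Phi^k,\Psi^k)$ cannot witness $C\weireducible V_k$, which produces either some $p$ with $\Phi^k(p)$ undefined or landing outside $\dom(V_k)$ (reserve that point and never put it into $\dom(V)$), or some $p$ and $q\in V_k(\Phi^k(p))$ with $\Psi^k(p,q)\notin C(p)$ (a permanent defeat, since later stages only add fresh domain elements and never alter $V$ on its existing domain). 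Once this is inserted, the ``delicate interplay'' you worry about dissolves --- each new copy of $B_n$ goes into a fresh cylinder avoiding the finitely many reserved points, so positive and negative requirements never conflict --- and the proof closes. As it stands, though, the decisive step is asserted rather than proved.
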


The dual result for infima does not hold: the infimum of a countable family
does not always exist, but there are chains $\sequence{a_i}{i\in\omega^*}$
with a greatest lower bound (\cite[Example 3.19]{HiguchiPauly13}).

When studying the order-theoretic properties of a reducibility lattice, it is natural to discuss its density. Given two degrees $\mathbf{a} < \mathbf{b}$, we say that \textdef{$\mathbf{b}$ is a minimal cover of $\mathbf{a}$} if there is no degree $\mathbf{c}$ such that $\mathbf{a}<\mathbf{c}<\mathbf{b}$. In other words, $\mathbf{b}$ is a minimal cover of $\mathbf{a}$ if the interval $(\mathbf{a},\mathbf{b})$ is empty. We say that \textdef{$\mathbf{b}$ is a strong minimal cover of $\mathbf{a}$} if, for every $\mathbf{c}<\mathbf{b}$, $\mathbf{c}\le \mathbf{a}$. In other words, the difference between the lower cones of $\mathbf{b}$ and $\mathbf{a}$ is $\{\mathbf{b}\}$. 

Recently \cite{LMPMVMinimalCovers}, the existence and distribution of
(strong) minimal covers in the Weihrauch degrees has been fully
characterized. In particular, it has been shown that the Weihrauch degrees do
not have minimal degrees above $\emptyset$ but are dense (all the
non-degenerate intervals are non-empty) only in the cone above the identity
problem $\id$. Empty intervals exist above every problem $g$ with $\id\not\weireducible g$, while strong minimal covers only exist in the cone below $\id$.

\section{Some results on Medvedev reducibility}

There is a close connection between Weihrauch and Medvedev reducibility, and
some structural results on the Weihrauch lattice can be obtained by analyzing
the properties of the Medvedev degrees. In this section, we briefly recall
the definition and the main properties of Medvedev reducibility. We also
state and prove some results on the Medvedev degrees that, while being
generalizations of already known facts, have not been explicitly observed in
the literature before and are useful to obtain the main results on Weihrauch
reducibility. For a more thorough introduction to Medvedev reducibility, the
reader is referred to \cite{Hinman2012,Sorbi1996}.

Given $A,B\subseteq \Baire$, sometimes referred to as \textdef{mass
problems}, we say that $A$ is \textdef{Medvedev reducible} to $B$, and write
$A\medvedevreducible B$, if there is a computable functional
$\Phi\pfunction{\Baire}{\Baire}$ such that $B \subseteq \dom(\Phi)$ and $\Phi(B)\subseteq A$. The
\textdef{Medvedev degrees} are denoted by $(\MedvedevDeg,
\medvedevreducible)$. They form a distributive lattice with a top element
(the degree of $\emptyset$) and a bottom element (the degree of $\Baire$ or,
equivalently, the degree of any mass problem that contains a computable
point). As for the Weihrauch degrees, we use $(\MedZero, \medvedevreducible)$
for the restriction of the Medvedev degrees to non-empty problems. Since
$\emptyset$ is not the immediate successor of any mass problem, there are no
maximal elements in $\MedZero$. The join and the meet are induced from the
following two operations, respectively:
\begin{itemize}
	\item $A \vee B:= \{ \pairing{p,q} \st p\in A \text{ and } q\in B \} $;
	\item $A \wedge B:= \str{0}\concat A \cup \str{1}\concat B$,
\end{itemize}
where $\pairing{\cdot,\cdot}$ denotes the standard pairing function in the
Baire space and $\str{0}\concat A$ denotes the set obtained by concatenating
the string $\str{0}$ with all the strings in $A$.

Observe that we can restate the definition of Weihrauch reducibility as
follows: $f\weireducible g$ iff there are two computable functionals
$\Phi,\Psi$ such that $\Phi(\dom(f))\subseteq \dom(g)$ and, for every
$p\in\dom(f)$, $\Psi(p,g(\Phi(p))) \subseteq f(p)$. In particular, this shows
that $f\weireducible g$ implies $\dom(g)\medvedevreducible \dom(f)$. This
suggests that the relation between the domains of two multi-valued functions
can be used to define an order-reversing embedding of the Medvedev degrees in the Weihrauch
degrees\footnote{An order-preserving embedding of the Medvedev degrees in the Weihrauch
degrees can be defined by exploiting the range of the computational problems.
For more details, see \cite[Section 5]{BP16}. Infinitely many embeddings can
be defined using the fact that the Weihrauch degrees admit a non-trivial injective
endomorphism \cite[Thm.\ 3.7]{ALMMVJump}.}. Indeed, it is straightforward to
show that $A \medvedevreducible B$ iff $\id_B \weireducible \id_A$, where
$\id_X$ is the restriction of the identity problem $\id$ to the set
$X\subseteq \Baire$.

\begin{remark}
    We highlight that the mapping $d:=A\mapsto \id_A$ induces an isomorphism between $\MedvedevOp=(\MedvedevDeg,\medvedevge)$ and the lower cone of $\id$ in the Weihrauch degrees. This is an important fact that will be frequently used in the rest of the paper.
\end{remark}

The existence and distribution of minimal covers in the Medvedev degrees have
been fully characterized. To describe them, let us define
$\medvedevsucc{p}:=\{\str{e}\concat q \st \Phi_e(q)=p \text{ and }
q\not\turingreducible p \}$.

\begin{theorem}[{\cite[Cor.\ 2.5]{Dyment1976}}]
    For every $A\strictlymedvedevreducible B$, $B$ is a minimal cover of $A$ iff
    \[	(\exists p\in A)(A \medvedevequiv B \wedge \{p\} \text{ and } B
    \wedge \medvedevsucc{p} \medvedevequiv B ).\]
\end{theorem}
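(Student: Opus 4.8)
The plan is to prove the two implications separately; the ``if'' direction is a clean lattice argument, and the ``only if'' direction reduces to one genuinely new point.

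\textit{The ``if'' direction.} Suppose $p\in A$ satisfies $A\medvedevequiv B\wedge\{p\}$ and $B\wedge\medvedevsucc{p}\medvedevequiv B$. Let $C$ be any mass problem with $A\medvedevreducible C\medvedevreducible B$; I would show $C\medvedevequiv A$ or $C\medvedevequiv B$. If some element of $C$ is Turing-below $p$ (equivalently $C\medvedevreducible\{p\}$), then since also $C\medvedevreducible B$ and $B\wedge\{p\}$ is the infimum of $B$ and $\{p\}$ we get $C\medvedevreducible B\wedge\{p\}\medvedevequiv A$, so $C\medvedevequiv A$. Otherwise no element of $C$ is Turing-below $p$; fix reductions $\Theta$ witnessing $A\medvedevreducible C$, $\Lambda$ witnessing $B\wedge\{p\}\medvedevreducible A$, and $\Gamma$ witnessing $B\medvedevreducible\medvedevsucc{p}$, and fix an index $\hat e$ for the computable functional $c\mapsto$ ``$\Lambda(\Theta(c))$ with its leading symbol deleted''. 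Then the map sending $c\in C$ to the leading-symbol-deleted value of $\Lambda(\Theta(c))$ when this lies in $\str{0}\concat B$, and to $\Gamma(\str{\hat e}\concat c)$ when $\Lambda(\Theta(c))=\str{1}\concat p$ (in the latter case $\Phi_{\hat e}(c)=p$ and $c\not\turingreducible p$, so $\str{\hat e}\concat c\in\medvedevsucc{p}$), is a computable reduction of $C$ to $B$; hence $C\medvedevequiv B$. Since $A\strictlymedvedevreducible B$, this makes $B$ a minimal cover of $A$.

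\textit{The ``only if'' direction: existence of a suitable point.} Suppose $B$ is a minimal cover of $A$. Call $p\in A$ \textit{good} if no element of $B$ is Turing-reducible to $p$. The first, and in my view \textbf{main, step is to show that a good $p$ exists}. Note $A\medvedevreducible B$ already implies that every element of $B$ computes an element of $A$; so a good $p$ exists iff $B$ lies strictly above $A$ in the Muchnik ordering too, and what must be excluded is that $A$ and $B$ have the same Muchnik degree while $A\strictlymedvedevreducible B$. If they did, minimality would have to be contradicted by exhibiting an intermediate degree $A\strictlymedvedevreducible C\strictlymedvedevreducible B$; but the obvious candidates — meets of $B$ with single Turing degrees, meets of $B$ with cones, or the mass problem of all $\str{e}\concat a$ with $a\in A$ and $\Phi_e(a)\in B$ — all turn out to be $\medvedevequiv A$, $\medvedevequiv B$, or to lie strictly above $B$. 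So this step seems to require a direct construction of $C$, diagonalizing against all potential reductions $A\to C$ and $C\to B$ while maintaining $A\medvedevreducible C\medvedevreducible B$.

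\textit{The ``only if'' direction: finishing with a good $p$.} Granting a good $p$, minimality yields both equations. First, $A\medvedevreducible B\wedge\{p\}\medvedevreducible B$ (the left inequality since $A\medvedevreducible B$ and, as $p\in A$, $A\medvedevreducible\{p\}$), and $B\wedge\{p\}\medvedevequiv B$ would force $B\medvedevreducible\{p\}$, i.e.\ an element of $B$ Turing-below $p$, contradicting goodness; so by minimality $A\medvedevequiv B\wedge\{p\}$. Second, $A\medvedevreducible B\wedge\medvedevsucc{p}\medvedevreducible B$ (the left inequality since $A\medvedevreducible B$ and one recovers $p=\Phi_e(r)\in A$ from any $\str{e}\concat r\in\medvedevsucc{p}$). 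Moreover $B\wedge\medvedevsucc{p}\medvedevequiv A$ fails: as $A\medvedevequiv B\wedge\{p\}$ and $\{p\}\medvedevreducible\medvedevsucc{p}$, it would require a reduction of $B\wedge\{p\}$ to $B\wedge\medvedevsucc{p}$ sending $\str{1}\concat p$ to some element of $\str{0}\concat B\cup\str{1}\concat\medvedevsucc{p}$ computable from $p$, but goodness rules out any image in $\str{0}\concat B$ and no $\str{e}\concat r\in\medvedevsucc{p}$ satisfies $r\turingreducible p$. Hence by minimality $B\wedge\medvedevsucc{p}\medvedevequiv B$, and $p$ witnesses the right-hand side.
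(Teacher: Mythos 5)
The paper does not prove this statement---it is quoted directly from Dyment---so I can only assess your argument on its own merits. Your ``if'' direction is correct and complete: the split on whether $C\medvedevreducible\{p\}$, and in the second case the routing of each $c\in C$ either through the $\str{0}\concat B$ branch of $\Lambda\circ\Theta$ or through $\Gamma$ applied to $\str{\hat e}\concat c\in\medvedevsucc{p}$, is exactly the right use of the two hypotheses. The ``finishing'' half of the converse is also sound: given a good $p$, minimality does force both equations by the arguments you give.

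The gap is the one you flag yourself, and it is not a side issue: the existence of a good $p$ \emph{is} the theorem. Any $p$ witnessing $A\medvedevequiv B\wedge\{p\}$ is automatically good, since if some element of $B$ were Turing-below $p$ then $B\medvedevreducible\{p\}$, hence $B\medvedevreducible B\wedge\{p\}\medvedevequiv A$, contradicting $A\strictlymedvedevreducible B$. So the ``only if'' direction is \emph{equivalent} to producing a good $p$; you have reformulated the problem rather than reduced it. What remains to be proved is that if $A\strictlymedvedevreducible B$ and every element of $A$ computes some element of $B$ (i.e.\ $B$ is Muchnik-reducible to $A$), then some $C$ satisfies $A\strictlymedvedevreducible C\strictlymedvedevreducible B$. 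Your observation that the obvious candidates ($B\wedge\{p\}$, $B\wedge\medvedevsucc{p}$, joins with cones) all collapse to $A$, to $B$, or above $B$ is accurate, and that is precisely why this step requires a genuine diagonal construction of an intermediate mass problem against all pairs of functionals; no such construction appears in your write-up. As it stands, you have established one direction of the stated equivalence and only a conditional form of the other.
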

Besides, for every $p\in \Baire$, $\{p\}$ is a strong minimal cover of
$\medvedevsucc{p}$ in $\MedvedevOp$. In other words, for every $A\subseteq
\Baire$, $\{p\}\strictlymedvedevreducible A$ implies
$\medvedevsucc{p}\medvedevreducible A$. In particular, since the bottom of
$\MedvedevDeg$ is equivalent to $\{p\}$ for every computable $p$, this
implies that there is a first non-bottom degree in $\MedvedevDeg$, namely,
(the degree of) $\medvedevsucc{0^\mathbb{N}}$. Moreover, being a \emph{degree
of solvability} (i.e., being Medvedev-equivalent to a singleton) is
equivalent to being the top of a strong minimal cover in $\MedvedevOp$, which
proves the first-order definability of the Turing degrees in
$(\MedvedevDeg,\medvedevreducible)$ (\cite[Cor.\ 2.1]{Dyment1976}).

It is known that there are only three mass problems that are
Medvedev-comparable with every other mass problem: these are
$\{0^\mathbb{N}\}$, $\medvedevsucc{0^\mathbb{N}}$, and $\emptyset$. In other
words, for every mass problem $A\notin \{ \{0^\mathbb{N}\},
\medvedevsucc{0^\mathbb{N}}, \emptyset\}$, there is $B\subseteq \Baire$ that
is Medvedev-incomparable with $A$ (\cite[Thm.\ 1.1]{Dyment1976}). The
antichains in $\MedvedevDeg$ can have any cardinality up to $2^\continuum$,
the size of the whole lattice \cite[Thm.\ 4.1]{Sorbi1996}. However, maximal
(non-trivial) antichains need not be infinite.

\begin{proposition}
    \thlabel{thm:max_antichain_M}
    For every $\kappa$ with $1 \le \kappa\le \continuum$, there is a maximal antichain in $\MedvedevDeg$ of size $\kappa$.
\end{proposition}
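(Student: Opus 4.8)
We treat $\kappa = 1$ separately from $2 \le \kappa \le \continuum$.

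For $\kappa = 1$, note that a singleton $\{a\}$ is a maximal antichain in $\MedvedevDeg$ precisely when $a$ is $\medvedevreducible$-comparable with every Medvedev degree: otherwise, any $b$ incomparable with $a$ would make $\{a,b\}$ a proper extension. By \cite[Thm.\ 1.1]{Dyment1976}, the only mass problems comparable with every other one are $\{0^\mathbb{N}\}$, $\medvedevsucc{0^\mathbb{N}}$, and $\emptyset$, so each of the three corresponding degrees is a maximal antichain of size $1$. The same remark shows that, for $\kappa \ge 2$, no maximal antichain of size $\kappa$ can contain any of these three degrees; hence each of its members is a non-empty mass problem with no computable member and different (as a degree) from $\medvedevsucc{0^\mathbb{N}}$.

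For $2 \le \kappa \le \continuum$, the plan is to build an antichain $\family{A_i}{i < \kappa}$ with the \textdef{covering property}: every mass problem is comparable with some $A_i$ (such an antichain is automatically maximal). The $A_i$ will be obtained from $\kappa$ mutually independent reals $\family{g_i}{i < \kappa}$ (for $\kappa = \continuum$, a perfect set of mutually sufficiently generic reals, which exists in $\ZFC$; for smaller $\kappa$, one takes $\kappa$ of them): each $A_i$ combines $\{g_i\}$ with a fixed ``spread-out'' core mass problem $C$ common to all the $A_i$, arranged so that (i) the mutual genericity of the $g_i$ forces the $A_i$ to be pairwise $\medvedevincomparable$ --- here one adapts the antichain constructions behind \cite[Thm.\ 4.1]{Sorbi1996} together with the analysis of $\medvedevsucc{p}$ from \cite{Dyment1976} --- and (ii) the common core $C$ forces the covering property. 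The heart of (ii) should be a dichotomy on an arbitrary mass problem $B$: either some member of $B$ is ``computationally tame relative to the parameters'' (roughly, it does not genuinely compute any $g_i$), in which case $C$ yields $B \medvedevreducible A_i$ for a suitable $i$; or every member of $B$ is complex enough to uniformly produce an element of some fixed $A_i$, so that $A_i \medvedevreducible B$.

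The main difficulty I expect is establishing the covering property, i.e.\ maximality. Since $\MedvedevDeg$ has $2^\continuum$ elements while the construction has only $\kappa \le \continuum$ stages, maximality cannot be secured by diagonalizing against each potential ``escaping'' mass problem; the argument has to be structural, exploiting that the core $C$ is engineered to make the dichotomy above exhaustive. The finite case looks the most delicate, as having only finitely many parameters leaves the least room to separate the ``tame'' mass problems from the ``complex'' ones; the range $\omega \le \kappa \le \continuum$ is more flexible, since one can let a coordinate of a member of $B$ select the index $i$. A further point requiring care is to check that the $A_i$ are genuinely pairwise incomparable as Medvedev degrees, not just that the parameters $g_i$ are, which relies on the $g_i$ being unrecoverable from the shared core $C$.
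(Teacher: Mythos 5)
Your treatment of $\kappa=1$ is correct and matches the paper's. For $\kappa\ge 2$, however, what you have written is a plan rather than a proof, and the plan has a genuine gap at its central point: the ``covering property'' is never established. You define neither the core $C$ nor the sets $A_i$ concretely, and the dichotomy that is supposed to make every mass problem $B$ comparable with some $A_i$ is only described as something that ``should'' hold; you yourself flag it as the main difficulty and leave it unresolved (the finite case, which you single out as the most delicate, is exactly where a vague ``tame versus complex'' split is least likely to be exhaustive). Since maximality is the entire content of the proposition beyond the antichain constructions already in the literature, the argument is incomplete as it stands.

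The idea you are missing is much simpler than mutual genericity. Take a non-maximal Turing antichain $\family{p_\alpha}{\alpha<\sup\kappa}$ (with $\sup\kappa=\kappa-1$ for finite $\kappa$ and $\sup\kappa=\kappa$ otherwise), let $A_\alpha:=\{p_\alpha\}$ for $\alpha<\sup\kappa$, and add the single ``catch-all'' mass problem $A_{\sup\kappa}:=\{q\in\Baire \st (\forall\alpha<\sup\kappa)(q\not\turingreducible p_\alpha)\}$. The point is that $C\medvedevreducible\{p_\alpha\}$ holds iff some member of $C$ is Turing-reducible to $p_\alpha$; so if $C$ is not reducible to any of the singletons $A_\alpha$, then every member of $C$ avoids every lower Turing cone of the $p_\alpha$, i.e.\ $C\subseteq A_{\sup\kappa}$, whence $A_{\sup\kappa}\medvedevreducible C$ via the identity. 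This makes maximality automatic and structural, exactly as you correctly anticipated it had to be (no diagonalization over $2^\continuum$ mass problems is possible), but without any dichotomy on the computational complexity of the members of $B$: comparability with the catch-all set is forced by set inclusion. Pairwise incomparability reduces to the $p_\alpha$ forming a Turing antichain, plus the non-maximality of that antichain in the Turing degrees, which supplies a witness $q\turingincomparable p_\alpha$ for all $\alpha$; such a $q$ lies in $A_{\sup\kappa}$ and shows $A_\alpha\not\medvedevreducible A_{\sup\kappa}$.
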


In the following, we combine the cases of a finite and an infinite
cardinal~$\kappa$ into one, so for convenience, we will use $\sup\kappa$ to
denote~$\kappa-1$ for finite $\kappa>0$, and~$\kappa$ itself for
infinite~$\kappa$.

\begin{proof}
    The case $\kappa=1$ was discussed just before the statement. For $\kappa>1$, we generalize the technique used in \cite[Example 4.2]{Sorbi1996}: let $\family{p_\alpha}{\alpha<\sup \kappa}$ be a non-maximal antichain in the Turing degrees. We define:
    \begin{itemize}
        \item for every $\alpha<\sup \kappa$, $A_{\alpha}:=\{p_\alpha \}$;
        \item $A_{\sup \kappa}:=\{ q \in \Baire \st (\forall \alpha<\sup \kappa)(q\not\turingreducible p_\alpha)\}$.
    \end{itemize}
    It is easy to check that the sequence $\sequence{A_\alpha}{\alpha\le \sup \kappa}$ is an antichain in $\MedvedevDeg$ of size $\kappa$. Indeed, for every $\alpha< \beta < \sup\kappa$, $A_\alpha \medvedevincomparable A_\beta$ follows trivially from the fact that $\family{p_\alpha}{\alpha<\sup \kappa}$ is a Turing-antichain. By definition, $A_{\sup\kappa} \not \medvedevreducible A_{\alpha}$, hence we only need to show that $A_{\alpha}\not\medvedevreducible A_{\sup\kappa}$. Since $\family{p_\alpha}{\alpha<\sup \kappa}$ is not maximal, there is $q$ such that, for every $\alpha<\sup\kappa$, $q \turingincomparable p_\alpha$. Clearly any such $q$ belongs to $A_{\sup\kappa}$, hence $A_{\alpha}\not\medvedevreducible A_{\sup\kappa}$.

    To show that the antichain is maximal, fix $C\subseteq \Baire$ and assume that $(\forall \alpha < \sup \kappa)(C \not\medvedevreducible A_\alpha)$. 
    In particular, for every $q\in C$ and every $\alpha<\sup \kappa$, $q\not\turingreducible p_\alpha$. 
    This implies that $C\subseteq A_{\sup \kappa}$, which in turn implies $A_{\sup \kappa} \medvedevreducible C$.
\end{proof}

\subsection{Chains in the Medvedev degrees}

We now give an overview of some results on the existence of ``long'' chains
in the Medvedev degrees. In \cite{Terwijn08}, it was observed that the
existence of a chain of size $\kappa$ in $(2^\continuum, \subseteq)$ implies
the existence of a chain of the same size in $\MedvedevDeg$ (see the proof of
\cite[Thm.\ 4.2]{Terwijn08}). The converse direction was established in
\cite[Thm.\ 4.3.1]{Shafer2011a}.
Upon closer inspection, the proof of this equivalence can be adapted to
obtain a slightly stronger theorem. We first highlight the following simple
fact:

\begin{proposition}
    \thlabel{thm:chain_upper_bound_M0}
    Let $L$ be a linear order with $\cofinality(L)>\omega$. A chain $\sequence{A_x}{x\in L}$ in $\MedZero$ has an upper bound iff there is a cofinal subsequence $\sequence{A_{x_\alpha}}{\alpha<\cofinality(L)}$ such that $\bigcap_{\alpha<\cofinality(L)} A_{x_\alpha} \neq \emptyset$.
\end{proposition}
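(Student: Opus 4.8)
The plan is to prove both directions directly from the definition of Medvedev reducibility, exploiting the fact that $\cofinality(L)>\omega$ means no $\omega$-indexed subsequence can be cofinal.

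\medskip

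\noindent\textbf{The ``if'' direction.}
Suppose $\sequence{A_{x_\alpha}}{\alpha<\cofinality(L)}$ is cofinal and $\bigcap_{\alpha} A_{x_\alpha}\neq\emptyset$; pick $p$ in this intersection. I claim $\{p\}$ is an upper bound for the whole chain. Indeed, for any $x\in L$ there is $\alpha$ with $x\le x_\alpha$ (by cofinality), so $A_{x_\alpha}\medvedevge A_x$ (recall that in $\MedZero$ ``$A_x$ is below $A_{x_\alpha}$ in the chain'' means $A_x\medvedevreducible A_{x_\alpha}$, i.e.\ the chain is increasing), and since $p\in A_{x_\alpha}$ we have $\{p\}\medvedevge A_{x_\alpha}$ via the constant functional; composing, $\{p\}\medvedevge A_x$. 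Hence $\{p\}$ is an upper bound.

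\medskip

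\noindent\textbf{The ``only if'' direction.}
This is the substantive half. Suppose $B$ is an upper bound for $\sequence{A_x}{x\in L}$, so for each $x$ there is a computable functional $\Phi_x$ with $\Phi_x(B)\subseteq A_x$. Fix any $q\in B$. For each $x$, $\Phi_x(q)$ is a single point of $\Baire$ lying in $A_x$. The key observation is that the value $\Phi_x(q)$ is computed using only a \emph{finite} initial segment of $q$ as oracle — more precisely, by continuity of $\Phi_x$, every bit of $\Phi_x(q)$ is determined by a finite portion of $q$; but to get a \emph{uniform} finiteness we should instead argue at the level of indices: there are only countably many computable functionals, so the map $x\mapsto \Phi_x$ (choosing for each $x$ some witnessing functional) takes only countably many values. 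Let $\{\Psi_n\st n\in\omega\}$ enumerate the functionals actually used, and for each $n$ let $L_n=\{x\in L\st \Psi_n \text{ witnesses } B\medvedevge A_x\}$, so $L=\bigcup_n L_n$. Since $\cofinality(L)>\omega$, some $L_n$ must be cofinal in $L$ (if every $L_n$ were bounded, choosing an upper bound for each would give an $\omega$-sized cofinal set, contradiction). Fix such an $n$ and set $p:=\Psi_n(q)$. For every $x\in L_n$ we have $p=\Psi_n(q)\in A_x$, so $p\in\bigcap_{x\in L_n}A_x$; in particular this intersection is nonempty. Now extract from the cofinal set $L_n$ a cofinal subsequence $\sequence{x_\alpha}{\alpha<\cofinality(L)}$; then $\sequence{A_{x_\alpha}}{\alpha<\cofinality(L)}$ is a cofinal subsequence of the original chain with $p\in\bigcap_\alpha A_{x_\alpha}\neq\emptyset$, as required.

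\medskip

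\noindent The main obstacle is the ``only if'' direction, and specifically the step that turns a single upper bound $B$ into a single \emph{point} lying in cofinally many of the $A_x$: the trick is that a countable family of functionals reduces $B$ to all the $A_x$, so by the uncountable cofinality of $L$ one functional $\Psi_n$ works on a cofinal sub-chain, and then $\Psi_n$ applied to any fixed $q\in B$ is a common point. One should double-check that $B$ can indeed be taken nonempty (it is, since the chain lies in $\MedZero$ and an upper bound of nonempty problems in $\MedZero$ is nonempty — or simply replace $B$ by any nonempty problem above it), and that the passage from a cofinal subset to an order-isomorphic cofinal subsequence of length exactly $\cofinality(L)$ is routine.
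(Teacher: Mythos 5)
Your proposal is correct and follows essentially the same route as the paper: the forward direction is the same pigeonhole argument over the countably many computable functionals (using $\cofinality(L)>\omega$ to get one functional working on a cofinal set), and the backward direction differs only in the inessential detail that you use the singleton $\{p\}$ as the upper bound where the paper uses the intersection $C$ itself. The minor points you flag (nonemptiness of $B$, extracting an order-isomorphic subsequence of length $\cofinality(L)$) are indeed routine and handled implicitly in the paper.
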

\begin{proof}
    Assume first that $B\neq\emptyset$ is an upper bound for $\sequence{A_x}{x\in L}$. Then, there is $e\in\mathbb{N}$ and a cofinal subsequence $\sequence{A_{x_\alpha}}{\alpha<\cofinality(L)}$ such that for every $\alpha<\cofinality(L)$, $A_{x_\alpha}\medvedevreducible B$ via $\Phi_e$. In particular, this implies that
    \[ \emptyset \neq \Phi_e(B)\subseteq \bigcap_{\alpha<\cofinality(L) } A_{x_\alpha}. \]
    Conversely, assume that there is a cofinal subsequence $\sequence{A_{x_\alpha}}{\alpha<\cofinality(L)}$ such that $C:=\bigcap_{\alpha<\cofinality(L) } A_{x_\alpha}\neq \emptyset$. Then, for every $\alpha<\cofinality(L)$, $A_{x_\alpha}\medvedevreducible C$, hence $C$ is an upper bound for $\sequence{A_x}{x\in L}$.
\end{proof}

\begin{theorem}[essentially {\cite[Thm.\ 4.3.1]{Shafer2011a}}]
    \thlabel{thm:long_chains_medvedev}
    For any linear order $L$, the following are equivalent:
    \begin{enumerate}
        \item there is a chain in $(2^\continuum, \supseteq)$ of order type $L$;
        \item there is a chain in $\MedvedevDeg$ of order type $L$;
        \item above every non-top Medvedev degree, there is a chain in $\MedvedevDeg$ of order type $L$.
    \end{enumerate}
    Moreover, if $\cofinality(L)>\omega$, then the existence of a chain of order type $L$ in $(2^\continuum, \supseteq)$ implies the existence of an order-isomorphic chain in $\MedvedevDeg$ with no (non-trivial) upper bound.
\end{theorem}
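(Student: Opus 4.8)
The plan is to establish the cycle $(1)\Rightarrow(3)\Rightarrow(2)\Rightarrow(1)$ and then the ``Moreover'' clause. Here $(3)\Rightarrow(2)$ is immediate, since it suffices to apply (3) to the bottom Medvedev degree $[\Baire]$, which is not the top; and $(2)\Rightarrow(1)$ is exactly \cite[Thm.\ 4.3.1]{Shafer2011a}, which I would invoke without reproof. So the content lies in $(1)\Rightarrow(3)$ and the last claim, and both rest on a single construction, a relativized form of the argument behind \cite[Thm.\ 4.2]{Terwijn08}: fix an antichain $\family{\mathbf{d}_i}{i<\continuum}$ of pairwise Turing-incomparable degrees (such antichains are classical), and for $X\subseteq\continuum$ put $B_X:=\bigcup_{i\in X}\{q\in\Baire\st\mathbf{d}_i\turingreducible q\}$, the union of the cones above the $\mathbf{d}_i$ with $i\in X$. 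If $X\supseteq Y$ then $B_Y\subseteq B_X$, so $B_X\medvedevreducible B_Y$ via the identity; and if moreover $i\in X\setminus Y$, then no functional can witness $B_Y\medvedevreducible B_X$, because applying it to a member of $\mathbf{d}_i$ (which lies in $B_X$) produces a point of degree $\turingreducible\mathbf{d}_i$, which to belong to $B_Y$ must be Turing-above some $\mathbf{d}_j$ with $j\in Y$, forcing $\mathbf{d}_j\turingreducible\mathbf{d}_i$ and hence $j=i$ — absurd. Thus $X\mapsto[B_X]$ is an order-embedding of $(2^\continuum,\supseteq)$ into $\MedvedevDeg$, and applying it to a chain of order type $L$ in $(2^\continuum,\supseteq)$ already reproves $(1)\Rightarrow(2)$.

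\emph{$(1)\Rightarrow(3)$.} Let $D\neq\emptyset$ and fix $d\in D$; since $D\medvedevreducible\{d\}$, it suffices to build a chain of order type $L$ above $[\{d\}]$. I would rerun the construction with an antichain \emph{relative to $d$}: pairwise Turing-incomparable $\family{\mathbf{d}_i}{i<\continuum}$ with $d\strictlyturingreducible\mathbf{d}_i$ for all $i$ (for instance the degrees of $d\oplus g_i$ for mutually $d$-generic reals $g_i$), and then set $M_x:=\{d\}\vee B_{X_x}$ for the given chain $\sequence{X_x}{x\in L}$. Projecting onto the first coordinate gives $\{d\}\medvedevreducible M_x$, so the chain lies above $[\{d\}]\medvedevge[D]$; monotonicity of $\vee$ gives $M_x\medvedevreducible M_{x'}$ whenever $x\le_L x'$; and for $x<_L x'$, feeding a hypothetical reduction $M_{x'}\medvedevreducible M_x$ a pair $\pairing{d,r_i}$ with $r_i$ of degree $\mathbf{d}_i$ and $i\in X_x\setminus X_{x'}$ yields, using $d\oplus r_i\turingequiv r_i$, the same index clash as before, so $M_{x'}\not\medvedevreducible M_x$. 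Hence $\sequence{M_x}{x\in L}$ is a chain of order type $L$ above $[D]$.

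\emph{``Moreover''.} Assume $\cofinality(L)>\omega$; then $L$ has no maximum, so every term of a chain $\sequence{X_x}{x\in L}$ in $(2^\continuum,\supseteq)$ is nonempty, and replacing each $X_x$ by $X_x\setminus\bigcap_{y\in L}X_y$ — which preserves the order type and the strictness of the chain and keeps the terms nonempty — we may assume $\bigcap_{x\in L}X_x=\emptyset$. Take $M_x:=B_{X_x}$ for the unrelativized antichain; by the above this is a chain of order type $L$ in $\MedZero$, and I claim it has no nontrivial upper bound. By \thref{thm:chain_upper_bound_M0}, such a bound would provide a cofinal subsequence $\sequence{M_{x_\alpha}}{\alpha<\cofinality(L)}$ with a common point $p$; for each $\alpha$ pick $i_\alpha\in X_{x_\alpha}$ with $\mathbf{d}_{i_\alpha}\turingreducible p$. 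Since $p$ is Turing-above only countably many degrees while $\cofinality(L)$ is regular and uncountable, some fixed $i^{*}$ equals $i_\alpha$ for cofinally many $\alpha$; as the $X_x$ are $\supseteq$-decreasing and the $x_\alpha$ cofinal in $L$, this places $i^{*}$ in $\bigcap_{x\in L}X_x=\emptyset$ — a contradiction.

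\emph{Main obstacle.} The one genuinely delicate point in the new material is this last step: the cone construction does in general admit upper bounds, precisely when $\bigcap_x X_x\neq\emptyset$ (a cone above a common index is then a bound), so one must first excise the common part of the chain, and the uncountable-cofinality pigeonhole is exactly what then rules out any surviving bound — this is the only place $\cofinality(L)>\omega$ is used. The deeper fact, that \emph{every} chain of $(2^\continuum,\subseteq)$ is realized in $\MedvedevDeg$ (i.e.\ $(2)\Rightarrow(1)$), I would not attempt to reprove: the obvious candidate invariant — sending a mass problem to the set of reals computing one of its members — is order-preserving but collapses distinct Medvedev degrees, so the more careful coding of \cite{Shafer2011a} is genuinely needed there.
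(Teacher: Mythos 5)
Your proposal is correct, and it follows the paper's overall strategy: delegate $(2)\Rightarrow(1)$ to Shafer, prove $(1)\Rightarrow(3)$ by coding a chain of subsets of $\continuum$ into mass problems built from a $\continuum$-sized Turing antichain lying above the given degree, and derive the ``Moreover'' clause from \thref{thm:chain_upper_bound_M0} after excising $\bigcap_{x\in L}\mathcal{C}_x$. The concrete coding differs, though, in a way worth noting. The paper takes $A_x:=\{p_\alpha \st \alpha\in\mathcal{C}_x\}$, where the $p_\alpha$ represent the minimal Turing degrees above a fixed $p\in A$; since these mass problems are sets of antichain representatives, the map $\mathcal{C}\mapsto A_{\mathcal{C}}$ preserves intersections, so $\bigcap_{x\in L}A_x=\emptyset$ holds literally and \thref{thm:chain_upper_bound_M0} applies with no further work. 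Your coding by unions of upper cones $B_X$ does \emph{not} preserve intersections — a single real can compute $\mathbf{d}_{i_\alpha}$ for varying $i_\alpha\in X_{x_\alpha}$ even when $\bigcap_\alpha X_{x_\alpha}=\emptyset$ — which is exactly why you need the additional countability-plus-regularity pigeonhole to extract a single index lying in $\bigcap_{x}X_x$; you correctly identified and discharged this as the delicate step. On the other hand, your join with $\{d\}$ makes the reduction of $\{d\}$ to each $M_x$ uniform, playing the role that a careful choice of representatives (e.g.\ of the form $p\oplus q$) plays in the paper's construction, so your version is, if anything, slightly more self-contained on that point. Both arguments use $\cofinality(L)>\omega$ only in the final unboundedness step, as they must.
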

\begin{proof}
    The implication $(3)\Rightarrow (2)$ is trivial, while $(2)\Rightarrow (1)$ can be readily obtained by inspecting the proof of \cite[Thm.\ 4.3.1]{Shafer2011a}. For $(1)\Rightarrow (3)$, let $\sequence{\mathcal{C}_x}{x\in L}$ be a chain in $(2^\continuum, \supseteq)$. Without loss of generality, we can assume that $\bigcap_{x\in L} \mathcal{C}_x = \emptyset$ (otherwise we can just replace $\mathcal{C}_x$ with $\mathcal{C}_x\setminus \bigcap_{y\in L} \mathcal{C}_y$). Fix a non-empty $A\subseteq \Baire$ and let $p\in A$. Let also $\family{p_\alpha}{\alpha<\continuum}$ be the set of $\turingreducible$-minimal degrees above $p$. Clearly, for every $\alpha$, $A \strictlymedvedevreducible \{p_\alpha\}$. For $x \in L$, let $A_x:= \{ p_\alpha \st \alpha\in\mathcal{C}_x\}$. As in the proof of \cite[Thm.\ 4.3.1]{Shafer2011a}, the family $\{A_x \st x\in L\}$ is a chain in $\MedvedevDeg$ of order type $L$.

    Observe also that $\bigcap_{x\in L} A_x = \emptyset$. If $\cofinality(L)>\omega$ then $\sequence{A_x}{x\in L}$ has no (non-trivial) upper bound. Indeed, for every cofinal subsequence $\sequence{A_{x_\alpha}}{\alpha<\cofinality(L)}$ of $\sequence{A_x}{x\in L}$, $\bigcap_{\alpha<\cofinality(L) } A_{x_\alpha} = \bigcap_{x\in L} A_{x} = \emptyset$, hence the claim follows from \thref{thm:chain_upper_bound_M0}.
\end{proof}

\begin{corollary}
    \thlabel{thm:chain_w1_no_upper_bound_med0}
    There is a chain in $\MedZero$ of order type $\omega_1$ with no upper bound. \qed
\end{corollary}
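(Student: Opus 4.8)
The plan is to read the statement off the ``Moreover'' clause of \thref{thm:long_chains_medvedev}, instantiated at $L=\omega_1$. The only ingredient that needs to be supplied by hand is a chain of order type $\omega_1$ in $(2^\continuum,\supseteq)$, and this is immediate from the fact that $\omega_1\le\continuum$.

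First I would fix an identification of $\omega_1$ with a subset of a set of size $\continuum$, and for each $\alpha<\omega_1$ set $\mathcal{C}_\alpha:=\{\beta\st\alpha\le\beta<\omega_1\}=\omega_1\setminus\alpha$. For $\alpha<\alpha'<\omega_1$ one has $\mathcal{C}_\alpha\supsetneq\mathcal{C}_{\alpha'}$, with the ordinal $\alpha$ itself witnessing that the inclusion is strict; hence $\alpha\mapsto\mathcal{C}_\alpha$ is an order isomorphism from $\omega_1$ onto a chain $\family{\mathcal{C}_\alpha}{\alpha<\omega_1}$ in $(2^\continuum,\supseteq)$.

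Since $\cofinality(\omega_1)=\omega_1>\omega$, the last sentence of \thref{thm:long_chains_medvedev} then yields a chain $\sequence{A_\alpha}{\alpha<\omega_1}$ in $\MedvedevDeg$, order-isomorphic to $\omega_1$, with no non-trivial upper bound. To conclude, I would note that no $A_\alpha$ can be $\emptyset$: as $\emptyset$ is the top element of $\MedvedevDeg$, it would otherwise be an upper bound for the entire chain. Therefore $\sequence{A_\alpha}{\alpha<\omega_1}$ is a chain in $\MedZero$ of order type $\omega_1$ with no upper bound. There is no genuine obstacle in this argument; the only routine verifications are the strictness of the $\mathcal{C}_\alpha$-chain and that the resulting Medvedev chain avoids the top degree.
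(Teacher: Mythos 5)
Your proof is correct and takes essentially the same route as the paper: the corollary is stated there as an immediate consequence of the ``Moreover'' clause of \thref{thm:long_chains_medvedev} applied to $L=\omega_1$, exactly as you do, with the $\omega_1$-chain in $(2^\continuum,\supseteq)$ being the obvious one (the paper then merely adds the explicit example $\sequence{\{d_\alpha\}}{\alpha<\omega_1}$ for a Turing $\omega_1$-chain). One minor point: since the theorem only excludes \emph{non-trivial} upper bounds, the presence of $\emptyset$ in the chain would not by itself contradict its conclusion, so to see that the chain avoids the top degree it is cleaner to note that a strictly increasing $\omega_1$-chain cannot contain the top of $\MedvedevDeg$ because $\omega_1$ has no maximum (and in any case your $\mathcal{C}_\alpha$ are all non-empty, so the mass problems $A_\alpha$ built in the proof of that theorem are non-empty).
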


An explicit example of such sequence can be built as follows: let
$\sequence{d_\alpha}{\alpha<\omega_1}$ be a $\turingreducible$-chain of order
type $\omega_1$. Clearly, this sequence does not have an upper bound, as
lower Turing cones are countable. This also shows that the sequence of mass
problems $\sequence{\{d_\alpha\}}{\alpha<\omega_1}$ has no upper bound in
$\MedZero$, as there are only countably many singletons in the lower Medvedev
cone of every (non-empty) mass problem.

%

We highlight that we cannot fully characterize (in $\ZFC$) the cardinals
$\kappa$ for which there is a chain of size $\kappa$ in $\MedvedevDeg$.

\begin{theorem}[{\cite[Cor.\ 4.3.2]{Shafer2011a}}]
    The existence of a chain of cardinality $2^\continuum$ in the Medvedev degrees is independent of $\ZFC$. \qed
\end{theorem}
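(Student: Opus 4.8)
The plan is to move the problem, via \thref{thm:long_chains_medvedev}, to a purely combinatorial question about $(\mathcal P(\continuum),\subseteq)$ and then invoke the relevant independence from cardinal arithmetic. Since \thref{thm:long_chains_medvedev} gives, for every linear order $L$, a chain of order type $L$ in $\MedvedevDeg$ exactly when there is one in $(2^\continuum,\supseteq)$, and since complementation is an anti-isomorphism of $(2^\continuum,\subseteq)$ while a chain of cardinality $\lambda$ is just a chain of order type $L$ for some $L$ with $|L|=\lambda$, it suffices to show that ``$(\mathcal P(\continuum),\subseteq)$ has a chain of cardinality $2^\continuum$'' is independent of $\ZFC$. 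I would first record the reformulation: this holds if and only if some linear order of cardinality $\le\continuum$ has at least $2^\continuum$ Dedekind cuts. Indeed, from such an order $D$, sending each cut to its lower part embeds the (naturally linearly ordered) set of cuts of $D$ into $\mathcal P(D)\cong\mathcal P(\continuum)$; conversely, given a chain $C\subseteq\mathcal P(\continuum)$, every $x\in\continuum$ splits $C$ into $\{A\in C:x\notin A\}$ and $\{A\in C:x\in A\}$, so it determines a cut of the linear order $C$, and this family of at most $\continuum$ cuts separates the points of $C$; hence $C$ embeds into the Dedekind completion of a dense linear order of size $\le\continuum$ (and in particular $|C|\le 2^\continuum$).

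For the consistency of a positive answer I would assume $\CH$ --- or, more generally, $2^{<\continuum}=\continuum$, which also follows from Martin's Axiom. Then $|2^{<\continuum}|=2^{<\continuum}=\continuum$, so the lexicographically ordered complete binary tree $(2^{<\continuum},<_{\mathrm{lex}})$ of height $\continuum$ is a linear order of cardinality $\continuum$, and its branches give $2^\continuum$ pairwise distinct Dedekind cuts: if $b<_{\mathrm{lex}}b'$ are branches whose first disagreement is at coordinate $\alpha$, then the initial segment of $b'$ of length $\alpha+1$ is lexicographically strictly between $b$ and $b'$ (using the convention that a proper initial segment precedes its extensions), so it lies in exactly one of the two cuts. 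By the reformulation, $\mathcal P(\continuum)$, hence $\MedvedevDeg$, then has a chain of cardinality $2^\continuum$.

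The consistency of a negative answer is the hard part, and it is exactly what \cite[Cor.\ 4.3.2]{Shafer2011a} provides: one needs a model of $\ZFC$ in which no linear order of size $\le\continuum$ has $2^\continuum$ Dedekind cuts, i.e.\ in which $\operatorname{ded}(\continuum)<2^\continuum$. Here $\CH$ must fail (by the previous paragraph), and in fact $2^\continuum\ge\continuum^{++}$, since the $\ZFC$ inequality $\operatorname{ded}(\kappa)\ge\kappa^+$ forces $\operatorname{ded}(\continuum)=2^\continuum$ whenever $2^\continuum=\continuum^+$. So one wants a model in which $2^\continuum$ has been blown up well past $\continuum^+$ --- necessarily by a $\continuum$-closed forcing, to keep $\continuum$ where it is --- without ever creating a single order of size $\continuum$ carrying that many cuts (morally: without creating a ``Kurepa-type'' family of $2^\continuum$ cuts on a fixed such order). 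Arranging and verifying this, which rests on the known consistency of $\operatorname{ded}(\kappa)<2^\kappa$ for uncountable $\kappa$, is where I expect the main difficulty to lie.
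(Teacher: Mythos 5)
The paper does not actually prove this statement: it is quoted verbatim from Shafer's thesis with a \verb|\qed|, and the only related material in the paper is the reduction \thref{thm:long_chains_medvedev} together with the remark at the start of Section~\ref{sec:chains} that $2^{<\continuum}=\continuum$ yields $2^\continuum$-sized chains in $(2^\continuum,\subseteq)$ and hence in $\MedvedevDeg$ (Terwijn's Prop.~3.2). Your reduction is exactly the one the paper licenses, and your positive half is complete and correct: the passage from chains of cardinality $\lambda$ in $\mathcal P(\continuum)$ to linear orders of size $\le\continuum$ with $\lambda$ cuts is the standard separating-family argument, and the lexicographic-tree construction under $2^{<\continuum}=\continuum$ is precisely Terwijn's. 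So for the consistency of a positive answer your write-up is fine and matches the paper's (implicit) route.

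The gap, which you correctly flag, is the consistency of the negative answer, and you should be aware that it is not quite a black-box application of ``$\operatorname{ded}(\kappa)<2^\kappa$ is consistent for uncountable $\kappa$.'' What is needed is a model in which $\operatorname{ded}(\continuum)<2^\continuum$ \emph{for the continuum of that model}; Mitchell's theorem as usually stated concerns $\operatorname{ded}(\aleph_1)$, and (as your own CH argument shows) any such model must satisfy $\lnot\CH$ and indeed $2^{<\continuum}>\continuum$, so one has to check that the forcing construction can be arranged with the relevant cardinal equal to the continuum. That verification is the entire content of the cited \cite[Cor.\ 4.3.2]{Shafer2011a}, so as a self-contained proof your proposal is incomplete at exactly the point where the theorem is hard; as a reduction of the Medvedev statement to that known set-theoretic consistency result, it is sound. (One further small point: $\operatorname{ded}$ is defined as a supremum that need not be attained, whereas a chain of cardinality $2^\continuum$ requires a single order of size $\le\continuum$ realizing $2^\continuum$ cuts; this costs nothing here since $2^\continuum$ has uncountable cofinality and one can sum countably many witnesses, but it deserves a sentence.)
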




\smallskip

A natural problem is to characterize the cofinality of the Medvedev degrees.
The question is only interesting when considering $\MedZero$, as there is a
top element in $\MedvedevDeg$. We first recall the following well-known fact
about the Turing degrees.

\begin{theorem}[{\cite[Ex.\ V.2.4(c)]{Odifreddi92}}]
    \thlabel{thm:cof_chains_turing}
    The following are equivalent:
    \begin{itemize}
        \item $\CH$;
        \item there is a cofinal chain in $\TuringDeg$;
        \item there is a cofinal chain in $\TuringDeg$ of order type $\omega_1$. \qed
    \end{itemize}
\end{theorem}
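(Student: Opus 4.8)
The plan is to establish the cycle $(1)\Rightarrow(3)\Rightarrow(2)\Rightarrow(1)$, the implication $(3)\Rightarrow(2)$ being trivial. Throughout I would rely on three standard facts about $\TuringDeg$: (a) every countable family of Turing degrees has an upper bound, obtained by taking the effective join $\bigoplus_n p_n$ of representatives $p_n$; (b) the Turing jump is strictly increasing, so $\TuringDeg$ has no maximum and every degree has one strictly above it; and (c) every Turing degree has only countably many predecessors, from which $|\TuringDeg|=\continuum$ follows, since $\Cantor$ is then a union of $|\TuringDeg|$-many countable equivalence classes.

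For $(1)\Rightarrow(3)$, assume $\CH$, so that $\TuringDeg$ can be enumerated as $\{d_\alpha\st\alpha<\omega_1\}$. I would build a strictly increasing sequence $\sequence{c_\alpha}{\alpha<\omega_1}$ by transfinite recursion: at every stage $\alpha<\omega_1$ the set $\{c_\beta\st\beta<\alpha\}\cup\{d_\alpha\}$ is countable, so by~(a) it has an upper bound $u$, and by~(b) we may pick $c_\alpha$ strictly above $u$ (for instance $c_\alpha=u'$). Then $c_\beta<c_\alpha$ for all $\beta<\alpha$ and $d_\alpha\turingreducible c_\alpha$, so $\sequence{c_\alpha}{\alpha<\omega_1}$ is a strictly increasing chain of order type $\omega_1$, and it is cofinal since every degree equals some $d_\alpha\turingreducible c_\alpha$.

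For $(2)\Rightarrow(1)$, let $C$ be a cofinal chain. By~(b) the poset $\TuringDeg$ has no maximum, hence neither does the linear order $C$, so $\cofinality(C)$ is a regular cardinal $\kappa$ witnessed by a strictly increasing cofinal sequence $\sequence{d_\alpha}{\alpha<\kappa}$ inside $C$, which is still cofinal in $\TuringDeg$. For each $\alpha<\kappa$, the set $\{d_\beta\st\beta<\alpha\}$ consists of $|\alpha|$-many predecessors of $d_\alpha$, so by~(c) $|\alpha|\le\aleph_0$; hence $\kappa\le\omega_1$. On the other hand, cofinality of $\sequence{d_\alpha}{\alpha<\kappa}$ gives $\TuringDeg=\bigcup_{\alpha<\kappa}\{e\st e\turingreducible d_\alpha\}$, a union of $\kappa$-many countable sets, so $\continuum=|\TuringDeg|\le\kappa\cdot\aleph_0\le\aleph_1$, i.e.\ $\CH$ holds.

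The only step that needs a little care is $(2)\Rightarrow(1)$: a cofinal chain need not be well-ordered, so one must first pass to a strictly increasing cofinal subsequence of length $\cofinality(C)$ before the ``countably many predecessors'' bound can be applied; after that, everything is a routine cardinal arithmetic computation, and I do not expect any genuine obstacle, this being a classical fact. (The same argument in fact shows $\setcofinality(\TuringDeg)=\continuum$ in $\ZFC$.)
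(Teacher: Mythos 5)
Your proof is correct; note that the paper itself gives no argument for this theorem, citing it as a known exercise from Odifreddi, and your write-up is precisely the standard argument behind that citation (countable joins plus the jump for $(1)\Rightarrow(3)$, and the ``countably many predecessors'' count applied to a well-ordered cofinal subsequence for $(2)\Rightarrow(1)$). The one delicate point --- that an arbitrary cofinal chain must first be refined to a strictly increasing well-ordered cofinal subsequence of regular length --- is exactly the one you flag and handle, so there is no gap.
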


Recall also that $\omega_1$-chains in the Turing degrees can be built in
$\ZFC$. However, under $\ZFC+\lnot \CH$, no such chain can be cofinal. We now
show that the analogue of \thref{thm:cof_chains_turing} holds for $\MedZero$.
To this end, we first prove the following lemma.


\begin{lemma}
    \thlabel{thm:cofinal_chains_M0_w1}
    If $\sequence{A_\alpha}{\alpha<\kappa}$ is a cofinal chain in $\MedZero$, then $\cofinality(\kappa)=\omega_1$.
\end{lemma}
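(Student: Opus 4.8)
The plan is to pass to a cofinal subchain of regular length $\lambda:=\cofinality(\kappa)$ and then squeeze $\lambda$ between $\omega$ and $\omega_1$. Note first that $\kappa$ is a limit ordinal, since a greatest element of the chain would be a maximum of $\MedZero$, and $\MedZero$ has none; so $\lambda$ is a well-defined regular cardinal. Fixing a cofinal subset of $\kappa$ of order type $\lambda$ and relabelling, I obtain a strictly increasing cofinal chain $\sequence{B_\xi}{\xi<\lambda}$ in $\MedZero$, and it suffices to show $\lambda=\omega_1$.

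The basic tool is that the singletons are cofinal in $\MedZero$: if $\{p\}\medvedevreducible C$ via $\Phi$, then $\Phi(C)=\{p\}\subseteq D$ for every $D$ containing $p$, so $D\medvedevreducible C$. Hence cofinality of the chain amounts to the statement that every $p\in\Baire$ satisfies $\{p\}\medvedevreducible B_\xi$ for some $\xi$. Since $S_\xi:=\{p\st\{p\}\medvedevreducible B_\xi\}$ is contained in the (countable) lower Turing cone of any fixed member of $B_\xi$, each $S_\xi$ is countable, so $\Baire=\bigcup_{\xi<\lambda}S_\xi$ being uncountable already forces $\lambda>\omega$.

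The crucial ingredient is that initial segments of our chain admit \emph{singleton} upper bounds. If $\delta<\lambda$ has $\cofinality(\delta)=\omega_1$, then $B_\delta$ is an upper bound in $\MedZero$ of $\sequence{B_\alpha}{\alpha<\delta}$, so by \thref{thm:chain_upper_bound_M0} some cofinal subsequence $\sequence{B_{\alpha_j}}{j<\omega_1}$ has $\bigcap_{j<\omega_1}B_{\alpha_j}\neq\emptyset$; any $r$ in this intersection makes $\{r\}$ an upper bound of $\sequence{B_\alpha}{\alpha<\delta}$, since each $B_\alpha$ reduces to some $B_{\alpha_j}$, which reduces to $\{r\}$ via the identity (because $r\in B_{\alpha_j}$). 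Assuming toward a contradiction that $\lambda\geq\omega_2$, I would use this repeatedly to build, by recursion on $\xi<\lambda$, a strictly increasing sequence $\sequence{\gamma_\xi}{\xi<\lambda}$ of ordinals below $\lambda$, each of cofinality $\omega_1$: start with $\gamma_0=\omega_1$ and take $\gamma_\xi=\bigl(\sup_{\zeta<\xi}\gamma_\zeta\bigr)+\omega_1$ at limits; once $\gamma_\xi$ is defined, the previous paragraph supplies a real $r_\xi$ with $\{r_\xi\}$ an upper bound of $\sequence{B_\alpha}{\alpha<\gamma_\xi}$, cofinality of the chain supplies $\eta_\xi<\lambda$ with $\{r_\xi\}\medvedevreducible B_{\eta_\xi}$ (necessarily $\eta_\xi\geq\gamma_\xi$, as $\eta_\xi<\gamma_\xi$ would give $B_{\eta_\xi}\medvedevequiv\{r_\xi\}$ and hence $B_{\eta_\xi+1}\medvedevreducible B_{\eta_\xi}$), and we put $\gamma_{\xi+1}=\eta_\xi+\omega_1$. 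Regularity of $\lambda$ (with $\lambda\geq\omega_2$) keeps all these ordinals below $\lambda$, so the $\gamma_\xi$ are cofinal in $\lambda$.

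It then remains to read off a cofinal chain in $\TuringDeg$. For $\xi<\xi'$ we have $\eta_\xi<\gamma_{\xi+1}\le\gamma_{\xi'}$, so $\{r_\xi\}\medvedevreducible B_{\eta_\xi}\medvedevreducible\{r_{\xi'}\}$, i.e.\ $r_\xi\turingreducible r_{\xi'}$, and strictly, since $r_{\xi'}\turingreducible r_\xi$ would give $B_{\eta_\xi+1}\medvedevreducible\{r_{\xi'}\}\medvedevreducible\{r_\xi\}\medvedevreducible B_{\eta_\xi}$; moreover $\sequence{r_\xi}{\xi<\lambda}$ is cofinal in $\TuringDeg$, because any $p$ has $\{p\}\medvedevreducible B_\beta$ for some $\beta<\lambda$, which lies below some $\gamma_\xi$, whence $p\turingreducible r_\xi$. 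Thus $\TuringDeg$ has a cofinal chain of order type $\lambda$, which by \thref{thm:cof_chains_turing} forces $\CH$; but then $\lambda\le|\TuringDeg|=\continuum=\omega_1$, contradicting $\lambda\geq\omega_2$. Hence $\lambda\le\omega_1$, and with $\lambda>\omega$ this gives $\cofinality(\kappa)=\lambda=\omega_1$. I expect the recursion in the third paragraph to be the main obstacle: the content is that a cofinal chain in $\MedZero$ secretly carries a cofinal chain in $\TuringDeg$, and the bookkeeping has to keep the singleton upper bounds $\{r_\xi\}$ of longer and longer initial segments pairwise $\turingreducible$-comparable.
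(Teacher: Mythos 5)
Your proof is correct, but it takes a genuinely different and heavier route than the paper's. The paper disposes of the case $\cofinality(\kappa)>\omega_1$ with a two-line counting argument: fix a $\turingreducible$-chain $\sequence{d_\beta}{\beta<\omega_1}$ (available in $\ZFC$); cofinality of the Medvedev chain gives indices $\alpha_\beta$ with $\{d_\beta\}\medvedevreducible A_{\alpha_\beta}$, and if $\cofinality(\kappa)>\omega_1$ these indices are bounded by some $\eta<\kappa$, so $\omega_1$ many pairwise distinct singletons reduce to $A_{\eta+1}$ --- contradicting the fact that only countably many singletons lie below any fixed non-empty mass problem (the same fact both of you use to rule out cofinality $\omega$). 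You instead run a transfinite recursion that extracts from the Medvedev chain a strictly increasing cofinal Turing chain of order type $\lambda=\cofinality(\kappa)$, using \thref{thm:chain_upper_bound_M0} to produce singleton upper bounds of initial segments of cofinality $\omega_1$, and then invoke \thref{thm:cof_chains_turing} to deduce $\CH$ and obtain a cardinality contradiction with $\lambda\ge\omega_2$. All the steps check out (the bookkeeping keeping the $\gamma_\xi$ below $\lambda$, the strictness of the $r_\xi$, and their cofinality in $\TuringDeg$ are each verified correctly), but you route through $\CH$ where no set-theoretic case analysis is needed. What your argument buys is essentially a cleaner proof of the $(2)\Rightarrow(1)$ direction of \thref{thm:cof_chain_med0}: your singleton-upper-bound trick shows directly that a cofinal chain in $\MedZero$ of regular length $\lambda$ carries a cofinal Turing chain of the same length, which the paper later establishes by amalgamating chosen representatives along fundamental sequences. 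So you prove more than the lemma asks, at the cost of considerable extra machinery for the statement at hand.
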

\begin{proof}
    Recall first of all that for every mass problem $A$, there are $\omega$ many singletons that are Medvedev-reducible to $A$ (as there are only countably many computable functionals). Assume that $\sequence{A_\alpha}{\alpha<\kappa}$ is a cofinal sequence in $\MedZero$ and that $\cofinality(\kappa)>\omega_1$. Let $\sequence{d_\beta}{\beta<\omega_1}$ be a $\turingreducible$-chain. For every $\beta<\omega_1$, there is $\alpha_\beta < \kappa$ such that $\{d_\beta\}\medvedevreducible A_{\alpha_\beta}$. Since $\cofinality(\kappa)>\omega_1$, $\eta:=\sup\{ \alpha_\beta \st \beta < \omega_1\}<\kappa$, and therefore, for every $\beta<\omega_1$, $\{d_\beta\}\medvedevreducible A_{\eta + 1}$, contradicting the fact that there can be only countably many singletons in the lower cone of $A_{\eta + 1}$.

    To conclude the proof, notice that no countable family (and, in particular, no countable chain) can be cofinal in $\MedZero$ (the union of their lower cones does not contain all the singletons). If $\sequence{A_\alpha}{\alpha<\kappa}$ is a cofinal chain and $\cofinality(\kappa)=\omega$, then there is a countable cofinal chain in $\MedZero$, which is a contradiction.
\end{proof}

\begin{theorem}
    \thlabel{thm:cof_chain_med0}
    The following are equivalent:
    \begin{enumerate}
        \item $\CH$;
        \item there is a cofinal chain in $\MedZero$;
        \item there is a cofinal chain in $\MedZero$ of order type
        $\omega_1$.
    \end{enumerate}
\end{theorem}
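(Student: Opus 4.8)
The plan is to prove the cycle of implications $(1)\Rightarrow(3)\Rightarrow(2)\Rightarrow(1)$. The implication $(3)\Rightarrow(2)$ is trivial. For $(2)\Rightarrow(1)$, suppose there is a cofinal chain in $\MedZero$; by \thref{thm:cofinal_chains_M0_w1} it has order type of cofinality $\omega_1$, so we may thin it to a cofinal chain $\sequence{A_\alpha}{\alpha<\omega_1}$. We now want to extract a cofinal chain in the Turing degrees, which by \thref{thm:cof_chains_turing} gives $\CH$. Since each singleton lies below some $A_\alpha$ and there are only countably many computable functionals, every singleton is below some $A_\alpha$ via one of countably many reductions. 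The key point is that the union of the lower Turing cones hit by the $A_\alpha$'s must be all of the Turing degrees: given any Turing degree $d$, the singleton $\{d\}$ is Medvedev-reducible to some $A_\alpha$, hence (applying a computable functional to a point of $A_\alpha$) there is a point $q\in A_\alpha$ with $d\turingreducible q$. So if for each $\alpha$ we pick any $q_\alpha\in A_\alpha$, the lower Turing cones of the $q_\alpha$ would cover all degrees only if the $q_\alpha$ form a cofinal set—but a single $q_\alpha$ need not dominate all of $A_\alpha$. The fix is to use the join: by \thref{thm:chain_upper_bound_M0}-style reasoning, or more directly, observe that cofinality of $\MedZero$ forces, via the argument in \thref{thm:cofinal_chains_M0_w1}, that no strictly shorter chain works, and that we can replace each $A_\alpha$ by a suitably chosen singleton. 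Concretely: build a cofinal $\turingreducible$-chain $\sequence{d_\alpha}{\alpha<\omega_1}$ by recursion, at stage $\alpha$ choosing $d_\alpha$ Turing-above a point of $A_{\beta(\alpha)}$ where $\beta(\alpha)$ is chosen large enough that $A_{\beta(\alpha)}$ is not Medvedev-below $\{d_\gamma\}$ for $\gamma<\alpha$—cofinality of the original chain guarantees such $\beta(\alpha)$ exists and that the resulting $\sequence{d_\alpha}{\alpha<\omega_1}$ is cofinal in $\TuringDeg$.

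For the main implication $(1)\Rightarrow(3)$, assume $\CH$. By \thref{thm:cof_chains_turing} there is a cofinal chain $\sequence{d_\alpha}{\alpha<\omega_1}$ in the Turing degrees; we may assume it is strictly increasing. The plan is to push this up to $\MedZero$ via the singleton embedding $d\mapsto\{d\}$. This map is order-preserving from $\TuringDeg$ into $\MedZero$ (indeed $\{p\}\medvedevreducible\{q\}$ iff $p\turingreducible q$), so $\sequence{\{d_\alpha\}}{\alpha<\omega_1}$ is a chain of order type $\omega_1$ in $\MedZero$. It remains to check cofinality: given any non-empty mass problem $B\subseteq\Baire$, pick any $q\in B$; by cofinality of the Turing chain there is $\alpha$ with $q\turingreducible d_\alpha$, and then the constant map $p\mapsto q$ witnesses $B\medvedevreducible\{d_\alpha\}$ (wait—we need $\{d_\alpha\}\medvedevge B$, i.e.\ $B\medvedevreducible\{d_\alpha\}$, which means a computable functional sending $d_\alpha$ into $B$; since $q\turingreducible d_\alpha$ and $q\in B$, such a functional exists). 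Hence $\sequence{\{d_\alpha\}}{\alpha<\omega_1}$ is cofinal in $\MedZero$ of order type $\omega_1$, giving $(3)$.

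The main obstacle is the direction $(2)\Rightarrow(1)$: a cofinal chain in $\MedZero$ is a priori made of genuine mass problems (not singletons), and cofinality with respect to $\medvedevreducible$ only tells us that each $A_\alpha$ lies \emph{below} later ones, not that the $A_\alpha$'s "track" a Turing-cofinal set of points. The right way to overcome this is the recursive construction sketched above, where at each countable stage we have already handled only countably many singletons, so by \thref{thm:cofinal_chains_M0_w1}-type counting there is always a member of the cofinal Medvedev chain not yet captured, from which we can harvest a new Turing degree strictly above all previous ones; the cofinality of the Medvedev chain then transfers to cofinality of the constructed Turing chain. Everything else—order-preservation of the singleton embedding, the triviality of $(3)\Rightarrow(2)$, and the use of \thref{thm:cof_chains_turing}—is routine.
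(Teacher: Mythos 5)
Your proof is correct and follows the paper's route almost exactly: the same cycle $(1)\Rightarrow(3)\Rightarrow(2)\Rightarrow(1)$, the same appeal to \thref{thm:cof_chains_turing} and \thref{thm:cofinal_chains_M0_w1}, and the identical singleton construction for $(1)\Rightarrow(3)$. The one point where you genuinely diverge is the final step of $(2)\Rightarrow(1)$: after extracting from the $A_\alpha$'s a family of points whose Turing degrees are cofinal in $\TuringDeg$, one must convert this cofinal \emph{set} into a cofinal \emph{chain}. The paper does this non-recursively, fixing a fundamental sequence $\sequence{\alpha[n]}{n\in\mathbb{N}}$ for each countable $\alpha$ and setting $q_\alpha:=\bigl(\bigoplus_{n} q_{\alpha[n]}\bigr)\oplus p_\alpha$, so that $\alpha<\beta$ forces $q_\alpha\turingreducible q_\beta$ after finitely many descents along fundamental sequences. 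You instead run a transfinite recursion, taking $d_\alpha$ above all earlier $d_\gamma$ and above a point of $A_{\beta(\alpha)}$ with $A_{\beta(\alpha)}\not\medvedevreducible\{d_\gamma\}$ for $\gamma<\alpha$; this works, and the cofinality claim you leave implicit can be justified by noting that once $d_\gamma$ is chosen above a point of $A_{\beta(\gamma)}$ we get $A_{\beta(\gamma)}\medvedevreducible\{d_\gamma\}$, so your side condition forces $\beta$ to be strictly increasing and hence cofinal in $\omega_1$, while existence of $\beta(\alpha)$ at each stage follows because no countable family is cofinal in $\MedZero$. Both devices are fine; the paper's is shorter and purely combinatorial on the index set, yours re-uses the counting argument of \thref{thm:cofinal_chains_M0_w1} at every stage. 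One small correction to your discussion of the ``obstacle'': if $\{d\}\medvedevreducible A_\alpha$, the witnessing functional must map \emph{every} element of $A_\alpha$ to $d$, so $d\turingreducible q$ for every $q\in A_\alpha$; hence an \emph{arbitrary} choice of $p_\alpha\in A_\alpha$ already yields a Turing-cofinal set (this is exactly what the paper observes). The only real issue is that these points need not form a chain, which your recursion (and the paper's join trick) both resolve.
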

\begin{proof}
    Let us first show that $(1)\Rightarrow (3)$. By \thref{thm:cof_chains_turing}, there is a cofinal chain $\sequence{d_\alpha}{\alpha<\omega_1}$ in the Turing degrees (identifying a Turing degree with one of its representatives). Define the sequence $\sequence{A_\alpha}{\alpha<\omega_1}$ as $A_\alpha := \{ d_\alpha \}$. Clearly, for every $\alpha<\beta<\omega_1$, $A_\alpha \strictlymedvedevreducible A_\beta$. To prove that $\sequence{A_\alpha}{\alpha<\omega_1}$ is cofinal in $\MedZero$, let $C\subseteq\Baire$ be non-empty and let $p\in C$. Since the sequence $\sequence{d_\alpha}{\alpha<\omega_1}$ is cofinal in $\TuringDeg$, there is $\alpha<\omega_1$ such that $p\turingreducible d_\alpha$, which implies that $C\medvedevreducible A_\alpha$.

    The implication $(3)\Rightarrow (2)$ is trivial, hence we only need to show that $(2)\Rightarrow (1)$. Let $\sequence{B_\alpha}{\alpha<\kappa}$ be a cofinal chain in $\MedZero$. By \thref{thm:cofinal_chains_M0_w1}, we can assume that $\kappa=\omega_1$. For each $\alpha$, choose $p_\alpha \in B_\alpha$. Observe that the set $\{p_\alpha \st \alpha < \omega_1\}$ is cofinal in $\TuringDeg$. Indeed, for every $d\in \Cantor$, there is $\alpha < \omega_1$ such that $\{ d \} \medvedevreducible B_{\alpha}$, hence in particular $d \turingreducible p_\alpha$. We now exploit the set $\{p_\alpha \st \alpha < \omega_1\}$ to define a cofinal sequence $\sequence{q_\alpha}{\alpha<\omega_1}$ in the Turing degrees. By \thref{thm:cof_chains_turing}, this suffices to conclude the proof.

    For every $\alpha$ such that $0<\alpha<\omega_1$, we fix a fundamental sequence for $\alpha$, i.e., a sequence $\sequence{\alpha[n]}{n\in\mathbb{N}}$ such that
    \begin{itemize}
        \item $(\forall n\in\mathbb{N})(\alpha[n]\le \alpha[n+1]< \alpha)$;
        and
        \item $\alpha = \sup \{ \alpha[n]+1 \st n\in\mathbb{N} \}$.
    \end{itemize}
    We then define $q_0:=p_0$ and, for every $\alpha>0$, $q_\alpha := \left( \bigoplus_{n\in\mathbb{N}} q_{\alpha[n]} \right)\oplus p_\alpha$.

    Observe that, if $\alpha < \beta$, then there are $n_0,\hdots, n_k$ such that $\alpha = \beta[n_0][n_1]\hdots [n_k]$, which implies that $q_\alpha \turingreducible q_\beta$. Moreover, for every $\alpha$, $p_\alpha \turingreducible q_\alpha$, hence the sequence $\sequence{q_\alpha}{\alpha<\omega_1}$ is cofinal in $\TuringDeg$, and this concludes the proof.
\end{proof}


\begin{theorem}
    \thlabel{thm:cofinality_m0}
    $\setcofinality(\MedZero) = \continuum$.
\end{theorem}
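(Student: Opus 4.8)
The plan is to prove two inequalities. For $\setcofinality(\MedZero) \le \continuum$, I would exhibit a cofinal subset of $\MedZero$ of size $\continuum$. The natural candidate is the set of singletons $\{\{p\} \st p \in \Baire\}$: every non-empty mass problem $A$ has some $p \in A$, and then $A \medvedevreducible \{p\}$. This set has size $\continuum$, giving the upper bound immediately. (One could even use the countable-to-one-fewer class of singletons of noncomputable reals, but $\continuum$ is all that is needed.)

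For the reverse inequality $\setcofinality(\MedZero) \ge \continuum$, I would argue that no family of fewer than $\continuum$ mass problems can be cofinal in $\MedZero$. The key observation, already used repeatedly above (e.g.\ in \thref{thm:cofinal_chains_M0_w1}), is that for any single mass problem $B$, there are only \emph{countably many} singletons $\{p\}$ with $\{p\} \medvedevreducible B$, since such a reduction is witnessed by one of the countably many computable functionals. Now suppose $\{B_i \st i \in I\}$ is cofinal in $\MedZero$ with $\length{I} = \lambda < \continuum$. For each $p \in \Baire$ there is $i \in I$ with $\{p\} \medvedevreducible B_i$; hence $\Baire = \bigcup_{i\in I} \{ p \st \{p\} \medvedevreducible B_i \}$, exhibiting $\Baire$ as a union of $\lambda$ countable sets. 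Since $\lambda < \continuum$ and $\lambda \cdot \omega < \continuum$ for every infinite $\lambda < \continuum$ (and trivially for finite $\lambda$), this is a contradiction: $\continuum = \length{\Baire} \le \lambda \cdot \omega < \continuum$. Therefore $\setcofinality(\MedZero) \ge \continuum$.

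The two bounds together give $\setcofinality(\MedZero) = \continuum$. The only mildly delicate point is the cardinal arithmetic fact that $\lambda \cdot \omega < \continuum$ whenever $\lambda < \continuum$; this holds in $\ZFC$ since $\lambda \cdot \omega = \max(\lambda, \omega) < \continuum$ for infinite $\lambda$, and is trivial otherwise. No case analysis on $\CH$ is needed here, in contrast to the results on \emph{chain}-cofinality above, because we are allowed an arbitrary (not linearly ordered) cofinal set. I expect the write-up to be short, with essentially no obstacle beyond making the ``countably many singletons below a fixed mass problem'' lemma explicit and invoking it for each element of the hypothetical small cofinal family.
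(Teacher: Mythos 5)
Your proposal is correct and follows essentially the same route as the paper: singletons give the upper bound, and the ``only countably many singletons below any mass problem'' observation plus cardinal arithmetic gives the lower bound. The only cosmetic difference is that the paper needlessly splits into the $\CH$ and $\lnot\CH$ cases, whereas your uniform counting argument handles both at once.
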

\begin{proof}
    Under $\CH$, this is a corollary of \thref{thm:cof_chain_med0} (as no countable family can be cofinal), so assume $\lnot \CH$. Notice first of all that the family of singletons is cofinal in $\MedZero$, hence $\setcofinality(\MedZero)\le \continuum$. Let $\family{A_\alpha}{\alpha<\kappa}$ be a cofinal family of non-empty mass problems with $\kappa \le \continuum$. Let $\varphi$ be the function that maps $p\in \Baire$ to the least $\alpha<\kappa$ such that $\{p\} \medvedevreducible A_{\alpha}$. The cofinality of $\family{A_\alpha}{\alpha<\kappa}$ implies that
    \[ \Baire = \bigcup_{\alpha<\kappa} \varphi^{-1}(\alpha). \]
    Since every non-empty mass problem has countably many singletons in its lower cone, for every $\alpha<\kappa$ the set $\varphi^{-1}(\alpha)$ is countable. This implies $\kappa = \continuum$ because
    \[ \continuum = |\Baire| = \left| \bigcup_{\alpha<\kappa} \varphi^{-1}(\alpha) \right| \le \kappa \cdot \omega = \kappa \le \continuum. \qedhere \]
\end{proof}

\section{Chains in the Weihrauch degrees}
\label{sec:chains}

We now turn our attention to the study of chains in the Weihrauch degrees. As
mentioned, some structural properties of the Weihrauch degrees are immediate
consequences of the results obtained for the Medvedev degrees. For example,
it is observed in \cite[Prop.\ 4.2]{Terwijn08} that under $\ZFC +
2^{<\continuum} = \continuum$, there are chains of size $2^\continuum$ in
$\MedvedevDeg$, and hence in $\WeiDeg$. This follows from
\thref{thm:long_chains_medvedev}, as $2^{<\continuum} = \continuum$ implies
that there are chains of size $2^\continuum$ in $(2^\continuum, \subseteq)$
(\cite[Prop.\ 3.2]{Terwijn08}).

While the following observation is not an immediate consequence of what
happens in the Medvedev degrees, its proof can be obtained by adapting the
technique used in \thref{thm:chain_w1_no_upper_bound_med0}.
\begin{proposition}
    \thlabel{thm:chain-no-upper-bound}
    There is a chain in $\WeiDeg$ of order type $\omega_1$ with no upper bound.
\end{proposition}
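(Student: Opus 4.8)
The plan is to mimic the explicit example given right after \thref{thm:chain_w1_no_upper_bound_med0}, passing from singleton mass problems to \emph{constant} Weihrauch problems. Fix a $\turingreducible$-chain $\sequence{d_\alpha}{\alpha<\omega_1}$ of order type $\omega_1$ (such a chain exists in $\ZFC$). For each $\alpha<\omega_1$, let $f_\alpha\pfunction{\Baire}{\Baire}$ be the problem with $\dom(f_\alpha)=\{0^\mathbb{N}\}$ and $f_\alpha(0^\mathbb{N})=\{d_\alpha\}$; equivalently, $f_\alpha$ is the constant problem with value $d_\alpha$, i.e.\ the image of $\{d_\alpha\}$ under the order-preserving embedding of $\MedvedevDeg$ into $\WeiDeg$ of \cite[Section 5]{BP16}. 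Unwinding the definition of $\weireducible$ for problems whose domain is $\{0^\mathbb{N}\}$, one checks immediately that $f_\alpha\weireducible f_\beta$ iff $\{d_\alpha\}\medvedevreducible\{d_\beta\}$ iff $d_\alpha\turingreducible d_\beta$. Hence $\alpha\le\beta$ implies $f_\alpha\weireducible f_\beta$, whereas $\alpha<\beta$ gives $d_\beta\not\turingreducible d_\alpha$ and so $f_\beta\not\weireducible f_\alpha$. In particular distinct indices give distinct degrees, so $\sequence{f_\alpha}{\alpha<\omega_1}$ is a chain of order type exactly $\omega_1$.

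It remains to prove that this chain has no upper bound. Suppose, towards a contradiction, that $g\pfunction{\Baire}{\Baire}$ satisfies $f_\alpha\weireducible g$ for every $\alpha<\omega_1$. Since there are only countably many pairs of computable functionals, there is an uncountable set $S\subseteq\omega_1$ and a single pair $(\Phi,\Psi)$ of computable functionals witnessing $f_\alpha\weireducible g$ for every $\alpha\in S$. Crucially, the input handed to $\Phi$ is always $0^\mathbb{N}$, so $p:=\Phi(0^\mathbb{N})$ is a single element of $\dom(g)$, independent of $\alpha$; thus $g(p)\neq\emptyset$ and we may fix some $q\in g(p)$. The reduction now forces $\Psi(0^\mathbb{N},q)\in f_\alpha(0^\mathbb{N})=\{d_\alpha\}$, i.e.\ $\Psi(0^\mathbb{N},q)=d_\alpha$, simultaneously for every $\alpha\in S$. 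Since $S$ is uncountable and the $d_\alpha$ are pairwise distinct, this is absurd.

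I expect the only genuine subtlety — the ``main obstacle'' — to be the bookkeeping in the pigeonhole step: one must notice that it is not merely the pair $(\Phi,\Psi)$ but also the point $p=\Phi(0^\mathbb{N})$ and a chosen solution $q\in g(p)$ that can be fixed uniformly over the uncountable set $S$, so that the backward functional $\Psi$ would have to output uncountably many distinct reals from a single pair of inputs. Everything else is a routine unwinding of the definition of Weihrauch reducibility, exactly as in the Medvedev case.
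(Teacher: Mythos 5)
Your proof is correct and follows essentially the same route as the paper's: constant problems with value $d_\alpha$ for a Turing chain $\sequence{d_\alpha}{\alpha<\omega_1}$, followed by a pigeonhole argument on the countably many pairs of functionals forcing the backward functional to output two distinct $d_\alpha$'s on the same input. The only cosmetic differences are that the paper takes the $f_\alpha$ to be total on $\Baire$ rather than restricted to $\{0^\mathbb{N}\}$, and that it only needs two indices sharing a witness pair rather than an uncountable set.
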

\begin{proof}
    Let $\sequence{d_\alpha}{\alpha<\omega_1}$ be a $\turingreducible$-chain. For every $\alpha$, define $f_\alpha\function{\Baire}{\Baire}$ as the constant map $p\mapsto d_\alpha$. Clearly, for every $\alpha < \beta$, $f_\alpha \strictlyweireducible f_\beta$. Assume there is $g$ such that for every $\alpha < \omega_1$, $f_\alpha \weireducible g$. Since there are only countably many computable functionals, there are computable functionals $\Phi,\Psi$ and distinct ordinals $\alpha, \beta$ such that the reductions $f_\alpha \weireducible g$ and $f_\beta \weireducible g$ are witnessed by $\Phi,\Psi$. In particular, for every $p\in g(\Phi(0^\mathbb{N}))$ we have $d_\alpha = \Psi(0^\mathbb{N}, p) = d_\beta$, which is a contradiction.
\end{proof}

It is not hard to see that the previous proof can be adapted to show more
than what is claimed. In the following, we will generalize the previous
proposition and provide a characterization of when a chain of multi-valued
functions admits an upper bound. We first observe the following fact:

\begin{proposition}
    Let $\mathcal{F}$ be an uncountable family of multi-valued functions. If
    \[ (\forall f_0, f_1 \in \mathcal{F}, f_0 \neq f_1)(\exists p\in \dom(f_0) \cap \dom(f_1))(f_0(p)\cap f_1(p)=\emptyset), \]
    then the family has no upper bound, i.e., for every $g$ there is $f\in \mathcal{F}$ such that $f \not \weireducible g$.
\end{proposition}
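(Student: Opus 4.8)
The plan is to generalize the counting argument already used in the proof of \thref{thm:chain-no-upper-bound}. Suppose towards a contradiction that some $g$ is an upper bound for $\mathcal{F}$, so that for every $f \in \mathcal{F}$ there are computable functionals $\Phi_f, \Psi_f$ witnessing $f \weireducible g$. Since there are only countably many pairs of computable functionals and $\mathcal{F}$ is uncountable, the pigeonhole principle yields two distinct elements $f_0, f_1 \in \mathcal{F}$ together with a single pair $(\Phi, \Psi)$ that simultaneously witnesses $f_0 \weireducible g$ and $f_1 \weireducible g$.

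Now invoke the hypothesis on the pair $f_0 \neq f_1$: there is $p \in \dom(f_0) \cap \dom(f_1)$ with $f_0(p) \cap f_1(p) = \emptyset$. Since $p \in \dom(f_0)$ and $(\Phi, \Psi)$ witnesses $f_0 \weireducible g$, we have $\Phi(p) \in \dom(g)$, and hence $g(\Phi(p)) \neq \emptyset$; pick any $q \in g(\Phi(p))$. Applying the reduction $f_0 \weireducible g$ via $(\Phi, \Psi)$ gives $\Psi(p,q) \in f_0(p)$, and applying $f_1 \weireducible g$ via the \emph{same} $(\Phi, \Psi)$ — legitimate because $p \in \dom(f_1)$ as well — gives $\Psi(p,q) \in f_1(p)$. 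Thus $\Psi(p,q) \in f_0(p) \cap f_1(p) = \emptyset$, a contradiction.

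Two minor points should be recorded for completeness. Since $\mathcal{F}$ is uncountable (in particular has at least two elements), the hypothesis forces every $f \in \mathcal{F}$ to have non-empty domain: otherwise $\dom(f) \cap \dom(f') = \emptyset$ for any other $f' \in \mathcal{F}$ and the required witness $p$ could not exist. Consequently each element of $\mathcal{F}$ is a non-empty problem, and any upper bound $g$ likewise has non-empty domain, which is precisely what makes the step ``$g(\Phi(p)) \neq \emptyset$'' valid. I do not expect a genuine obstacle in this proof: it is essentially a one-line pigeonhole argument combined with the defining disjointness property of $\mathcal{F}$, so the only thing to be careful about is the bookkeeping of which functional plays which role, exactly as in \thref{thm:chain-no-upper-bound}.
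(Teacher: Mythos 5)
Your proof is correct and follows essentially the same route as the paper's: a pigeonhole argument over the countably many pairs of computable functionals, followed by the observation that a common witness pair $(\Phi,\Psi)$ would force $\Psi(p,q)$ into the empty intersection $f_0(p)\cap f_1(p)$. The additional remarks about non-emptiness of domains are harmless bookkeeping that the paper leaves implicit.
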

\begin{proof}
    Assume towards a contradiction that there is $g$ such that for every $f\in \mathcal{F}$, $f\weireducible g$. Since $|\mathcal{F}|>\omega$, there are distinct $f_0, f_1 \in \mathcal{F}$ and computable functionals $\Phi,\Psi$ such that the reductions $f_0\weireducible g$ and $f_1\weireducible g$ are witnessed by $\Phi,\Psi$. By hypothesis, we can fix $p\in \dom(f_0) \cap \dom(f_1)$ such that $f_0(p)\cap f_1(p)=\emptyset$. Clearly, for any solution $q\in g(\Phi(p))$, $\Psi(p,q)$ cannot belong to both $f_0(p)$ and $f_1(p)$, contradicting the definition of Weihrauch reduction.
\end{proof}

%


We show that a ``long'' chain (a chain whose order type has uncountable
cofinality) admits an upper bound iff, roughly speaking, there is a cofinal
subchain all of whose problems have common solutions for their inputs
(\thref{thm:no_upper_bound}). The proof is obtained by combining the
following two lemmas.

\begin{lemma}
    \thlabel{thm:no_upper_bound_1}
    Let $L$ be a linear order with $\cofinality(L)>\omega$ and let $\sequence{f_x}{x\in L}$ be a chain of partial multi-valued functions (that is order-isomorphic to $L$). If $\sequence{f_x}{x\in L}$ has an upper bound, then there is $E\subseteq L$ cofinal in $L$ such that, letting $I_p^E:=\{y \in E \st p\in\dom(f_y)\}$,
    \[ \left(\forall p\in\bigcup_{z \in E}\dom(f_z)\right)\left( \bigcap_{y \in I_p^E} f_y(p)\neq \emptyset\right). \]
\end{lemma}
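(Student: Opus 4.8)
The plan is a pigeonhole argument: there are only countably many pairs of computable functionals, and this is ``too few'' to serve a family indexed by a partial order of uncountable set-cofinality. First I would fix an upper bound $g$ of $\family{f_x}{x\in P}$. By the reformulation of Weihrauch reducibility recalled above, for each $x\in P$ there is a pair $(\Phi,\Psi)$ of computable functionals with $\Phi(\dom(f_x))\subseteq\dom(g)$ and $\Psi(p,g(\Phi(p)))\subseteq f_x(p)$ for every $p\in\dom(f_x)$; I would choose one such pair $c_x$ for each $x$. Since there are only countably many pairs of computable functionals, the assignment $x\mapsto c_x$ partitions $P$ into countably many classes, and the crucial claim is that (at least) one of them, say the class $E$ with common witnessing pair $(\Phi,\Psi)$, is cofinal in $P$.

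Granting that, the verification that $E$ satisfies the conclusion is routine. Fix $p\in\bigcup_{z\in E}\dom(f_z)$ and pick some $z\in E$ with $p\in\dom(f_z)$; since $f_z\weireducible g$ via $(\Phi,\Psi)$, we get $\Phi(p)\in\dom(g)$, so $g(\Phi(p))\neq\emptyset$ and we may fix $q\in g(\Phi(p))$. For every $y\in I_p^E$ — that is, every $y\in E$ with $p\in\dom(f_y)$ — the reduction $f_y\weireducible g$ is witnessed by the \emph{same} pair $(\Phi,\Psi)$, hence $\Psi(p,q)\in\Psi(p,g(\Phi(p)))\subseteq f_y(p)$. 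Thus $\Psi(p,q)$ lies in $\bigcap_{y\in I_p^E}f_y(p)$, which is therefore non-empty, as required.

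The main obstacle is exactly the step claiming that one of the countably many classes is cofinal. When $\family{f_x}{x\in P}$ is a chain, i.e.\ $P$ is a linear order, this is immediate: a linear order of uncountable cofinality is not a countable union of subsets each bounded above by one of its elements, so some class must be cofinal (this is the mechanism already at work in \thref{thm:chain-no-upper-bound}, and the same works whenever $P$ is directed with regular uncountable cofinality). For an arbitrary partial order the purely order-theoretic version of this statement fails — a countable union of non-cofinal subsets can be cofinal — so one must bring in the particular structure supplied by the upper bound $g$ (for instance, relating the ``threshold'' elements witnessing that a class is not cofinal via $g$, to force a common bound); I expect this to be where the real content of the proof lies, with the rest being the bookkeeping above.
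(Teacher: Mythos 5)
Your proposal is, step by step, the paper's own proof: fix an upper bound $g$, split $P$ into the countably many classes $E_{e,i}:=\{x\in P \st f_x\weireducible g \text{ via }\Phi_e,\Phi_i\}$, argue that one class is cofinal, and then run exactly the verification you give (which is correct). The one difference is that the paper simply asserts ``since $\bigcup_{e,i}E_{e,i}=P$ and $\setcofinality(P)>\omega$, some $E_{e,i}$ is cofinal'' with no further argument, whereas you flag this as the problematic step. Your instinct is right, and in fact no hidden structure supplied by $g$ can rescue it: the lemma as stated is false for non-linear $P$. Take $P$ an antichain of size $\omega_1$ (so $\setcofinality(P)=\omega_1$ and the only cofinal subset of $P$ is $P$ itself). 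Let $r:=\bigoplus_n c_n$ with the $c_n$ pairwise Turing-incomparable and uniformly $r$-computable, let $\family{S_x}{x<\omega_1}$ be an almost disjoint family of infinite subsets of $\mathbb{N}$ with $\bigcap_{x}S_x=\emptyset$, and put $\dom(f_x):=\{0^\mathbb{N}\}$, $f_x(0^\mathbb{N}):=\{c_n\st n\in S_x\}$. Since any functional maps $c_n$ to something Turing-below $c_n$, one checks $\{c_n\st n\in S\}\medvedevreducible\{c_n\st n\in T\}$ iff $T\subseteq S$, so the $f_x$ are pairwise Weihrauch-incomparable, i.e.\ order-isomorphic to $P$; they all reduce to $g$ defined by $g(0^\mathbb{N}):=\{r\}$ (output the $\min S_x$-th column of $r$); yet $\bigcap_{x<\omega_1}f_x(0^\mathbb{N})=\emptyset$, so no cofinal $E$ satisfies the conclusion.

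So the honest verdict is: for chains --- which is the only case in which the paper actually applies this lemma (\thref{thm:long_chain_no_UB}, \thref{thm:chain_with_sup}, \thref{thm:no_upper_bound_wo}) --- your argument is complete and identical to the paper's, because in a linear order a non-cofinal set lies strictly below its witness, so a cover by countably many non-cofinal sets yields a countable cofinal set of witnesses, contradicting $\setcofinality(P)>\omega$. There is no further content hiding behind that step; the general statement needs the extra hypothesis that $P$ is not a union of countably many non-cofinal subsets. One small correction to your parenthetical: directedness plus regular uncountable cofinality does not suffice either, since in a directed set a non-cofinal subset need not be bounded --- $\omega_1\times\omega$ with the product order is directed with set-cofinality $\omega_1$, but it is the union of the countably many non-cofinal sets $\omega_1\times\{n\}$.
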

\begin{proof}
    Let $g$ be an upper bound for $\sequence{f_x}{x\in L}$. For every $e,i$, let $E_{e,i}:=\{x \in L \st f_x \weireducible g \text{ via }\Phi_e, \Phi_i\}$. Since $\bigcup_{e,i\in\mathbb{N}} E_{e,i} = L$ and $\omega < \cofinality(L)$, there are $e,i\in\mathbb{N}$ such that $E_{e,i}$ is cofinal in $L$. For the sake of readability, let $E:=E_{e,i}$, $\Phi:=\Phi_e$, and $\Psi:=\Phi_i$.

    Notice that, by definition of Weihrauch reducibility, for every $p\in \bigcup_{z \in E} \dom(f_z)$, $\Phi(p)\in\dom(g)$. Moreover, for every $y \in I^E_p$ and every $q\in g(\Phi(p))$, $\Psi(p,q)\in f_y(p)$. In particular, $\bigcap_{y \in I^E_p} f_y(p) \neq \emptyset$, which concludes the proof.
\end{proof}

If $\cofinality(L) =\omega$ then the conclusion of \thref{thm:no_upper_bound_1} may fail. Given a chain $\sequence{f_n}{n \in \omega}$ such that $\bigcap_{n} \dom(f_n)\neq \emptyset$ (e.g.\ as in \thref{thm:chain-no-upper-bound}), for every $n$, let $h_n$ be defined as $\dom(h_n) := \dom(f_n)$ and $h_n(p) := \{n\}\times f_n(p)$. Clearly $h_n \weiequiv f_n$, but, for any $p \in \bigcap_{n} \dom(f_n)$ and $n\neq m$, $h_n(p) \cap h_m(p) = \emptyset$.
However, the proof of \thref{thm:no_upper_bound_1} can be adapted to obtain that when $L$ is uncountable and $\cofinality(L) =\omega$, for every cardinal $\lambda<|L|$, there is $E\subseteq L$ with $|E|=\lambda$ such that for every $ p\in\bigcup_{z \in E}\dom(f_z)$, $\bigcap_{x \in I_p^E} f_x(p)\neq \emptyset$.



We now provide a sufficient condition for a family of
problems $\family{f_x}{x\in P}$, ordered as the partial order $P$, to have an upper bound. We do not need to require that $P$ has uncountable cofinality. However, the statement is trivial if
$\setcofinality(P)=\omega$, as every countable family of problems has an upper bound.

\begin{lemma}
    \thlabel{thm:no_upper_bound_2}
    Let $P$ be a partial order and let $\family{f_x}{x\in P}$ be a family of partial multi-valued functions that is order-isomorphic to $P$. For every $A\subseteq P$ and every $p\in\Baire$, let $I_p^A:=\{x \in A \st p\in\dom(f_x)\}$.

    If there is a cofinal $E\subseteq P$ such that
    \[ \left(\forall p\in \bigcup_{z \in E} \dom(f_z)\right)\left( \bigcap_{y\in I^E_p} f_y(p) \neq \emptyset\right) \]
    then $\family{f_x}{x\in P}$ has an upper bound.
\end{lemma}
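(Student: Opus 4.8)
The idea is to build an explicit upper bound $g$ out of the witnessing cofinal set $E$ and the ``common solution'' assumption. The natural guess is that $g$ should take as input an encoding of an element $x \in E$ together with an input $p \in \dom(f_x)$, and should output some $q$ lying in $\bigcap_{y \in I_p^E} f_y(p)$. Since $P$ need not be countable, we cannot literally code elements of $E$ by naturals; instead I will work with representatives of the $f_x$. More precisely, for each $e \in \mathbb{N}$ let $h_e$ be the $e$-th partial multi-valued function on $\Baire$ in some fixed enumeration of ``all'' problems (or, more carefully, restrict attention to the countably many distinct problems $\{f_x \st x \in E\}$ up to equality of graphs — but there may be uncountably many, so this needs the next idea). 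The cleaner route: recall from the background that $f \weireducible g$ implies $\dom(g) \medvedevreducible \dom(f)$, so an upper bound for a long chain ``sees'' all the domains; I will instead directly define
\[
g(\pairing{p, \text{aux}}) := \bigcap_{y \in I_p^E} f_y(p),
\]
with domain $\{\pairing{p,\text{aux}} \st p \in \bigcup_{z\in E}\dom(f_z)\}$, where the auxiliary input is ignored, and then check that $f_x \weireducible g$ for every $x \in P$ (not just $x \in E$).

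The key steps, in order, are as follows. First, define $g$ as the partial multi-valued function with $\dom(g) := \{\, p \st p \in \bigcup_{z \in E}\dom(f_z)\,\}$ and $g(p) := \bigcap_{y \in I_p^E} f_y(p)$; by the hypothesis this set is always non-empty on the domain, so $g$ is a legitimate problem. Second, fix an arbitrary $x \in P$; since $E$ is cofinal in $P$, pick $z \in E$ with $x \le z$ (under the order-isomorphism $P \cong \family{f_x}{x\in P}$ this means $f_x \weireducible f_z$), and fix computable functionals $\Phi_0, \Psi_0$ witnessing $f_x \weireducible f_z$. Third, exhibit the reduction $f_x \weireducible g$: on input $p \in \dom(f_x)$, put $\Phi(p) := \Phi_0(p)$, which lies in $\dom(f_z)$ hence in $\dom(g)$ since $z \in E$; and put $\Psi(p, q) := \Psi_0(p, q)$. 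To see this works, note that for $q \in g(\Phi(p))$ we have in particular $q \in f_z(\Phi_0(p))$ because $z \in I^E_{\Phi_0(p)}$, so $\Psi_0(p,q) \in f_x(p)$ by the choice of $\Phi_0,\Psi_0$. This shows $f_x \weireducible g$ for every $x \in P$, so $g$ is an upper bound.

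The one genuine subtlety — the step I expect to be the main obstacle to state cleanly — is verifying that $z \in I^E_{\Phi_0(p)}$, i.e.\ that $\Phi_0(p) \in \dom(f_z)$. This is exactly the defining property of a Weihrauch reduction $f_x \weireducible f_z$ (the inner reduction $\Phi_0$ maps $\dom(f_x)$ into $\dom(f_z)$), so it is immediate once the right functionals are named, but one must be careful that the cofinality of $P$ is used only through the choice of $E$ (which is handed to us) and that no countability of $P$ is needed. A secondary bookkeeping point is that the ``input'' to $g$ can be taken to be simply $p$ (no pairing with an auxiliary code is needed), because a single pair $(\Phi_0,\Psi_0)$ suffices for each individual $x$; we do not need $g$ to know which $x$ it is serving. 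Finally, one should remark that if $\setcofinality(P) = \omega$ the statement is vacuous in the sense noted before the lemma, since a countable family of problems trivially has an upper bound (take the countable join), so no hypothesis on $\cofinality(P)$ is required in the statement.
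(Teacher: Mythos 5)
Your construction of $g$ (domain $\bigcup_{z\in E}\dom(f_z)$, value $\bigcap_{y\in I_p^E} f_y(p)$, non-empty by hypothesis) is exactly the paper's, and your verification is correct; the paper merely phrases the last step slightly differently, first observing that it suffices to bound the cofinal subfamily $\family{f_z}{z\in E}$ and then noting $f_z\weireducible g$ trivially since $\emptyset\neq g(p)\subseteq f_z(p)$, whereas you inline that reduction by composing with a witness of $f_x\weireducible f_z$. Either way the argument is sound and essentially identical.
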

\begin{proof}
    Notice first of all that, since $E$ is cofinal in $P$, it is enough to show that $\family{f_z}{z\in E}$ has an upper bound. Let $g\pmfunction{\Baire}{\Baire}$ be the problem with $\dom(g):=\bigcup_{z\in E} \dom(f_z)$ defined as
    \[ g(p):= \bigcap_{y \in I_p^E} f_y(p). \]
    Observe that, for every $z \in E$ and every $p\in\dom(f_z)$, $\emptyset\neq g(p)\subseteq f_z(p)$. This shows that $(\forall z \in E)(f_z \weireducible g)$.
\end{proof}

Combining the previous two lemmas, we obtain the following characterization:

\begin{theorem}
    \thlabel{thm:no_upper_bound}
    Let $L$ be an infinite linear order and let $\sequence{f_x}{x\in L}$ be a chain of partial multi-valued functions. The following are equivalent:
    \begin{enumerate}
        \item $\sequence{f_x}{x\in L}$ has an upper bound in $\WeiDeg$;
        \item $\cofinality(L)=\omega$ or there is a cofinal $E\subseteq L$ such that for every $p\in\bigcup_{z \in E}\dom(f_z)$, $\bigcap_{y \in I_p^E} f_y(p)\neq \emptyset$, where $I_p^E:=\{y \in E \st p\in\dom(f_y)\}$.
    \end{enumerate}
\end{theorem}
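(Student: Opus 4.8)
The plan is to derive \thref{thm:no_upper_bound} directly by combining \thref{thm:no_upper_bound_1} and \thref{thm:no_upper_bound_2}, together with the trivial observation that any countable family of problems has an upper bound (via the operator $\sequence{f_i}{i\in\mathbb{N}}\mapsto\bigsqcup_{i\in\mathbb{N}} f_i$ described in the background section). Since $P$ is infinite, $\setcofinality(P)$ is well-defined and is either $\omega$ or uncountable, so the two cases in $(2)$ are exhaustive and the equivalence makes sense.

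For the implication $(2)\Rightarrow(1)$: if $\setcofinality(P)=\omega$, fix a countable cofinal $D\subseteq P$; then $\bigsqcup_{z\in D} f_z$ is an upper bound for $\sequence{f_z}{z\in D}$ and hence, by cofinality of $D$, for the whole family $\family{f_x}{x\in P}$. If instead there is a cofinal $E\subseteq P$ with $\bigcap_{y\in I_p^E} f_y(p)\neq\emptyset$ for every $p\in\bigcup_{z\in E}\dom(f_z)$, then \thref{thm:no_upper_bound_2} immediately yields an upper bound.

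For the implication $(1)\Rightarrow(2)$: assume $\family{f_x}{x\in P}$ has an upper bound. If $\setcofinality(P)=\omega$ we are already in the first disjunct of $(2)$, so assume $\setcofinality(P)>\omega$. Then \thref{thm:no_upper_bound_1} applies verbatim and produces a cofinal $E\subseteq P$ with exactly the property required in the second disjunct of $(2)$.

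There is essentially no obstacle here: the theorem is a bookkeeping combination of the two preceding lemmas plus the elementary fact about countable families, and the only point requiring a (trivial) remark is that ``$P$ infinite'' guarantees $\setcofinality(P)$ is either $\omega$ or uncountable so that the case split is legitimate. The statement is phrased in terms of upper bounds in $\WeiDeg$ rather than on representatives, but this is harmless since the witnessing $g$ constructed in \thref{thm:no_upper_bound_2} (or $\bigsqcup_{z\in D}f_z$ in the countable case) is an honest partial multi-valued function, and reducibility to it is inherited by the whole family through cofinality.
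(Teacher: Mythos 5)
Your proposal is correct and follows essentially the same route as the paper: both prove $(1)\Rightarrow(2)$ by splitting on whether $\setcofinality(P)=\omega$ and otherwise invoking \thref{thm:no_upper_bound_1}, and both prove $(2)\Rightarrow(1)$ via \thref{thm:no_upper_bound_2} together with the observation that a countable (cofinal) subfamily always has an upper bound via $\bigsqcup$. Your extra remarks about the case split being exhaustive and about working with representatives are harmless elaborations of what the paper leaves implicit.
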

\begin{proof}
    \begin{description}
        \item[$(1)\Rightarrow (2)$:] If $\cofinality(L)=\omega$, then this is trivial; otherwise, it follows from \thref{thm:no_upper_bound_1}.

        \item[$(2)\Rightarrow (1)$:] Immediate by \thref{thm:no_upper_bound_2}. \qedhere
    \end{description}
\end{proof}

A natural question is whether \thref{thm:no_upper_bound} can be extended to the case where $P$ is just a partial order and in place of cofinality we consider set-cofinality.
A partial order $P$ is called \textdef{lean} if every subset of cardinality $\ge \setcofinality(P)$ is cofinal in $P$. The notion of lean poset of arbitrary cardinality was studied by Diestel and Pikhurko in \cite{DP03}. It is easy to extend the proof of \thref{thm:no_upper_bound_1}, and hence \thref{thm:no_upper_bound}, to posets with a lean cofinal subset. However Diestel and Pikhurko proved that an infinite poset has a lean cofinal subset if and only if it contains a cofinal chain, so that the generalization to posets with lean cofinal subsets is only apparent.

For the sake of readability, we highlight the following particular case of
\thref{thm:no_upper_bound}.

\begin{corollary}
    \thlabel{thm:no_upper_bound_wo}
    Let $\kappa>\omega$ be a regular cardinal and let $\sequence{f_\alpha}{\alpha<\kappa}$ be a chain of multi-valued functions. The following are equivalent:
    \begin{enumerate}
        \item $\sequence{f_\alpha}{\alpha<\kappa}$ has an upper bound in $\WeiDeg$;
        \item there is a cofinal subsequence $\sequence{\overline{f}_\alpha}{\alpha<\kappa}$ such that for every $p \in \bigcup_{\alpha<\kappa} \dom(\overline{f}_\alpha)$,
        \[ \bigcap \{ \overline{f}_\alpha(p) \st p\in \dom(\overline{f}_\alpha) \}\neq \emptyset. \tag*{\qed}\]
    \end{enumerate}
\end{corollary}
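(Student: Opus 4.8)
The plan is to obtain this as the special case $P=\kappa$ of \thref{thm:no_upper_bound}, so essentially all the work has already been done. Take $P$ to be the ordinal $\kappa$ with its usual order and $\family{f_x}{x\in P}:=\sequence{f_\alpha}{\alpha<\kappa}$; since $\kappa>\omega$, $P$ is infinite, and by hypothesis the family is order-isomorphic to $P$. The first thing to record is that, because $\kappa$ is a regular uncountable cardinal, $\setcofinality(\kappa)=\cofinality(\kappa)=\kappa>\omega$. Hence the disjunct ``$\setcofinality(P)=\omega$'' in clause~(2) of \thref{thm:no_upper_bound} cannot hold, and that theorem collapses to the assertion that $\sequence{f_\alpha}{\alpha<\kappa}$ has an upper bound in $\WeiDeg$ if and only if there is a cofinal $E\subseteq\kappa$ with $\bigcap_{y\in I_p^E}f_y(p)\neq\emptyset$ for every $p\in\bigcup_{z\in E}\dom(f_z)$, where $I_p^E=\{y\in E \st p\in\dom(f_y)\}$.

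The remaining step is purely bookkeeping: I would argue that ``cofinal $E\subseteq\kappa$'' and ``cofinal subsequence $\sequence{\overline{f}_\alpha}{\alpha<\kappa}$'' encode the same data. A cofinal subset of the regular cardinal $\kappa$ has cardinality $\kappa$ and, being a subset of the ordinal $\kappa$, has order type exactly $\kappa$; so listing $E$ in increasing order as $\sequence{e_\alpha}{\alpha<\kappa}$ and putting $\overline{f}_\alpha:=f_{e_\alpha}$ yields a cofinal subsequence indexed by $\kappa$. For this subsequence $\bigcup_{z\in E}\dom(f_z)=\bigcup_{\alpha<\kappa}\dom(\overline{f}_\alpha)$, and for each $p$ in this union $\{\overline{f}_\alpha(p) \st p\in\dom(\overline{f}_\alpha)\}=\{f_y(p) \st y\in I_p^E\}$, so the nonemptiness condition on $E$ becomes exactly the condition displayed in clause~(2) of the corollary. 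Conversely, every cofinal subsequence $\sequence{\overline{f}_\alpha}{\alpha<\kappa}$ of $\sequence{f_\alpha}{\alpha<\kappa}$ arises in this way from a (unique) cofinal $E\subseteq\kappa$. Substituting this reformulation into the collapsed form of \thref{thm:no_upper_bound} gives the stated equivalence.

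I do not anticipate any genuine obstacle: the mathematical content is entirely inherited from \thref{thm:no_upper_bound}, and the only point requiring the slightest care is the use of the regularity of $\kappa$ to guarantee both that $\setcofinality(\kappa)>\omega$ and that cofinal subsets of $\kappa$ have order type $\kappa$, which is precisely what lets us pass freely between cofinal subsets and cofinal $\kappa$-indexed subsequences.
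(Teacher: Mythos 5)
Your proposal is correct and is exactly how the paper treats this statement: the corollary is stated as an immediate specialization of \thref{thm:no_upper_bound} (hence the \verb|\qed| in its statement), with regularity of $\kappa$ ruling out the $\setcofinality(P)=\omega$ disjunct and ensuring that cofinal subsets of $\kappa$ have order type $\kappa$, so they correspond precisely to cofinal $\kappa$-indexed subsequences. The bookkeeping you spell out (including the identification of $I_p^E$ with the index set in the displayed intersection) is accurate and complete.
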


While \thref{thm:no_upper_bound} characterizes precisely when a chain of problems has an upper bound, we only have presented an example of a chain of
order type $\omega_1$ with no upper bound (\thref{thm:chain-no-upper-bound}).
However, explicit examples can be easily obtained for every $\eta \le
\continuum$ with uncountable cofinality. The same strategy can be used to
obtain a chain (with no upper bound) isomorphic to any subchain of
$(2^\continuum, \supseteq)$ with uncountable cofinality.

\begin{theorem}
    \thlabel{thm:long_chain_no_UB}
    For every ordinal $\eta \le \continuum$ with $\cofinality(\eta)>\omega$, there is a chain $\sequence{f_\alpha}{\alpha<\eta}$ in $\WeiDeg$ without upper bound.
\end{theorem}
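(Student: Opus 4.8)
\emph{Proof proposal.} The plan is to lift a chain from the Medvedev degrees into the Weihrauch degrees via constant multi-valued functions, as in the proof of \thref{thm:chain-no-upper-bound}, and then to rule out an upper bound using \thref{thm:no_upper_bound}. Since $\eta\le\continuum$, the family $\{[\alpha,\eta)\st\alpha<\eta\}$ is a chain of order type $\eta$ in $(2^\continuum,\supseteq)$, and $\cofinality(\eta)>\omega$ by hypothesis, so the ``moreover'' part of \thref{thm:long_chains_medvedev} supplies a chain $\sequence{A_\alpha}{\alpha<\eta}$ in $\MedvedevDeg$, order-isomorphic to $\eta$, with no non-trivial upper bound. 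Because this chain is strictly $\medvedevreducible$-increasing and $\eta$ is a limit ordinal, no $A_\alpha$ is the top degree $\emptyset$, so the chain lies in $\MedZero$; then \thref{thm:chain_upper_bound_M0} gives that every cofinal subsequence of length $\cofinality(\eta)$ has empty intersection, and since an arbitrary cofinal $E\subseteq\eta$ contains a cofinal subset of order type $\cofinality(\eta)$, we get $\bigcap_{\alpha\in E}A_\alpha=\emptyset$ for \emph{every} cofinal $E\subseteq\eta$.

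Next I would define, for each $\alpha<\eta$, the total multi-valued function $f_\alpha\mfunction{\Baire}{\Baire}$ by $f_\alpha(p):=A_\alpha$ for all $p$ (a legitimate problem since each $A_\alpha\neq\emptyset$). Unwinding the definitions shows $f_\alpha\weireducible f_\beta$ iff $A_\alpha\medvedevreducible A_\beta$: a reduction witnessed by $\Phi,\Psi$ makes $q\mapsto\Psi(0^\mathbb{N},q)$ map $A_\beta$ into $A_\alpha$, and conversely a functional $\Gamma$ mapping $A_\beta$ into $A_\alpha$ yields the reduction witnessed by $\Phi:=\id$ and $\Psi(p,q):=\Gamma(q)$. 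Hence $\sequence{f_\alpha}{\alpha<\eta}$ is a chain in $\WeiDeg$ order-isomorphic to $\eta$, and it remains only to check it has no upper bound.

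Finally, to show $\sequence{f_\alpha}{\alpha<\eta}$ has no upper bound, suppose it did. As a linear order, $\eta$ has $\setcofinality(\eta)=\cofinality(\eta)>\omega$, so \thref{thm:no_upper_bound} yields a cofinal $E\subseteq\eta$ such that $\bigcap_{y\in I_p^E}f_y(p)\neq\emptyset$ for every $p\in\bigcup_{z\in E}\dom(f_z)=\Baire$; since the $f_y$ are total, $I_p^E=E$, so this says $\bigcap_{\alpha\in E}A_\alpha\neq\emptyset$, contradicting the first paragraph. The one point that needs care is ensuring, on the Medvedev side, that the intersection is empty not just over the whole index set but over \emph{every} cofinal subset — this is exactly why we appeal to \thref{thm:chain_upper_bound_M0} rather than merely to $\bigcap_{\alpha<\eta}A_\alpha=\emptyset$; the final transfer then works smoothly because all the $f_\alpha$ share the same domain, so condition~(2) of \thref{thm:no_upper_bound} collapses to a statement purely about the $A_\alpha$.
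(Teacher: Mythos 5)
Your proposal is correct. The underlying construction is essentially the paper's: in both cases the chain consists of ``constant'' problems whose solution sets are the tails of an antichain of elements of $\Baire$, and the absence of an upper bound is read off from \thref{thm:no_upper_bound} using the fact that these solution sets have empty intersection along every cofinal subset of $\eta$. The difference is in the packaging. The paper builds the $f_\alpha$ explicitly (domain $\{0^\mathbb{N}\}$, with $f_\alpha(0^\mathbb{N})$ a tail of a Turing antichain) and verifies both the strictness of the chain and the empty-intersection property directly from the antichain condition; you instead import the mass problems $A_\alpha$ from the ``moreover'' clause of \thref{thm:long_chains_medvedev}, lift them to total constant problems, recover the order type from the equivalence $f_\alpha\weireducible f_\beta \iff A_\alpha\medvedevreducible A_\beta$, and obtain the empty-intersection property by passing through \thref{thm:chain_upper_bound_M0}. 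Your route is more modular and isolates a reusable principle --- any chain in $\MedZero$ of uncountable cofinality with no upper bound lifts, via total constant problems, to a chain in $\WeiDeg$ of the same order type with no upper bound --- at the cost of invoking two earlier theorems where the paper's argument is self-contained and shorter. The two points you flagged as needing care (upgrading ``no cofinal subsequence of length $\cofinality(\eta)$ has nonempty intersection'' to ``every cofinal $E\subseteq\eta$ has empty intersection,'' and the collapse of $I_p^E$ to $E$ because the $f_\alpha$ are total) are handled correctly.
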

\begin{proof}
    Let $\family{p_\alpha}{\alpha<\eta}$ be a $\turingreducible$-antichain. For every $\alpha<\eta$, let $f_\alpha$ be the problem with $\dom(f_\alpha):=\{0^\mathbb{N}\}$ defined as $f_\alpha(0^\mathbb{N}):=\{ p_\delta \st \delta \ge \alpha \}$.

    Clearly, for every $\beta<\alpha$, $f_\beta \weireducible f_\alpha$ (as $f_\alpha(0^\mathbb{N}) \subset f_\beta(0^\mathbb{N})$) and $f_\alpha \not\weireducible f_\beta$ (as, by construction, $p_\beta$ does not compute any element in $f_\alpha(0^\mathbb{N})$). Hence, $\sequence{f_\alpha}{\alpha<\eta}$ is a strictly increasing chain in $\WeiDeg$ of order type $\eta$.
    The fact that $\sequence{f_\alpha}{\alpha<\eta}$ does not have an upper bound follows from \thref{thm:no_upper_bound}. Indeed, for every $E\subseteq \eta$ cofinal in $\eta$, $\bigcap_{\alpha\in E} f_\alpha(0^\mathbb{N})=\emptyset$.
\end{proof}
\medskip

We now turn our attention to studying when a sequence of problems admits a
lower bound. Since the Weihrauch degrees have a bottom element, the question
is only interesting when working in $\WeiZero$, i.e., when restricting our
attention to non-trivial lower bounds. Observe that, analogously to what
happens for the upper bounds, every family $\family{f_x}{x\in P}$ with
$\setcoinitiality(P)=\omega$ has a (non-zero) lower bound.

As already mentioned, $\MedvedevOp$ is (isomorphic to) an initial segment of
$\WeiDeg$. As such, examples of chains in $\WeiZero$ with no lower bound can
be readily obtained using \thref{thm:long_chains_medvedev}. In particular, we
highlight the following corollary:

\begin{corollary}
    For every cardinal $\kappa\le \continuum$ with $\cofinality(\kappa)>\omega$, there is a descending sequence $\sequence{f_\alpha}{\alpha< \kappa}$ in $\WeiZero$ with no lower bound.\qed
\end{corollary}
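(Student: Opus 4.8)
The plan is to transport into $\WeiZero$ a Medvedev chain with no upper bound, using the isomorphism $d\colon A\mapsto\id_A$ of $\MedvedevOp$ onto the lower cone of $\id$ in $\WeiDeg$ recalled in the Remark of Section~2. First I would produce a chain of order type $\kappa$ in $(2^\continuum,\supseteq)$: since $\kappa\le\continuum$ we may view $\kappa$ as an initial segment of $\continuum$, and the final segments $\mathcal{C}_\alpha:=\{\beta\st\alpha\le\beta<\kappa\}$ (for $\alpha<\kappa$) form a strictly $\supseteq$-descending chain, so that $\alpha\mapsto\mathcal{C}_\alpha$ is an isomorphism of $\kappa$ onto a chain in $(2^\continuum,\supseteq)$. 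Since $\cofinality(\kappa)>\omega$, the ``moreover'' clause of \thref{thm:long_chains_medvedev} then yields an order-isomorphic chain $\sequence{A_\alpha}{\alpha<\kappa}$ in $\MedvedevDeg$ (so $A_\alpha\strictlymedvedevreducible A_\beta$ whenever $\alpha<\beta$) with no non-trivial upper bound.

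Next I would set $f_\alpha:=\id_{A_\alpha}$. Since $\kappa$, being a cardinal of uncountable cofinality, is a limit ordinal, we have $\alpha+1<\kappa$ for all $\alpha<\kappa$, and $A_\alpha\strictlymedvedevreducible A_{\alpha+1}$ forces $A_\alpha\neq\emptyset$ (as $\emptyset$ is the $\medvedevreducible$-greatest degree); hence each $f_\alpha$ is a non-empty problem and $\sequence{f_\alpha}{\alpha<\kappa}$ lives in $\WeiZero$. By the equivalence $A\medvedevreducible B\iff\id_B\weireducible\id_A$, the strict inequalities $A_\alpha\strictlymedvedevreducible A_\beta$ for $\alpha<\beta$ translate into $f_\beta\strictlyweireducible f_\alpha$, so $\sequence{f_\alpha}{\alpha<\kappa}$ is a strictly descending chain of order type $\kappa^{*}$.

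It then remains to show that $\sequence{f_\alpha}{\alpha<\kappa}$ has no lower bound in $\WeiZero$. Suppose $h\in\WeiZero$ satisfies $h\weireducible f_\alpha$ for every $\alpha<\kappa$. Applying the observation that $f\weireducible g$ implies $\dom(g)\medvedevreducible\dom(f)$ with $f:=h$ and $g:=f_\alpha$, and recalling $\dom(f_\alpha)=\dom(\id_{A_\alpha})=A_\alpha$, we obtain $A_\alpha\medvedevreducible\dom(h)$ for every $\alpha<\kappa$. Since $h$ is non-empty, $\dom(h)\neq\emptyset$, so $\dom(h)$ would be a non-trivial upper bound for $\sequence{A_\alpha}{\alpha<\kappa}$ in $\MedvedevDeg$, contradicting the choice of this chain.

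I do not expect a real obstacle. The one point deserving care is the last step: a lower bound of $\sequence{f_\alpha}{\alpha<\kappa}$ in $\WeiDeg$ need not itself be of the form $\id_B$, so one cannot simply invoke the isomorphism $d$ and must instead argue through domains via $f\weireducible g\Rightarrow\dom(g)\medvedevreducible\dom(f)$; the remaining bookkeeping (exhibiting the $\supseteq$-chain of the right order type, reading off strictness) is routine.
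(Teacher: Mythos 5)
Your proposal is correct and follows exactly the route the paper intends: the corollary is stated with an immediate \qed precisely because it is obtained by applying the ``moreover'' clause of Theorem~\ref{thm:long_chains_medvedev-th} to a $\supseteq$-chain of order type $\kappa$ and transporting the resulting unbounded Medvedev chain into the lower cone of $\id$ via $A\mapsto\id_A$. Your final step arguing through $f\weireducible g\Rightarrow\dom(g)\medvedevreducible\dom(f)$ is exactly the content of Proposition~\ref{thm:chain_lower_bound-th}, which the paper proves immediately afterwards.
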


At the same time, it is not hard to show that there are descending sequences
with no lower bound that do not intersect the lower cone of $\id$. As an
explicit example, consider the following: let
$\family{p_\alpha}{\alpha<\omega_1}$ and $q\in \Baire$ be such that
$\sequence{p_\alpha}{\alpha<\omega_1}$ is a $\turingreducible$-chain and, for
every $\alpha$, $q \not\turingreducible p_\alpha$. Define $f_\alpha:=p_\alpha
\mapsto q$. Clearly, $\alpha<\beta$ implies $f_\beta \strictlyweireducible
f_\alpha$. The fact that $\sequence{f_\alpha}{\alpha\in \omega_1^*}$ has no
lower bound in $\WeiZero$ follows from the fact that, if $g\weireducible
f_\alpha$ then $\{p_\alpha\} \medvedevreducible \dom(g)$. However, every mass
problem only has countably many singletons in its lower cone. Finally, for
every $\alpha$, $f_\alpha \not \weireducible \id$ (as $q\not\turingreducible
p_\alpha$).

This example suggests the following simple observation:

\begin{proposition}
    \thlabel{thm:chain_lower_bound}
    Let $\family{f_\alpha}{\alpha<\kappa}$ be a chain in $\WeiZero$ and for each $\alpha$, let $D_\alpha:=\dom(f_\alpha)$. The chain $\family{f_\alpha}{\alpha<\kappa}$ has a lower bound in $\WeiZero$ iff the chain $\family{D_\alpha}{\alpha<\kappa}$ has an upper bound in $\MedZero$.
\end{proposition}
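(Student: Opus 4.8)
The plan is to prove both directions by exploiting the order-reversing correspondence between domains and the reducibility relation that the paper has already recorded: namely, that $f \weireducible g$ implies $\dom(g) \medvedevreducible \dom(f)$. For the forward direction, suppose $g \in \WeiZero$ is a lower bound for $\family{f_\alpha}{\alpha<\kappa}$, so $g \weireducible f_\alpha$ for every $\alpha$. Then $D_\alpha = \dom(f_\alpha) \medvedevreducible \dom(g)$ for every $\alpha$, so $\dom(g)$ is a mass problem that reduces below all the $D_\alpha$ — wait, the direction is the other way: $D_\alpha \medvedevreducible \dom(g)$ says $\dom(g)$ is an \emph{upper} bound for $\family{D_\alpha}{\alpha<\kappa}$ in $\MedvedevDeg$. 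Since $g \in \WeiZero$ we have $\dom(g) \neq \emptyset$, so $\dom(g)$ witnesses that $\family{D_\alpha}{\alpha<\kappa}$ has an upper bound in $\MedZero$. (One should note that $\family{D_\alpha}{\alpha<\kappa}$ is indeed $\medvedevreducible$-linearly ordered, hence a chain: if $f_\alpha \weireducible f_\beta$ then $D_\beta \medvedevreducible D_\alpha$, so the domains form a descending chain as $\alpha$ increases.)

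For the converse, suppose $B \neq \emptyset$ is an upper bound in $\MedZero$ for $\family{D_\alpha}{\alpha<\kappa}$, i.e.\ $D_\alpha \medvedevreducible B$ for every $\alpha$. The goal is to manufacture a single non-empty problem $g$ with $g \weireducible f_\alpha$ for all $\alpha$. The natural candidate is a problem whose domain is $B$ and which, on input $b \in B$, first computes (via the fixed Medvedev reductions) elements of each $D_\alpha$ and then outputs something that is simultaneously a correct $f_\alpha$-solution for the corresponding input — but a naive attempt fails because there are $\kappa$-many reductions $D_\alpha \medvedevreducible B$ and we cannot uniformize over all of them. The standard fix is to work with a single fixed reduction: pick one computable $\Phi$ such that $D_\alpha \medvedevreducible B$ via $\Phi$ for all $\alpha$ in some cofinal (in fact, since the $D_\alpha$ form a descending chain and $B$-reduces to each, actually \emph{all}) set — more simply, since $B \medvedevge D_\alpha$ for every $\alpha$, and the $D_\alpha$ are a descending chain, $\Phi(B) \subseteq D_\alpha$ for that $\alpha$ forces $\Phi(B) \subseteq \bigcap_\alpha D_\alpha$ only if one $\Phi$ works for all; in general different $\alpha$ use different $\Phi$. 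The clean approach: define $g$ with $\dom(g) := B$ and for $b \in B$, noting that the largest domain $D_0$ satisfies $D_0 \medvedevreducible B$ via some $\Phi_0$, and $\bigcap_\alpha D_\alpha \supseteq$... Let me instead set $g := \id_B$ restricted suitably. Actually the cleanest: since $D_\alpha \medvedevreducible B$, we have $\id_B \weireducible \id_{D_\alpha}$ in the Weihrauch degrees (by the correspondence $A \medvedevreducible B \iff \id_B \weireducible \id_A$ recorded in the excerpt); and $\id_{D_\alpha} \weireducible f_\alpha$ trivially, since any solution to $f_\alpha(p)$ (for $p \in D_\alpha$) can be post-processed to $p$ itself only if $p$ is computable from it, which need not hold. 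So instead define $g$ directly: let $g \colon B \rightrightarrows \Baire$ by $g(b) := f_{\alpha}(\Phi_\alpha(b))$ for... — this still has the uniformization problem.

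The resolution, and the step I expect to be the main obstacle, is handling the non-uniformity across the $\kappa$-many Medvedev reductions. The right move is: since each $D_\alpha \medvedevreducible B$, fix for each $\alpha$ a computable $\Phi^{(\alpha)}$ with $\Phi^{(\alpha)}(B) \subseteq D_\alpha$; there are only countably many computable functionals, so there is a single $\Phi$ and a cofinal set $C \subseteq \kappa$ with $\Phi(B) \subseteq D_\alpha$ for all $\alpha \in C$, hence $\Phi(B) \subseteq \bigcap_{\alpha \in C} D_\alpha = \bigcap_{\alpha<\kappa} D_\alpha$ (using that the $D_\alpha$ descend and $C$ is cofinal). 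Now fix any $b_0 \in B$ and put $p_0 := \Phi(b_0) \in \bigcap_\alpha D_\alpha$. Define $g$ to be the problem with domain $\{b_0\}$ (a single point) and $g(b_0) := \bigcap_{\alpha<\kappa} f_\alpha(p_0)$ — but this intersection could be empty, which is exactly the content that should fail. Hmm — so in fact the honest statement is subtler, and the correct $g$ must have $b_0 \mapsto$ a solution that works for the $\weireducible$-reduction, where the outer functional $\Psi$ gets to depend on the input to $f_\alpha$. Reduction $g \weireducible f_\alpha$ needs $\Phi', \Psi'$ with $\Phi'(b_0) \in D_\alpha$ and $\Psi'(b_0, r) \in g(b_0)$ for all $r \in f_\alpha(\Phi'(b_0))$. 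Taking $\Phi' := \Phi$ (so $\Phi'(b_0) = p_0$) and $\Psi'(b_0, r) := r$, we need $r \in g(b_0)$ for every $r \in f_\alpha(p_0)$, i.e.\ $f_\alpha(p_0) \subseteq g(b_0)$; so $g(b_0) := \bigcup_{\alpha} f_\alpha(p_0)$ works and is non-empty. Thus define $g$ with $\dom(g) := B$, $\Phi(B) \subseteq \bigcap_\alpha D_\alpha$, and $g(b) := \bigcup_{\alpha<\kappa} f_\alpha(\Phi(b))$; then for each $\alpha$, $g \weireducible f_\alpha$ via $(\Phi, \text{projection})$, and $g$ is non-empty, completing the proof. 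The obstacle is precisely packaging this "one $\Phi$ for cofinally many $\alpha$, then take union over all $\alpha$ on the output side" argument correctly — I would write it as the two displayed reductions and verify the Weihrauch conditions line by line.
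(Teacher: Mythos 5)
Your forward direction is correct and matches the paper. The converse, however, has a genuine gap, and it stems from one wrong claim early on: you dismiss $\id_{D_\alpha}\weireducible f_\alpha$ on the grounds that a solution of $f_\alpha(p)$ need not compute $p$. But the backward functional in a Weihrauch reduction receives the pair $(p,q)$, not just $q$, so $\Psi(p,q):=p$ witnesses $\id_{D_\alpha}\weireducible f_\alpha$ outright. This is exactly the paper's proof: given an upper bound $B$ for $\family{D_\alpha}{\alpha<\kappa}$ in $\MedZero$, set $g:=\id_B$; for each $\alpha$ separately, the Medvedev reduction $\Phi^{(\alpha)}(B)\subseteq D_\alpha$ serves as the forward functional and $(p,q)\mapsto p$ as the backward one. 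No uniformization over $\alpha$ is needed, because Weihrauch reducibility is a statement about each pair of problems individually.

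The replacement construction you build instead does not close the argument. First, the pigeonhole step producing a single $\Phi$ working on a cofinal $C\subseteq\kappa$ requires $\cofinality(\kappa)>\omega$, which is not assumed. Second, the identity $\bigcap_{\alpha\in C}D_\alpha=\bigcap_{\alpha<\kappa}D_\alpha$ rests on the domains being nested as sets; they are only a chain in the Medvedev order ($D_\beta\medvedevreducible D_\alpha$ does not give any inclusion between $D_\beta$ and $D_\alpha$), so this can fail, and indeed $\bigcap_{\alpha<\kappa}D_\alpha$ may be empty while an upper bound exists. Third, and most importantly, your final verification only shows $g\weireducible f_\alpha$ for $\alpha\in C$: for $\alpha\notin C$ you have no guarantee that $\Phi(B)\subseteq D_\alpha$, and since $C$ is cofinal rather than coinitial in the chain, $g\weireducible f_\beta$ for large $\beta$ does not transfer down to smaller $\alpha$. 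A minimal repair of your construction would drop the pigeonhole entirely and set $g(b):=\bigcup_{\alpha<\kappa}f_\alpha(\Phi^{(\alpha)}(b))$ with domain $B$, witnessing $g\weireducible f_\alpha$ via $(\Phi^{(\alpha)},\pi_2)$ for each $\alpha$; but the cleanest route remains $g:=\id_B$.
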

\begin{proof}
    If $g$ is a lower bound for $\family{f_\alpha}{\alpha<\kappa}$, then $\dom(g)$ is a lower bound for $\family{D_\alpha}{\alpha<\kappa}$. On the other hand, if $B$ is an upper bound for $\family{D_\alpha}{\alpha<\kappa}$, then $g:=\id_B$ is a lower bound for $\family{f_\alpha}{\alpha<\kappa}$.
\end{proof}

Despite the fact that a chain with no lower bound in $\WeiZero$ can live
outside the lower cone of $\id$, we now show that, when restricting our
attention to chains of order type $\kappa^*$ for some regular $\kappa$, there
is a precise correspondence between the existence of chains in $\WeiZero$
with no lower bound and the existence of chains in $\MedZero$ with no upper
bound.

\begin{theorem}
    \thlabel{thm:W0_M0}
    For every regular cardinal $\kappa$, the following are equivalent:
    \begin{enumerate}
        \item there is a sequence in $\WeiZero$ of order type $\kappa^*$ which is unbounded below;
        \item there is a sequence in $\MedZero$ of order type $\kappa$ which is unbounded above.
    \end{enumerate}
\end{theorem}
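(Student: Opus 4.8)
The plan is to prove the two implications separately, using \thref{thm:chain_lower_bound} as the bridge between the Weihrauch side and the Medvedev side, but being careful about order types: \thref{thm:chain_lower_bound} talks about lower bounds, whereas here we want sequences that are \emph{unbounded} below (resp.\ above). For $(2)\Rightarrow(1)$: suppose $\sequence{A_\alpha}{\alpha<\kappa}$ is a $\medvedevreducible$-increasing sequence in $\MedZero$ that is unbounded above. Without loss of generality each $A_\alpha$ is a non-empty mass problem. Define $f_\alpha:=\id_{A_\alpha}$. Since $d=A\mapsto\id_A$ reverses order, $\sequence{f_\alpha}{\alpha<\kappa}$ is $\weireducible$-decreasing, i.e.\ a sequence of order type $\kappa^*$ in the lower cone of $\id$, hence in $\WeiZero$. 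If it had a lower bound $g$ in $\WeiZero$, then by \thref{thm:chain_lower_bound} applied to this chain, $\sequence{\dom(f_\alpha)}{\alpha<\kappa}=\sequence{A_\alpha}{\alpha<\kappa}$ would have an upper bound in $\MedZero$, contradicting unboundedness. So $\sequence{f_\alpha}{\alpha<\kappa}$ is unbounded below, giving $(1)$.

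For $(1)\Rightarrow(2)$: suppose $\sequence{f_\alpha}{\alpha<\kappa}$ is a $\weireducible$-decreasing sequence in $\WeiZero$ that is unbounded below. Set $D_\alpha:=\dom(f_\alpha)$. Since $f_\alpha\weireducible f_\beta$ implies $D_\beta\medvedevreducible D_\alpha$, the sequence $\sequence{D_\alpha}{\alpha<\kappa}$ is $\medvedevreducible$-increasing (weakly). By \thref{thm:chain_lower_bound}, since $\sequence{f_\alpha}{\alpha<\kappa}$ has no lower bound in $\WeiZero$, the sequence $\sequence{D_\alpha}{\alpha<\kappa}$ has no upper bound in $\MedZero$. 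The remaining issue is that $\sequence{D_\alpha}{\alpha<\kappa}$ need not be \emph{strictly} increasing nor of order type exactly $\kappa$. However, an unbounded-above family in $\MedZero$ of cardinality $\le\kappa$ that is linearly ordered contains a cofinal strictly increasing subsequence; since $\kappa$ is regular this subsequence can be taken of order type $\kappa$ and it remains unbounded above. This yields a sequence in $\MedZero$ of order type $\kappa$ which is unbounded above, establishing $(2)$. (One subtle point: we must check that a weakly increasing chain in $\MedZero$ that is unbounded above does contain an honest cofinal $\kappa$-subsequence — this uses regularity of $\kappa$ together with the fact that any cofinal subfamily of an unbounded family is still unbounded, so we can thin out repeated values.)

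The main obstacle I anticipate is the order-type bookkeeping in $(1)\Rightarrow(2)$: the domains $D_\alpha$ of a strictly $\weireducible$-decreasing sequence need not be pairwise $\medvedev$-inequivalent (distinct Weihrauch degrees can share a domain up to Medvedev equivalence), so one cannot simply transport the order type. The fix is to pass to a cofinal subsequence where the $D_\alpha$ do strictly increase — this is possible precisely because if $\sequence{D_\alpha}{\alpha<\kappa}$ were eventually constant (or eventually bounded by finitely many Medvedev degrees on a cofinal set) it would have an upper bound in $\MedZero$, contradicting \thref{thm:chain_lower_bound}. Regularity of $\kappa$ then guarantees the thinned subsequence still has length $\kappa$. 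The other direction is essentially formal once one invokes the order-reversing isomorphism $d$ and \thref{thm:chain_lower_bound}.
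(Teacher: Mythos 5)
Your proof is correct and follows essentially the same route as the paper: both directions hinge on \thref{thm:chain_lower_bound}, and the order-type issue in $(1)\Rightarrow(2)$ is resolved exactly as in the paper's argument (if fewer than $\kappa$ Medvedev degrees occur among the domains, regularity of $\kappa$ forces one of them to occur cofinally often, weak monotonicity then makes the domain chain eventually constant and hence bounded above, contradicting \thref{thm:chain_lower_bound}). One caveat: your intermediate claim that any linearly ordered unbounded-above family of cardinality $\le\kappa$ contains a strictly increasing cofinal subsequence of order type $\kappa$ is false in that generality (consider an unbounded chain of order type $\omega_1$ with $\kappa=\omega_2$); what actually makes the thinning work is the weakly monotone indexing by the regular cardinal $\kappa$, which is the argument your closing paragraph supplies, so the gap is only in the phrasing, not in the proof.
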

\begin{proof}
The implication $(2)\Rightarrow (1)$ is trivial as the lower cone of $\id$ in $\WeiZero$ is reverse isomorphic to $\MedZero$. 
For the direction $(1)\Rightarrow (2)$, let $\sequence{f_\alpha}{\alpha\in \kappa^*}$ be a descending sequence in $\WeiZero$ with no lower bound. 
In particular, $\kappa>\omega$. 
Let $D_\alpha:=\dom(f_\alpha)$. 
It follows from the definition of Weihrauch reducibility that the family $\family{D_\alpha}{\alpha< \kappa}$ is a chain in $\MedZero$. 
If $\family{D_\alpha}{\alpha< \kappa}$ has order type $\kappa$, then we are done (by \thref{thm:long_chains_medvedev}). 
Assume towards a contradiction that $\family{D_\alpha}{\alpha< \kappa}$ is order-isomorphic to $L$ with $|L|<\kappa$. In other words, let $\sequence{D_x}{x\in L}$ be a subchain of $\family{D_\alpha}{\alpha< \kappa}$ such that for every $\alpha<\kappa$, there is $x\in L$ such that $D_x \medvedevequiv D_\alpha$. 
For every $x\in L$, let $M_x:=\{ \alpha < \kappa \st D_x \medvedevequiv D_\alpha\}$. Since $\kappa$ is regular, there is $x\in L$ such that $|M_x|=\kappa$. 
In particular, since $M_x$ is cofinal in $\kappa$ and $\family{D_\alpha}{\alpha< \kappa}$ is a chain, there is $\alpha_0$ such that for every $\alpha>\alpha_0$, $D_x \medvedevequiv D_\alpha$. 
This implies that $x$ is the top element of $L$, and therefore the sequence $\sequence{D_x}{x\in L}$ has an upper bound in $\MedZero$. 
We have now reached a contradiction, as \thref{thm:chain_lower_bound} implies that the sequence $\sequence{f_\alpha}{\alpha\in \kappa^*}$ has a lower bound in $\WeiZero$.
\end{proof}

\medskip

Having discussed the conditions under which a chain possesses an upper bound
or a lower bound, we now briefly examine when two families of problems admit
an intermediate degree.

\begin{theorem}\thlabel{thm:intermediate_degree}
Let $L,M$ be two linear orders with $\cofinality(L)>\omega$ and $\coinitiality(M)>\omega$. 
Let also $\sequence{f_x}{x\in L}$ and $\sequence{h_z}{z\in M}$ be a two chains in $\WeiDeg$ that are order-isomorphic, respectively, to $L$ and $M$, and such that, for each $x\in L$ and $z\in M$, $f_x\weireducible h_z$. 
Then the following are equivalent:
\begin{enumerate}
\item there is $g$ such that for every $x\in L$ and $z\in M$, $f_x \weireducible g \weireducible h_z$;
\item there are two computable functionals $\Phi,\Psi$ and two sets $X\subseteq L$ and $Z\subseteq M$ cofinal in $L$ and coinitial in $M$ respectively, such that, for every $x\in X$ and $z\in Z$, $f_{x} \weireducible h_{z}$ via $\Phi,\Psi$.
    \end{enumerate}
\end{theorem}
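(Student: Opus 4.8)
The plan is to prove the two implications separately. As in \thref{thm:no_upper_bound_1} and \thref{thm:no_upper_bound_2}, the set-cofinality/set-coinitiality hypotheses will be needed only for $(1)\Rightarrow(2)$, whereas $(2)\Rightarrow(1)$ will consist of building an explicit intermediate degree $g$ out of the single pair of functionals granted by $(2)$.

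For $(1)\Rightarrow(2)$, fix an intermediate $g$, so $f_x\weireducible g\weireducible h_z$ for all $x\in P$ and $z\in Q$. For each $x\in P$ choose computable functionals witnessing $f_x\weireducible g$; since there are only countably many pairs of computable functionals, $P$ is the union of the countably many sets $E_{e,i}:=\{x\in P\st f_x\weireducible g\text{ via }\Phi_e,\Phi_i\}$, and because $\setcofinality(P)>\omega$ one of them is cofinal in $P$ --- this is precisely the pigeonhole step in the proof of \thref{thm:no_upper_bound_1}. Call this set $X$ and the corresponding functionals $\Phi_0,\Psi_0$. Running the dual argument on the reductions $g\weireducible h_z$ and using $\setcoinitiality(Q)>\omega$, I obtain a coinitial $Z\subseteq Q$ together with a single pair $\Phi_1,\Psi_1$ such that $g\weireducible h_z$ via $\Phi_1,\Psi_1$ for every $z\in Z$. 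It then remains to compose: setting $\Phi(p):=\Phi_1(\Phi_0(p))$ and $\Psi(p,q):=\Psi_0(p,\Psi_1(\Phi_0(p),q))$ produces computable functionals that depend neither on $x$ nor on $z$ and witness $f_x\weireducible h_z$ for all $x\in X$ and $z\in Z$, which is exactly $(2)$.

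For $(2)\Rightarrow(1)$, let $\Phi,\Psi,X,Z$ be as in $(2)$ and consider the problem $g$ with $\dom(g):=\bigcup_{x\in X}\dom(f_x)$ and $g(p):=\bigcap\{f_x(p)\st x\in X\text{ and }p\in\dom(f_x)\}$. The key step is verifying that $g$ is a genuine, nonempty-valued problem; here the uniformity of $\Phi,\Psi$ across $x\in X$ and $z\in Z$ is essential (the hypothesis $f_x\weireducible h_z$ for all $x,z$ alone would not make the intersection nonempty). Indeed, given $p\in\dom(g)$ and any $z\in Z$ (such a $z$ exists since $\setcoinitiality(Q)>\omega$ forces $Q\neq\emptyset$), for every $x\in X$ with $p\in\dom(f_x)$ the reduction $f_x\weireducible h_z$ via $\Phi,\Psi$ gives $\Phi(p)\in\dom(h_z)$ and $\Psi(p,q)\in f_x(p)$ for every $q\in h_z(\Phi(p))$, so picking any such $q$ yields $\Psi(p,q)\in g(p)$. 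Having checked this, $f_x\weireducible g$ holds for every $x\in P$: for $x\in X$ we have $\emptyset\neq g(p)\subseteq f_x(p)$ on $\dom(f_x)$, so $f_x\weireducible g$ via the identity functionals, and this extends to all of $P$ by cofinality of $X$ (this is also an instance of \thref{thm:no_upper_bound_2} with $E:=X$, whose hypothesis is exactly what we just verified). Finally, $g\weireducible h_z$ via $\Phi,\Psi$ for every $z\in Z$: the inclusion $\Phi(\dom(g))\subseteq\dom(h_z)$ is the union over $x\in X$ of the inclusions $\Phi(\dom(f_x))\subseteq\dom(h_z)$, and for $p\in\dom(g)$ and $q\in h_z(\Phi(p))$ we get $\Psi(p,q)\in f_x(p)$ for every relevant $x\in X$, hence $\Psi(p,q)\in g(p)$; by coinitiality of $Z$ this extends to $g\weireducible h_z$ for every $z\in Q$. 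Therefore $g$ is intermediate, which is $(1)$.

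The composition bookkeeping in $(1)\Rightarrow(2)$ and the domain/value checks in $(2)\Rightarrow(1)$ are routine. The one delicate point is the pigeonhole step: writing a partial order of uncountable set-cofinality as a countable union and extracting a cofinal piece. For linear orders this is immediate, since a non-cofinal subset of a linear order admits a strict upper bound, so choosing one such bound for each of the countably many pieces would yield a countable cofinal subset, contradicting $\setcofinality(P)>\omega$; in general it is the fact underlying \thref{thm:no_upper_bound_1}, which we may invoke directly.
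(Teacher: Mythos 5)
Your proof is correct and follows essentially the same route as the paper: the same pigeonhole-and-compose argument for $(1)\Rightarrow(2)$, and an explicit intermediate problem with $\dom(g)=\bigcup_{x\in X}\dom(f_x)$ for $(2)\Rightarrow(1)$. The only difference is the choice of $g$: the paper sets $g(p):=\bigcup_{z\in Z}h_z(\Phi(p))$, so that non-emptiness is automatic and $g\weireducible h_z$ holds via $\Phi$ and the projection, whereas you set $g(p):=\bigcap\{f_x(p)\st x\in X \text{ and } p\in\dom(f_x)\}$ as in \thref{thm:no_upper_bound_2} and use the uniform pair $\Phi,\Psi$ both to certify that this intersection is non-empty and to witness $g\weireducible h_z$; both constructions work and your verification is complete.
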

\begin{proof}
For $(1)\Rightarrow (2)$, the set $X$ can be obtained as in the proof of \thref{thm:no_upper_bound_1} using the upper bound $g$ for the family $\sequence{f_x}{x\in L}$. 
In particular, there are $e,i\in\mathbb{N}$ such that for every $x\in X$, $f_x\weireducible g$ via $\Phi_e,\Phi_i$. 
The argument for obtaining $Z$ is symmetrical: since $g$ is a lower bound for $\sequence{h_z}{z\in M}$ and $\coinitiality(M)>\omega$, there are $n,k\in\mathbb{N}$ and a coinitial set $Z\subseteq M$ such that for every $z\in Z$, $g\weireducible h_z$ via $\Phi_n,\Phi_k$.
The maps $\Phi:=\Phi_n\circ\Phi_e$ and $\Psi:=(p,q)\mapsto \Phi_i(p, \Phi_k (\Phi_e(p),q))$ (the compositions of the functionals witnessing the reductions $f_x\weireducible g$ and $g\weireducible h_z$) are the desired functionals.

    To show that $(2)\Rightarrow (1)$, fix $\Phi$, $\Psi$, $X$, and $Z$ as in the hypotheses. We can define $g$ as follows: $\dom(g):=\bigcup_{x\in X} \dom(f_x)$ and $g(p):=\bigcup_{z\in Z} h(\Phi(p))$. Observe that for every $x\in X$, $f_x\weireducible g$ via the maps $\id$ and $\Psi$. Moreover, for every $z\in Z$, $g\weireducible h_z$ via $\Phi$ and $\pi_2:=(p,q)\mapsto q$. The claim follows from the fact that $X$ and $Z$ are cofinal in $L$ and coinitial in $M$ respectively.
\end{proof}

Observe that, unlike what happens when studying upper/lower bounds, the
statement is not trivial if $\cofinality(L)=\omega$ or
$\coinitiality(M)=\omega$. In particular, while there are chains of order
type $\omega+1+\omega^*$ in $\WeiDeg$, it is not clear whether every
$\omega+\omega^*$ chain is extendible to an $\omega+1+\omega^*$ chain.

We mention, however, that no interval in the Weihrauch degrees can have
cardinality $\omega$.

\begin{theorem}
    \thlabel{thm:intervals_W}
    Every infinite interval in $\WeiDeg$ is uncountable.
\end{theorem}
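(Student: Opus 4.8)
The plan is to show that if an interval $[a,b]$ in $\WeiDeg$ is infinite, then it contains an antichain of size $\continuum$ (hence is uncountable). The natural approach is to first produce \emph{some} countably infinite object inside the interval and then amplify it. More precisely, I would argue that an infinite interval must contain either an infinite ascending chain, an infinite descending chain, or an infinite antichain (this is a Ramsey-type observation: an infinite poset with no infinite chain and no infinite antichain is finite, by Dilworth/Erdős–Szekeres-style reasoning). In each of the three cases I want to extract an uncountable subset of the interval.

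For the antichain case the work is essentially to show that an infinite antichain sitting inside $[a,b]$ can be blown up to size $\continuum$ while staying inside $[a,b]$. Given countably many pairwise-incomparable degrees $c_0, c_1, \dots$ all satisfying $a \weireducible c_n \weireducible b$, I would consider, for each $S \subseteq \omega$, a degree built from $a$ together with the "$c_n$-components for $n \in S$" — concretely, take representatives $f$ of $a$ and $g_n$ of $c_n$ chosen below a fixed representative of $b$, and for $S$ infinite (and coinfinite, say) form $f_S := f \sqcup \bigsqcup_{n\in S} g_n'$ where the $g_n'$ are suitably "tagged/padded" copies of the $g_n$ designed so that (i) $f_S$ stays $\weireducible b$, (ii) $f_S \weige a$, and (iii) $f_S \weireducible f_T$ forces $S \subseteq T$ up to a finite set (so distinct almost-disjoint $S$'s give incomparable degrees). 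Since there is an almost disjoint family of size $\continuum$ on $\omega$, this yields a size-$\continuum$ antichain in $[a,b]$. The chain cases reduce to the antichain case or are handled directly: an infinite ascending chain $a \le c_0 < c_1 < \cdots \le b$ has a supremum candidate, but by \thref{thm:no_nontrivial_countable_sup} the chain is not its own supremum, so there is a degree strictly above all $c_n$; more usefully, inside $[c_0, b]$ one can splice in new degrees using the meet/join structure, and between any two consecutive $c_n < c_{n+1}$ one expects room to insert a further degree, iterating to get $\continuum$-many; alternatively, transform the chain into an antichain via a standard "XOR-of-initial-segments" trick and apply the previous paragraph.

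The key technical device throughout is a gluing/tagging construction on multi-valued functions that lets one combine a representative of $a$, a representative of $b$, and finitely or infinitely many intermediate pieces into a new problem whose Weihrauch degree is controlled precisely by which pieces were included. This is where I expect the main obstacle to lie: making the padding robust enough that Weihrauch reductions between two such glued problems are forced to respect the index sets (the direction "$f_S \weireducible f_T \Rightarrow S \subseteq^* T$"), while simultaneously keeping everything squeezed between $a$ and $b$. The coinfiniteness of the $S$'s, or a reserved "infinitely many copies of $a$" component, is the kind of safeguard one uses so that omitting finitely many pieces does not drop the degree below $a$ and including them does not push it above $b$. Once that lemma is in hand, the three cases combine to give the theorem.
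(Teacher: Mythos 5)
Your plan has a genuine gap at its center: the ``gluing/tagging'' lemma that everything else rests on is not proved, and it is precisely where the difficulty of the theorem lives. Concretely, for $f_S:=f\sqcup\bigsqcup_{n\in S}g_n'$ to stay $\weireducible b$ you need a \emph{single} pair of functionals reducing all the $g_n'$ with $n\in S$ to $b$ simultaneously; the hypotheses only give a separate, non-uniform reduction for each $n$, and the countable coproduct does not even lift to degrees. You can force uniformity by re-choosing the representatives $g_n'$ (e.g.\ hard-coding the witnessing functionals into them), but then the separation direction ``$f_S\weireducible f_T\Rightarrow S\subseteq^* T$'' is exactly the statement that the re-tagged pieces do not collapse, and nothing in the proposal argues for it. The chain case is in worse shape: the claim that between consecutive $c_n\strictlyweireducible c_{n+1}$ ``one expects room to insert a further degree'' is false in general, since the Weihrauch degrees are not dense --- empty intervals exist above every $g$ with $\id\not\weireducible g$, as recalled in the introduction. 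Your supremum remark is the one salvageable idea: since a strictly ascending countable chain has no supremum (\thref{thm:no_nontrivial_countable_sup}), there is an upper bound $\overline f$ of the chain with $f\not\weireducible\overline f$, and $f\sqcap\overline f$ is then a new element of the interval; but turning ``one more element'' into ``uncountably many'' requires further analysis that the proposal omits.

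For comparison, the paper avoids tagging altogether by splitting on the Medvedev interval $(\dom(f),\dom(h))$. If that interval is infinite, Terwijn's theorem gives an antichain of size $2^\continuum$ in $\MedvedevDeg$ between the two domains, and attaching these sets as extra input components to $f$ produces $2^\continuum$ problems in $(h,f)$ whose pairwise incomparability is certified purely by their Medvedev-incomparable domains --- no combinatorics on index sets is needed. If the Medvedev interval is finite, a countably infinite Weihrauch interval is ruled out by the supremum diagonalization sketched above, made rigorous via a reduction to the case where $f$ is not a minimal cover of any intermediate degree. If you want to pursue your route, the almost-disjoint amplification is the lemma you must actually prove, and the Medvedev degrees of the domains are the natural tool for certifying the required incomparabilities.
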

\begin{proof}
    Fix $h\strictlyweireducible f$ and assume that the interval $(h,f)$ is infinite. If the interval $(\dom(f), \dom(h))$ in $\MedvedevDeg$ is infinite, then, by \cite[Thm.\ 2.10]{Terwijn08}, there is an antichain of size $2^\continuum$ between $\dom(f)$ and $\dom(h)$. Fix any such antichain $\family{A_\alpha}{\alpha<2^\continuum}$. We can define $2^\continuum$ many problems $\family{g_\alpha}{\alpha<2^\continuum}$ between $h$ and $f$ as follows: for every $\alpha$, define $g_\alpha\mfunction{\dom(f)\times A_\alpha}{\Baire}$ as $g_\alpha(p,q):=f(p)$. Observe that $\dom(g_\alpha)=\dom(f)\vee A_\alpha \medvedevequiv A_\alpha$, hence $f \not \weireducible g_\alpha$ and $g_\alpha \not\weireducible h$. On the other hand, $g_\alpha \weireducible f$ (trivially) and $h\weireducible g_\alpha$ via the maps $t\mapsto (\Phi(t),\Gamma_\alpha(t))$ and $\Psi$, where $\Phi$ and $\Psi$ are the functionals witnessing $h\weireducible f$ and $\Gamma_\alpha$ witnesses $A_\alpha \medvedevreducible \dom(h)$. Moreover, the family $\family{g_\alpha}{\alpha<2^\continuum}$ is a $\weireducible$-antichain (because their domains are Medvedev-incomparable). This implies that the interval $(h,f)$ has size $2^\continuum$.

    Assume now that the interval $(\dom(f),\dom(h))$ is finite. Assume towards a contradiction that $|(h,f)|=\omega$, and choose a representative $g_n$ for each intermediate degree.

    \textbf{Claim:} Without loss of generality, we can assume that $f$ is not the minimal cover of any $g_n$.

    Indeed, assume that this is not the case and let $I:=\{ i\in\mathbb{N} \st f \text{ is a minimal cover of }g_i\}$. In particular, for every $i\neq j$, $f\weiequiv g_i \sqcup g_j$ (negating this would lead to a contradiction with the fact that $f$ is a minimal cover of $g_i$). As a consequence of \cite[Thm.\ 1.4]{LMPMVMinimalCovers}, if $f$ is a minimal cover of $g_i$, then $\dom(f)\strictlymedvedevreducible \dom(g_i)$. If $\dom(g_i)\medvedevequiv \dom(g_j)$ then $\dom(f)\medvedevequiv \dom(g_i) \wedge \dom(g_j) \medvedevequiv \dom(g_i)$, against the fact that $\dom(f)\strictlymedvedevreducible \dom(g_i)$. This implies that for every $i\neq j$, $\dom(g_i)\not\medvedevequiv \dom(g_j)$, and therefore $|I|<\omega$ (otherwise, the Medvedev-interval $(\dom(f),\dom(h))$ would be infinite). By the pigeonhole principle, there must be $i\in I$ such that the Weihrauch-interval $(h,g_i)$ is countable. If $g_i$ is not the minimal cover of any problem in the interval $(h,g_i)$, then we are done, as we can replace $f$ with $g_i$ and conclude the proof of the claim. Otherwise, we repeat the same argument with $g_i$ in place of $f$. We observe that this procedure is bound to terminate. Indeed, if not, we are defining a sequence of minimal covers $f >_{\mathrm{W}} g_{i_0} >_{\mathrm{W}} g_{i_1} >_{\mathrm{W}} \hdots$. In particular, since $\dom(f)\strictlymedvedevreducible \dom(g_{i_0})\strictlymedvedevreducible \dom(g_{i_1}) \strictlymedvedevreducible \hdots$, this would imply that the Medvedev-interval $(\dom(f),\dom(h))$ is infinite, which is a contradiction, and hence the claim is proved.

    This implies that the family $\{g_n\st n \in\mathbb{N}\}$ does not have a supremum (otherwise, by \thref{thm:no_nontrivial_countable_sup}, there would be $m\in\mathbb{N}$ such that $g_m = \sup_n \{g_n\st n \in\mathbb{N}\} \strictlyweireducible f$ is a minimal cover). In particular, since $f$ is not the supremum of $\{g_n\st n \in\mathbb{N}\}$, there is $\overline{f} \not \weige f$ such that, for every $n$, $g_n \weireducible \overline{f}$, so $g_n \strictlyweireducible \overline{f}$.

    Consider now the problem $f\sqcap \overline{f}$: clearly, $f\sqcap \overline{f}\strictlyweireducible f$ and for every $n$, $g_n \strictlyweireducible f\sqcap \overline{f}$ (otherwise $g_n$ would be the supremum of $\{g_m\st m \in\mathbb{N}\}$). This contradicts the fact that every degree in the interval $(h,f)$ was represented by some $g_n$, and therefore concludes the proof.
\end{proof}

Observe that examples of finite intervals can be easily obtained as a
corollary of \cite[Cor.\ 2.6]{LMPMVMinimalCovers}: indeed, for every
$n\in\mathbb{N}$, there is a problem $h$ with exactly $n$ minimal covers. The
Boolean algebra obtained by considering the joins of the finitely many
minimal covers of $h$ yields an interval of size $2^n$.

Notice also that the previous proof uses
\thref{thm:no_nontrivial_countable_sup} in a critical way to run a classical
diagonal argument. However, the following result shows that
\thref{thm:no_nontrivial_countable_sup} cannot be extended to
$\continuum$-sized chains, and hence the above strategy does not immediately
yield that no $\continuum$-sized interval exists in the Weihrauch degrees.

\begin{proposition}
    \thlabel{thm:chain_with_sup}
    For every $\kappa\le \continuum$ with $\cofinality(\kappa)>\omega$, there is a chain of order type $\kappa$ in $\WeiDeg$ that admits a supremum.
\end{proposition}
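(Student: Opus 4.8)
The plan is to build such a chain inside the lower cone of $\id$, exploiting the isomorphism $d\colon A\mapsto\id_A$ between $\MedvedevOp$ and that cone, but choosing the mass problems carefully enough that the relevant least upper bound — computed a priori only inside the cone — turns out to be a supremum in all of $\WeiDeg$. Since $\kappa\le\continuum$, I would fix a Turing antichain $\family{p_\delta}{\delta<\kappa}$ (which exists because $\kappa\le\continuum$) and set $A_\beta:=\{p_\delta\st\delta<\beta\}$ and $f_\beta:=\id_{A_\beta}$ for $\beta<\kappa$, together with $A:=\bigcup_{\beta<\kappa}A_\beta=\{p_\delta\st\delta<\kappa\}$ and $g:=\id_A$. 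The claim to establish is that $\sequence{f_\beta}{\beta<\kappa}$ is a strictly increasing chain of order type $\kappa$ with supremum $g$.

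First I would check that $\sequence{f_\beta}{\beta<\kappa}$ is strictly increasing. Monotonicity is immediate: $\beta<\gamma$ gives $A_\beta\subseteq A_\gamma$, hence $A_\gamma\medvedevreducible A_\beta$ and so $f_\beta\weireducible f_\gamma$ by the equivalence $A\medvedevreducible B\iff\id_B\weireducible\id_A$ recalled above. For strictness it suffices to see that $A_\beta\not\medvedevreducible A_\gamma$ whenever $\beta<\gamma$: pick $\delta$ with $\beta\le\delta<\gamma$, so $p_\delta\in A_\gamma$; any total value $\Phi(p_\delta)$ of a computable functional is $\turingreducible p_\delta$, so if it lay in $A_\beta$ it would equal some $p_\epsilon$ with $\epsilon<\beta\le\delta$, contradicting that $\family{p_\delta}{\delta<\kappa}$ is a Turing antichain. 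Thus no computable $\Phi$ maps $A_\gamma$ into $A_\beta$, $f_\gamma\not\weireducible f_\beta$, and $\beta\mapsto f_\beta$ is an order isomorphism of $\kappa$ onto a chain in $\WeiDeg$.

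Next, $g$ is an upper bound, since $A_\beta\subseteq A$ gives $A\medvedevreducible A_\beta$, i.e.\ $f_\beta\weireducible g$ for every $\beta$. The heart of the argument — and the step I expect to be the main obstacle, since a priori an upper bound of the $f_\beta$ need not lie in the lower cone of $\id$, so one cannot simply invoke a supremum in that cone — is to show that $g$ is the \emph{least} upper bound. Here I would take an arbitrary $h$ with $f_\beta\weireducible h$ for all $\beta<\kappa$, fix for each $\beta$ a pair of computable functionals witnessing $f_\beta\weireducible h$, and use $\cofinality(\kappa)>\omega$ together with the countability of the set of such pairs to extract a single pair $(\Phi,\Psi)$ that witnesses $f_\beta\weireducible h$ for all $\beta$ in some cofinal $T\subseteq\kappa$. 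I then claim $(\Phi,\Psi)$ witnesses $g\weireducible h$: given $p=p_\delta\in\dom(g)=A$, choose $\beta\in T$ with $\beta>\delta$, so $p\in A_\beta=\dom(f_\beta)$; since $(\Phi,\Psi)$ reduces $f_\beta$ to $h$, we get $\Phi(p)\in\dom(h)$ and $\Psi(p,q)\in f_\beta(p)=\{p\}=g(p)$ for every $q\in h(\Phi(p))$. What makes this work is exactly that the $A_\beta$ are increasing \emph{as sets} and that each $f_\beta$ has the trivial input-is-output behaviour, so that a reduction valid on $A_\beta$ for cofinally many $\beta$ is automatically valid on $A=\bigcup_{\beta<\kappa}A_\beta$. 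This yields $g=\sup_{\beta<\kappa}f_\beta$ in $\WeiDeg$; and since the chain is strictly increasing of uncountable cofinality, $g$ in fact lies strictly above every $f_\beta$, which completes the proof of the proposition.
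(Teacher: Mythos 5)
Your proof is correct, but it takes a different (and somewhat leaner) route than the paper's. The paper starts from the same mass problems $A_\alpha=\{p_\gamma\st\gamma<\alpha\}$ but attaches to them the genuinely multi-valued problems $f_\alpha\mfunction{A_\alpha}{\Baire}$, $f_\alpha(p):=\{p_\delta\st\delta\ge\alpha\}$, whose solution sets shrink as $\alpha$ grows; the supremum is exhibited as $f_\kappa$, and the leastness argument hinges on the fact that the intersection of the solution sets along a cofinal subsequence is still non-empty (namely $\{p_\kappa\}$). You instead push the Medvedev chain through the embedding $A\mapsto\id_A$, so every solution set is a singleton and that intersection condition is automatic; the only non-trivial point is then exactly the one you flag, namely that an arbitrary upper bound $h$ need not lie in the cone below $\id$, so the Medvedev infimum of the $A_\alpha$ does not a priori transfer to a Weihrauch supremum. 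Your uniformization step (countably many pairs of functionals plus $\cofinality(\kappa)>\omega$ yields a single pair valid on a cofinal set, which then reduces $\id_A$ to $h$ because the $A_\beta$ increase as sets) is the same device the paper uses and is carried out correctly, including the strictness of the chain via the Turing antichain. What each version buys: yours is more economical and shows in addition that such a chain with a supremum can be found entirely in the lower cone of $\id$ (equivalently, that this particular Medvedev infimum is preserved as a supremum in all of $\WeiDeg$); the paper's example, by contrast, consists of problems $f_\alpha\not\weireducible\id$, showing the phenomenon is not confined to that cone.
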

\begin{proof}
    Let $\family{p_\alpha}{\alpha<\kappa +1}$ be a $\kappa$-sized $\turingreducible$-antichain. For every $\alpha\le \kappa$, define $A_\alpha:=\{ p_\gamma \st \gamma < \alpha \}$. Observe that $\family{A_\alpha}{0<\alpha<\kappa}$ is a chain in $\MedZero$ of order type $\kappa^*$ and $A_\kappa = \inf_{\medvedevreducible} \{ A_\alpha \st \alpha <\kappa \}$. Indeed, whenever $0<\alpha<\beta<\kappa$, the reductions $A_\kappa \medvedevreducible A_\beta\medvedevreducible A_\alpha$ are trivial and the separation $A_\alpha \not\medvedevreducible A_\beta$ follows from the fact that $\family{p_\alpha}{\alpha<\eta}$ is an antichain (in particular, $A_\alpha \not \medvedevreducible \{p_\beta\})$. Similarly, $A_\beta \not\medvedevreducible A_\kappa$. To show that $A_\kappa$ is the greatest lower bound, assume that for every $\alpha<\kappa$, $B\medvedevreducible A_\alpha$. Then, since $\cofinality(\kappa)>\omega$, there is $e\in\mathbb{N}$ and a coinitial subsequence $\sequence{\overline{A_\alpha}}{\alpha<\cofinality(\kappa)}$ such that, for every $\alpha$, $B\medvedevreducible \overline{A_\alpha}$ via $\Phi_e$. Since $A_\kappa = \bigcup_{\alpha<\kappa} A_\alpha = \bigcup_{\alpha<\cofinality(\kappa)} \overline{A_\alpha}$, it follows that $B\medvedevreducible A_\kappa$.

    For every $\alpha\le \kappa$, we define $f_\alpha\mfunction{A_\alpha}{\Baire}$ as $f_\alpha(p):=\{p_\delta \st \delta \ge \alpha \}$. It is easy to see that $\family{f_\alpha}{\alpha\le\kappa}$ is a chain of order type $\kappa+1$. Indeed, if $\alpha<\beta\le \kappa$, then $f_\alpha \weireducible f_\beta$ is straightforward (for every $p\in A_\alpha$, $f_\alpha(p)\supset f_\beta(p) \neq\emptyset$), while $f_\beta\not\weireducible f_\alpha$ follows from $A_\alpha\not\medvedevreducible A_\beta$.

    To conclude the proof, let $h$ be an upper bound for $\sequence{f_\alpha}{\alpha<\kappa}$. As in the proof of \thref{thm:no_upper_bound_1}, there are $e,i\in\mathbb{N}$ and a cofinal subsequence $\sequence{\overline{f}_\alpha}{\alpha<\cofinality(\kappa)}$ such that for every $\alpha<\cofinality(\kappa)$, $\overline{f}_\alpha\weireducible h$ via $\Phi_e,\Phi_i$.

    We claim that $f_\kappa \weireducible h$ via $\Phi_e,\Phi_i$. Indeed, for every $p\in \dom(f_\kappa) = \bigcup_{\alpha<\cofinality(\kappa)} \dom(\overline{f}_\alpha)$, $\Phi_e(p)\in \dom(h)$. Moreover, for every $\alpha<\cofinality(\kappa)$ and every $q\in h(\Phi_e(p))$, $\Phi_i(p,q) \in \overline{f}_\alpha(p)$, hence
    \[ \Phi_i(p,q) \in \bigcap_{\alpha<\cofinality(\kappa)} \overline{f}_{\alpha}(p) = \{ p_\delta \st \delta \ge \kappa \} = f_\kappa(p). \qedhere \]
\end{proof}

\subsection{Cofinality and coinitiality}
\label{sec:cofinality_coinitiality}

In this section, we study the cofinality and the coinitiality of the
Weihrauch degrees. We first observe that, while for the Turing and the
Medvedev degrees the existence of a cofinal chain is independent of $\ZFC$
and equivalent to $\CH$, it is provable in $\ZFC$ that there are no cofinal
chains (of any order type) in $\WeiDeg$. To prove this, we first show that
$\setcofinality(\WeiDeg)>\continuum$.

\begin{lemma}
    \thlabel{thm:diag}
    For every family $\mathcal{F}$ of multi-valued functions with $|\mathcal{F}|\le \continuum$ and every infinite $A\subseteq \Baire$ with $|A|\ge |\mathcal{F}|$, there is $g\mfunction{A}{\Baire}$ such that for every $f\in\mathcal{F}$, $g\not\weireducible f$.
\end{lemma}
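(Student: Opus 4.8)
The plan is to diagonalize directly against every candidate Weihrauch reduction. Recall that $g\weireducible f$ is witnessed by a \emph{pair} of computable functionals, one of which maps $\Baire$ to $\Baire$ and one of which maps $\Baire\times\Baire$ to $\Baire$; there are only countably many such pairs, which I enumerate as $(\Phi_e,\Psi_i)_{e,i\in\mathbb{N}}$. Thus the reductions to be defeated are indexed by triples $(f,e,i)\in\mathcal{F}\times\mathbb{N}\times\mathbb{N}$. The cardinality of this index set is $|\mathcal{F}|\cdot\omega$: if $\mathcal{F}$ is finite this is just $\omega$, which is $\le|A|$ since $A$ is infinite, and if $\mathcal{F}$ is infinite it equals $|\mathcal{F}|\le|A|$ by hypothesis. (If $\mathcal{F}=\emptyset$, any $g$ with domain $A$ works, so assume $\mathcal{F}\neq\emptyset$.) Hence I may fix an injection $(f,e,i)\mapsto p_{f,e,i}\in A$ and reserve the single value $g(p_{f,e,i})$ to kill the reduction $g\weireducible f$ via $\Phi_e,\Psi_i$.

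Next I define $g\mfunction{A}{\Baire}$ by a case analysis at each reserved point; write $p:=p_{f,e,i}$. If $\Phi_e(p)\notin\dom(f)$ (in particular, if $\Phi_e(p)$ is undefined), then the first clause of the definition of $g\weireducible f$ already fails at $p$, so I set $g(p):=\Baire$. Otherwise $\Phi_e(p)\in\dom(f)$, hence $f(\Phi_e(p))\neq\emptyset$, and I fix some $q\in f(\Phi_e(p))$. If $\Psi_i(p,q)$ is undefined, the reduction again fails at $p$ and I set $g(p):=\Baire$. If $\Psi_i(p,q)$ is defined, say $\Psi_i(p,q)=r$, I set $g(p):=\Baire\setminus\{r\}$, which is non-empty; then $\Psi_i(p,q)=r\notin g(p)$, so the second clause of the reduction fails at $p$. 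For every $p\in A$ that is not reserved, set $g(p):=\Baire$. This defines a multi-valued function $g\mfunction{A}{\Baire}$ with $\dom(g)=A$, since every value is a non-empty subset of $\Baire$.

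Finally I verify $g\not\weireducible f$ for each $f\in\mathcal{F}$: if some pair $\Phi_e,\Psi_i$ witnessed $g\weireducible f$, then the reduction would in particular have to succeed at the point $p_{f,e,i}\in A=\dom(g)$, contradicting the construction in each of the three cases above. I do not anticipate a genuine obstacle here; the only points requiring care are the counting argument ensuring $|\mathcal{F}\times\mathbb{N}\times\mathbb{N}|\le|A|$, so that pairwise distinct diagonalization points are available, and the fact that, since $f$ is multi-valued, one must commit to a single witness $q\in f(\Phi_e(p))$ before inspecting $\Psi_i$.
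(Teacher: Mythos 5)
Your proposal is correct and follows essentially the same diagonalization as the paper: reserve one input of $A$ for each triple (problem, forward functional, backward functional), which is possible since $|\mathcal{F}|\cdot\omega\le|A|$, and at that input either output $\Baire$ (when the reduction already fails for domain or definedness reasons) or exclude the single value $\Psi_i(p,q)$ for a chosen witness $q$. The only cosmetic difference is that the paper indexes $\mathcal{F}$ by a subset of $\Baire$ before choosing the injection into $A$.
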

\begin{proof}
    Let $X\subseteq \Baire$ be such that $|X|= |\mathcal{F}|$ and let $\mathcal{F}=\{ f_x \st x \in X\}$. Let also $\varphi\function{\mathbb{N}\times\mathbb{N}\times X}{A}$ be an injective function.

    Intuitively, we define $g\mfunction{A}{\Baire}$ so that $\varphi(e,i,x)$ is the input for $g$ witnessing the fact that $g\not\weireducible f_x$ via $\Phi_e,\Phi_i$. More precisely, for every $e,i\in\mathbb{N}$ and every $x\in X$, we define $g$ on $p:=\varphi(e,i,x)$ as follows: if $\Phi_e(p) \notin \dom(f_x)$ or there is $q\in f_x(\Phi_e(p))$ such that $(p,q) \notin \dom(\Phi_i)$ then $g(p):=\Baire$. Otherwise, fix $q\in f_x(\Phi_e(p))$ and define $g(p):=\Baire\setminus \{ \Phi_i(p,q) \}$. Finally, define $g(p):=\Baire$ for every $p\notin\ran(\varphi)$.

    By construction, it is immediate to check that for every $x\in X$ and every $e,i \in \mathbb{N}$, $g\not\weireducible f_p$ via $\Phi_e,\Phi_i$ as witnessed by the input $\varphi(e,i,x)$ for $g$.
\end{proof}

As an immediate consequences of \thref{thm:diag}, we obtain:

\begin{corollary}
    \thlabel{thm:cof(W)>c}
    $\setcofinality(\WeiDeg)>\continuum$. \qed
\end{corollary}

\begin{corollary}
    No embedding of $\MedZero$ in the Weihrauch degrees can be cofinal in the Weihrauch degrees.
\end{corollary}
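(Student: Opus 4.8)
The plan is to combine \thref{thm:cof(W)>c} with the observation that $\MedZero$ possesses a cofinal subset of cardinality exactly $\continuum$.

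First I would recall why the family $\mathcal{S}$ consisting of (the Medvedev degrees of) the singletons $\{p\}$, $p\in\Baire$, is cofinal in $\MedZero$: for any non-empty $A\subseteq\Baire$, picking $p\in A$ gives $A\medvedevreducible\{p\}$ via the identity functional restricted to $\{p\}$. Since $\{p\}\medvedevequiv\{q\}$ exactly when $p\turingequiv q$, the set $\mathcal{S}$ has cardinality $\continuum$, the number of Turing degrees (alternatively, one can simply invoke \thref{thm:cofinality_m0} to obtain \emph{some} cofinal subset of $\MedZero$ of size $\le\continuum$).

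Next, suppose towards a contradiction that $\Theta$ is an order-preserving embedding of $\MedZero$ into $\WeiDeg$ whose image is cofinal in $\WeiDeg$. I would then argue that $\Theta(\mathcal{S})$ is itself cofinal in $\WeiDeg$: given a Weihrauch degree $w$, cofinality of $\Theta(\MedZero)$ yields $a\in\MedZero$ with $w\weireducible\Theta(a)$, and cofinality of $\mathcal{S}$ in $\MedZero$ yields $s\in\mathcal{S}$ with $a\medvedevreducible s$; since $\Theta$ is order-preserving, $\Theta(a)\weireducible\Theta(s)$, hence $w\weireducible\Theta(s)$. But $|\Theta(\mathcal{S})|\le|\mathcal{S}|=\continuum$, so $\Theta(\mathcal{S})$ would be a cofinal subset of $\WeiDeg$ of size at most $\continuum$, contradicting \thref{thm:cof(W)>c}.

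There is no genuine obstacle here: the only ingredient beyond \thref{thm:cof(W)>c} is the elementary fact that $\MedZero$ has a cofinal subset of size $\continuum$, together with the trivial observation that the image of a cofinal set under a cofinal order-preserving embedding is again cofinal — which is precisely what lets the cardinality bound of \thref{thm:cof(W)>c} bite.
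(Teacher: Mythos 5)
Your argument is correct and is essentially the paper's proof: the paper likewise derives the corollary by combining $\setcofinality(\WeiDeg)>\continuum$ (\thref{thm:cof(W)>c}) with $\setcofinality(\MedZero)=\continuum$ (\thref{thm:cofinality_m0}), and the details you spell out (a cofinal subset of $\MedZero$ of size $\continuum$ pushes forward to a cofinal subset of $\WeiDeg$ of size at most $\continuum$) are exactly the intended ones.
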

\begin{proof}
    This follows from the previous result and the fact that $\setcofinality(\MedZero)=\continuum$ (\thref{thm:cofinality_m0}).
\end{proof}

\begin{theorem}
    \thlabel{thm:no_cofinal_chains_W}
    There are no cofinal chains in $\WeiDeg$.
\end{theorem}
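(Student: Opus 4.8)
The plan is to argue by contradiction. Suppose $\sequence{a_x}{x\in L}$ is a cofinal chain in $\WeiDeg$ for some linear order $L$. Since $\WeiDeg$ has no top element, $L$ can have no greatest element (otherwise the corresponding degree would be an upper bound for all of $\WeiDeg$), so $\kappa:=\cofinality(L)$ is a well-defined regular cardinal, witnessed by an order-preserving cofinal subsequence $\sequence{a_{x_\alpha}}{\alpha<\kappa}$; since $x\mapsto a_x$ is an order-isomorphism of $L$ onto the chain, $\alpha\le\beta$ implies $a_{x_\alpha}\weireducible a_{x_\beta}$. This subsequence is itself a cofinal subset of $\WeiDeg$, so by \thref{thm:cof(W)>c} we get $\kappa\ge\setcofinality(\WeiDeg)>\continuum$. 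In particular $\kappa$ is a regular cardinal with $\kappa\ge\continuum^+\ge\omega_2$, and hence $\cofinality(\kappa)=\kappa>\omega_1$.

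Next I would invoke \thref{thm:chain-no-upper-bound} to fix a chain $\sequence{f_\beta}{\beta<\omega_1}$ in $\WeiDeg$ with no upper bound. Using the cofinality of $\sequence{a_{x_\alpha}}{\alpha<\kappa}$ in $\WeiDeg$, choose for each $\beta<\omega_1$ an index $\alpha_\beta<\kappa$ with $f_\beta\weireducible a_{x_{\alpha_\beta}}$. Because $\cofinality(\kappa)>\omega_1$, the ordinal $\eta:=\sup\{\alpha_\beta \st \beta<\omega_1\}$ is still strictly below $\kappa$. Since the subsequence is order-preserving, $\alpha_\beta\le\eta$ yields $f_\beta\weireducible a_{x_{\alpha_\beta}}\weireducible a_{x_\eta}$ for every $\beta<\omega_1$, so $a_{x_\eta}$ is an upper bound for $\sequence{f_\beta}{\beta<\omega_1}$, contradicting the choice of that chain.

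The only genuinely load-bearing step, and the one I expect to require the most care, is the inequality $\eta<\kappa$: this is exactly where one needs $\kappa$ to have cofinality strictly above $\omega_1$, which is supplied precisely by the bound $\setcofinality(\WeiDeg)>\continuum$ of \thref{thm:cof(W)>c}. The weaker observation that no cofinal chain can be countable would not be enough here, since a cofinal chain of order type $\omega_1$ could not be defeated by an $\omega_1$-indexed family. Everything else is routine bookkeeping: that $L$ has no maximum, that a cofinal subsequence of order type the regular cardinal $\cofinality(L)$ exists and can be taken order-preserving, and that order-preservation lets us replace $a_{x_{\alpha_\beta}}$ by the single degree $a_{x_\eta}$.
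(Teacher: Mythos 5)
Your proposal is correct and follows essentially the same route as the paper: reduce to a well-ordered cofinal chain of regular order type $\kappa$, use \thref{thm:cof(W)>c} to get $\kappa>\continuum$, and then defeat cofinality with the unbounded $\omega_1$-chain from \thref{thm:chain-no-upper-bound}. The only difference is cosmetic (you phrase the final step via $\eta=\sup\alpha_\beta<\kappa$ giving an explicit upper bound, while the paper argues that the indices would have to be cofinal in $\kappa$).
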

\begin{proof}
    Observe first of all that every cofinal chain contains a well-ordered cofinal chain, hence, without loss of generality, we can restrict our attention to well-ordered chains.

    Let $\sequence{f_\beta}{\beta< \kappa}$ be a cofinal chain in $\WeiDeg$. Notice that if $\cofinality(\kappa)<\kappa$, then the chain $\sequence{f_\beta}{\beta< \kappa}$ contains a cofinal chain of size $\cofinality(\kappa)$. Assume therefore that $\kappa$ is a regular cardinal. Since $\setcofinality(\WeiDeg)>\continuum$ (\thref{thm:cof(W)>c}), $\kappa >\continuum$. Fix a chain $\sequence{g_\alpha}{\alpha<\omega_1}$ in $\WeiDeg$ with no upper bound (as in  \thref{thm:chain-no-upper-bound} or \thref{thm:long_chain_no_UB}). Since the chain $\sequence{f_\beta}{\beta<\kappa}$ is cofinal in $\WeiDeg$, for every $\alpha<\omega_1$ there is $\beta_\alpha < \kappa $ such that $g_\alpha \weireducible f_{\beta_\alpha}$. Since $\sequence{g_\alpha}{\alpha < \omega_1}$ has no upper bound, the sequence $\sequence{\beta_\alpha}{\alpha<\omega_1}$ must be cofinal in $\kappa$, contradicting the fact that $\kappa=\cofinality(\kappa)> \continuum$.
\end{proof}

Finally, we observe that the coinitiality of the Weihrauch degrees is closely
connected with the cofinality of the Medvedev degrees. We first highlight the
following fact:

\begin{proposition}
    \thlabel{thm:basis}
    There is a continuum-sized family $\family{f_p}{p\in\Baire}$ such that for every $p$, $f_p\weireducible \id$ and for every non-empty $g$, there is $p\in\Baire$ such that $f_p\weireducible g$.
\end{proposition}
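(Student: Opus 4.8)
The plan is to produce a single family $\family{f_p}{p \in \Baire}$ indexed by Baire space, each $f_p$ a restriction of $\id$, such that from any non-empty mass problem $g$ one can Weihrauch-reduce at least one $f_p$. The natural idea is to let $f_p$ ``encode'' the singleton $\{p\}$ and exploit the fact that, by \thref{thm:W0_M0} and the remark identifying $\MedvedevOp$ with the lower cone of $\id$, Weihrauch reductions into $g$ correspond to Medvedev reductions of $\dom(g)$. So I would first set, for each $p \in \Baire$, $f_p := \id_{\{p\}}$, i.e.\ the problem with domain $\{p\}$ and $f_p(p) = \{p\}$. Clearly $f_p \weireducible \id$ for every $p$ (it is a restriction of the identity). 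The family is continuum-sized, as required.

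Next I would verify the reduction property. Fix a non-empty $g \pmfunction{\Baire}{\Baire}$ and pick any $r \in \dom(g)$. I claim $f_r \weireducible g$. Take the forward functional $\Phi$ to be the constant map $p \mapsto r$ (well-defined as a computable functional on the single input $r$, since we only need $\Phi(r) \in \dom(g)$, and we may set $\Phi$ arbitrarily elsewhere), and take the backward functional $\Psi(r, q) := r$, the constant map returning $r$ regardless of the answer $q \in g(r)$. Then for the unique input $r \in \dom(f_r)$ we have $\Phi(r) = r \in \dom(g)$ and for every $q \in g(\Phi(r))$, $\Psi(r, q) = r \in \{r\} = f_r(r)$. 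Hence $f_r \weireducible g$, and $r \in \Baire$ is the witness demanded by the statement.

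The only subtlety — and the one point that needs care rather than being wholly routine — is that $\Phi$ and $\Psi$ must be computable functionals, and $r$ is an arbitrary (possibly highly non-computable) real; this seems to obstruct computing the constant maps $p \mapsto r$ and $(p,q) \mapsto r$. The resolution is the standard observation that a Weihrauch reduction with a one-point domain imposes no genuine computability constraint on that input: we only ever need $\Phi$ and $\Psi$ to behave correctly on the input $r$ (and on the pairs $(r,q)$), and we are free to let them compute $r$ by, e.g., querying the oracle and copying it verbatim — more precisely, $f_r$ being defined only at $r$, the reduction is witnessed by the functionals $\Phi(x) := x$ and $\Psi(x, q) := x$, which are trivially computable, since $\Phi(r) = r \in \dom(g)$ and $\Psi(r,q) = r \in f_r(r)$. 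This sidesteps the issue entirely: we never need to ``produce'' $r$ from nothing. With this observation the proof is complete, and I would write it in essentially the three short moves above: define the family, check $f_p \weireducible \id$, and check the reduction into an arbitrary non-empty $g$ via the copy-the-input functionals.
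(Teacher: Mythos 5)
Your proof is correct and takes essentially the same route as the paper's: both define $f_p$ on the one-point domain $\{p\}$ and reduce $f_r$ to any non-empty $g$ with $r\in\dom(g)$ via the identity/copy-the-input functionals. The only (immaterial) difference is that the paper sets $f_p(p):=\Baire$ rather than $\{p\}$, which makes the backward functional's correctness automatic and removes even the small subtlety you discuss.
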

\begin{proof}
    For every $p\in \Baire$, we define $f_p\mfunction{\{p\}}{\Baire}$ as $f_p(p):=\Baire$. It is trivial to see that $f_p\weireducible \id$ and, for every non-empty $g$ and for every $p\in \dom(g)$, $f_p\weireducible g$.
\end{proof}

\begin{theorem}
    \thlabel{thm:coinitial_chains_W}
    The set-coinitiality of $\WeiZero$ is $\continuum$. Moreover, the following are equivalent:
    \begin{enumerate}
        \item $\CH$;
        \item there is a coinitial chain in $\WeiZero$;
        \item there is a coinitial chain in $\WeiZero$ of order type
        $\omega_1$.
    \end{enumerate}
\end{theorem}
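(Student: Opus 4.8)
The plan is to derive the statement from the Medvedev-degree results of Section~2 together with the basis \thref{thm:basis}; almost all the work has already been done there. The central device is the family of basis problems: for $p\in\Baire$ let $f_p$ be as in \thref{thm:basis}, so $\dom(f_p)=\{p\}$ and $f_p(p)=\Baire$. The first thing I would record is the elementary dictionary that, for any non-empty $g$, $f_p\weireducible g$ holds precisely when $p$ computes some element of $\dom(g)$ (equivalently, $\dom(g)\medvedevreducible\{p\}$), because the output clause in the definition of $\weireducible$ is vacuous for $f_p$. Dually, for a non-empty $A\subseteq\Baire$ the problem $\id_A$ lies in $\WeiZero$, has domain $A$, and every $h\weireducible\id_A$ forces $A\medvedevreducible\dom(h)$. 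From this I extract the key transfer observation: if $\mathcal{G}\subseteq\WeiZero$ is coinitial, then $\{\dom(g)\st g\in\mathcal{G}\}$ is cofinal in $\MedZero$ (apply coinitiality to $\id_A$ for $A$ ranging over non-empty mass problems), and this set has size at most $|\mathcal{G}|$.

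With the transfer observation in hand, the value $\setcoinitiality(\WeiZero)=\continuum$ is immediate: the $\continuum$-sized family $\family{f_p}{p\in\Baire}$ is coinitial, so $\setcoinitiality(\WeiZero)\le\continuum$; and a coinitial $\mathcal{G}$ of size $<\continuum$ would push forward to a cofinal subset of $\MedZero$ of size $<\continuum$, contradicting $\setcofinality(\MedZero)=\continuum$ (\thref{thm:cofinality_m0}).

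For the equivalence, $(3)\Rightarrow(2)$ is trivial. For $(2)\Rightarrow(1)$ I would take a coinitial chain $\mathcal{C}$ in $\WeiZero$: by the transfer observation $\{\dom(f)\st f\in\mathcal{C}\}$ is cofinal in $\MedZero$, and since $\mathcal{C}$ is $\weireducible$-linearly ordered and $f\weireducible f'$ implies $\dom(f')\medvedevreducible\dom(f)$, these domains are pairwise $\medvedevreducible$-comparable; passing to Medvedev degrees gives a cofinal chain in $\MedZero$, hence $\CH$ by \thref{thm:cof_chain_med0}. For $(1)\Rightarrow(3)$, assuming $\CH$ I would fix, via \thref{thm:cof_chains_turing}, a cofinal and strictly $\turingreducible$-increasing chain $\family{d_\alpha}{\alpha<\omega_1}$ in $\TuringDeg$, and set $f_\alpha:=f_{d_\alpha}$. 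By the dictionary, for $\alpha<\beta$ we get $f_\beta\weireducible f_\alpha$ (since $d_\beta$ computes $d_\alpha\in\dom(f_\alpha)$) but $f_\alpha\not\weireducible f_\beta$ (since $d_\alpha$ does not compute $d_\beta$), so $\family{f_\alpha}{\alpha<\omega_1}$ is a strictly descending chain in $\WeiZero$; and it is coinitial, since given a non-empty $g$ any $r\in\dom(g)$ is computed by some $d_\alpha$ by Turing-cofinality, whence $f_\alpha\weireducible g$. This $\omega_1$-indexed chain is the coinitial chain required by~$(3)$.

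I do not expect a genuine obstacle, since the heavy lifting lives in Section~2; the single point that needs care is to carry out $(1)\Rightarrow(3)$ through the basis problems $f_p$ rather than through the identity-restrictions $\id_A$. The reduction $f_p\weireducible g$ only requires $p$ to compute an input for $g$ (there is nothing to return on the output side), which is exactly why a Turing-cofinal chain suffices to reach every non-empty $g$ and not merely the problems lying above $\id$. The other thing to keep straight is the order reversal in the $\dom$-correspondence, namely that coinitiality in $\WeiZero$ matches cofinality in $\MedZero$.
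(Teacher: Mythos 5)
Your proposal is correct and takes essentially the same route as the paper: both reduce the statement to \thref{thm:basis}, \thref{thm:cofinality_m0}, and \thref{thm:cof_chain_med0} via the correspondence between coinitial sets in $\WeiZero$ and cofinal sets in $\MedZero$ induced by $g\mapsto\dom(g)$ and $A\mapsto\id_A$ (your $f_p$ being Weihrauch-equivalent to $\id_{\{p\}}$). You merely spell out explicitly the transfer that the paper compresses into the sentence ``the existence of a coinitial chain in $\WeiZero$ is equivalent to the existence of a cofinal chain in $\MedZero$.''
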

\begin{proof}
    As a corollary of \thref{thm:basis}, the lower cone of $\id$ is a coinitial subset of $\WeiZero$. Since the lower cone of $\id$ is isomorphic to $\MedvedevOp$, and every coinitial set in $\WeiZero$ must, a fortiori, be coinitial in the lower cone of $\id$, we immediately have $\setcoinitiality(\WeiZero)=\setcofinality(\MedZero)=\continuum$.

    The second part of the statement is a simple consequence of \thref{thm:cof_chain_med0}, as the existence of a coinitial chain in $\WeiZero$ is equivalent to the existence of a cofinal chain in $\MedZero$.
\end{proof}

\section{Antichains}
\label{sec:antichains}

In this section, we prove some results on the size and extendibility of
antichains in the Weihrauch degrees. Clearly, given that the Medvedev degrees
embed as a lattice in the Weihrauch degrees, every antichain in the Medvedev
degrees immediately gives an antichain in the Weihrauch degrees. In
particular, since there are antichains of size $2^\continuum$ in
$\MedvedevDeg$ (\cite{Platek70}), we immediately obtain that the same holds
for $\WeiDeg$ as well. In fact, such antichains can be found ``everywhere''
in the Weihrauch lattice.

\begin{proposition}
    For every problem $f\neq\emptyset$, there is an antichain $\mathcal{A}$ in $\WeiDeg$ of size $2^\continuum$ with $f\in \mathcal{A}$.
\end{proposition}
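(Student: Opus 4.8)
The plan is to realize the antichain as $\mathcal{A}=\{f\}\cup\family{g_\alpha}{\alpha<2^\continuum}$, fixing the domains of the $g_\alpha$ in advance to form a large Medvedev antichain and then tailoring their ranges by a point-diagonalization. The key leverage is the implication ``$h\weireducible g$ forces $\dom(g)\medvedevreducible\dom(h)$'' recorded above: by making the domains a Medvedev antichain that avoids the lower cone of $\dom(f)$, two of the three families of non-reductions we need come for free. Concretely, I would fix some $p_1\in\dom(f)$ (possible since $f\neq\emptyset$) and a real $d$ with $d\not\turingreducible p_1$, and then use a $2^\continuum$-sized Medvedev antichain $\family{A_\alpha}{\alpha<2^\continuum}$ all of whose members compute $d$; such a family exists by relativizing the construction of \cite{Platek70} (or \cite[Thm.\ 4.1]{Sorbi1996}) to the oracle $d$ --- or, if one prefers the unrelativized construction, by replacing each antichain member $A$ with $\{d\oplus q\st q\in A\}$, which keeps the family an antichain and forces every member to compute $d$. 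For such $A_\alpha$ one has $A_\alpha\not\medvedevreducible\dom(f)$, since a computable functional taking $\dom(f)$ into $A_\alpha$ would send $p_1$ to a real computing $d$, contradicting $d\not\turingreducible p_1$.

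Setting $\dom(g_\alpha):=A_\alpha$ then gives, regardless of the ranges: for $\alpha\neq\beta$, any reduction $g_\alpha\weireducible g_\beta$ would yield $A_\beta\medvedevreducible A_\alpha$ and symmetrically, so the $g_\alpha$ are pairwise Weihrauch-incomparable and in particular represent $2^\continuum$ distinct degrees; and any reduction $f\weireducible g_\alpha$ would yield $A_\alpha\medvedevreducible\dom(f)$, which is false. So the only task left is to secure $g_\alpha\not\weireducible f$ for each $\alpha$, and I would do this by diagonalizing at a single input. Fix $q_\alpha\in A_\alpha$; for every pair $(\Phi_e,\Phi_i)$ of computable functionals such that $\Phi_e(q_\alpha)$ converges to an element of $\dom(f)$, choose $r_{e,i}\in f(\Phi_e(q_\alpha))$ and, if $\Phi_i(q_\alpha,r_{e,i})$ converges, place its value into a set $N_\alpha$. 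There are only countably many such pairs, so $N_\alpha$ is countable and $\Baire\setminus N_\alpha\neq\emptyset$; define $g_\alpha(q_\alpha):=\Baire\setminus N_\alpha$ and $g_\alpha(q):=\Baire$ for every other $q\in A_\alpha$, so that $\dom(g_\alpha)=A_\alpha$ as required.

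The verification that $g_\alpha\not\weireducible f$ is then short. If the reduction were witnessed by $(\Phi_e,\Phi_i)$, then, since $q_\alpha\in\dom(g_\alpha)$, we would have $\Phi_e(q_\alpha)\in\dom(f)$ and $\Phi_i(q_\alpha,r)\in g_\alpha(q_\alpha)$ for every $r\in f(\Phi_e(q_\alpha))$; taking $r=r_{e,i}$, the value $\Phi_i(q_\alpha,r_{e,i})$ would be defined and would lie in $g_\alpha(q_\alpha)=\Baire\setminus N_\alpha$, contradicting that it was placed in $N_\alpha$. Hence $f\weiincomparable g_\alpha$ for all $\alpha$, and $\mathcal{A}$ is an antichain of size $2^\continuum$ containing $f$.

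The step I expect to be the main obstacle --- and the reason the argument is not simply ``embed a Medvedev antichain incomparable with $\dom(f)$'' --- is the handling of those $f$ for which $\dom(f)$ is Medvedev-comparable with every mass problem, i.e.\ when $\dom(f)$ is Medvedev-trivial or Medvedev-equivalent to the atom $\medvedevsucc{0^\mathbb{N}}$. In those cases no mass problem is Medvedev-incomparable with $\dom(f)$, so one is forced to settle for the weaker domain condition $A_\alpha\not\medvedevreducible\dom(f)$ (which still blocks $f\weireducible g_\alpha$) and to pay for $g_\alpha\not\weireducible f$ with the range diagonalization above; this is precisely what makes the construction work uniformly for every nonempty $f$. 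A secondary point to get right is the relativized form of \cite{Platek70} invoked in the first step --- that its antichain members may be taken to compute the oracle --- which the ``$\{d\oplus q\}$'' replacement settles, using that relativized-Platek antichains remain antichains modulo the oracle.
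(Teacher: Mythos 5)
Your proof is correct and follows essentially the same strategy as the paper's: take for the domains a $2^\continuum$-sized Medvedev antichain none of whose members reduces to $\dom(f)$ (the paper relativizes the Platek/Sorbi construction to a $p$ with $\dom(f)\strictlymedvedevreducible\{p\}$, you to an oracle $d\not\turingreducible p_1$), so that $f\not\weireducible g_\alpha$ and the pairwise incomparabilities come for free from the domains, and then diagonalize the ranges to kill $g_\alpha\weireducible f$ (the paper via \thref{thm:diag} spread over countably many inputs, you at a single input by excising a countable set $N_\alpha$ of values). One caveat: your fallback of replacing $A$ by $\{d\oplus q\st q\in A\}$ does not obviously preserve the antichain property of the unrelativized Platek family (a reduction between the modified sets only yields an element of $A$ computable from $d\oplus q$, which the minimality-over-$\emptyset$ argument cannot exploit), so you should rely on your primary route of relativizing the whole construction to the oracle $d$.
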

\begin{proof}
    Fix a problem $f$ and let $p\in \Baire$ be such that $\dom(f)\strictlymedvedevreducible \{p\}$. Let also $\{p_\alpha \st \alpha<\continuum\}$ be the set of minimal degrees above $p$. As in \cite{Platek70} (see also \cite[Thm.\ 4.1]{Sorbi1996}), we can define a $\medvedevreducible$-antichain $\family{A_\beta}{\beta<2^\continuum}$ where $A_\beta \subset \{p_\alpha \st \alpha<\continuum\}$. For every $\beta < 2^\continuum$, define $g_\beta$ as the problem obtained applying \thref{thm:diag} to $\mathcal{F}=\{f\}$ and $A=\{ \str{e,i}\concat p \st e,i\in\mathbb{N} \text{ and } p \in A_\beta \}$. In particular, this guarantees that $g_\beta\not\weireducible f$.

    To conclude the proof, observe that, for every $\beta \neq \gamma$, $g_\beta \weiincomparable g_{\gamma}$ because $\dom(g_\beta) \medvedevequiv A_\beta\medvedevincomparable A_{\gamma} \medvedevequiv \dom(g_{\gamma})$. Similarly, $f \not\weireducible g_\beta$ because $A_\beta \not\medvedevreducible \dom(f)$.
\end{proof}

At the same time, it is trivial to observe that the image of a maximal
antichain in $\MedvedevDeg$ need not be maximal in $\WeiDeg$. In fact, while there are finite maximal antichains in $\MedvedevDeg$ (\thref{thm:max_antichain_M}), 
Dzhafarov, Lerman, Patey, and Solomon proved:

\begin{proposition}[\cite{DamirLuminy}]
    For every countable family $\family{f_n}{n\in\mathbb{N}}$ of non-trivial problems, there is $g$ such that for every $n$, $g \weiincomparable f_n$. \qed
\end{proposition}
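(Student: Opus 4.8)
The plan is to produce a single problem $g$ that is Weihrauch‑incomparable with every $f_n$, by using the diagonalization of \thref{thm:diag} to defeat all reductions $g\weireducible f_n$ and a separate, purely ``domain‑theoretic'' trick to defeat all reductions $f_n\weireducible g$. The reason these two tasks do not conflict is that the construction in \thref{thm:diag} fixes the domain $A$ of the problem being built in advance and only ever specifies, for each input $a\in A$, whether $g(a)=\Baire$ or $g(a)=\Baire\setminus\{s_a\}$ for a single point $s_a$; it never adds to or removes from the domain. So the domain of $g$ is a free parameter, and we spend it entirely on the reductions into $g$.

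Concretely, for each $n$ fix some $p_n\in\dom(f_n)$; this is possible because a non-trivial problem has non-empty domain. Put
\[ F:=\{\,\Phi_e(p_n)\st e,n\in\mathbb{N}\ \text{and}\ \Phi_e(p_n)\ \text{is defined}\,\}, \]
which is a countable subset of $\Baire$ (at most one point for each pair $(e,n)$), and choose a countably infinite $A\subseteq\Baire\setminus F$. The first step is then to observe that \emph{no} problem $h$ with $\dom(h)=A$ satisfies $f_n\weireducible h$: a reduction $f_n\weireducible h$ via computable functionals $\Phi_e,\Phi_i$ would require $\Phi_e(p_n)\in\dom(h)=A$, but $\Phi_e(p_n)$ is either undefined or an element of $F$, and $F\cap A=\emptyset$. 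Hence $f_n\not\weireducible h$ for every $n$, regardless of the solution sets of $h$.

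Next I would apply \thref{thm:diag} to the family $\mathcal{F}:=\{\,f_n\st n\in\mathbb{N}\,\}$, which has size at most $\omega\le\continuum$, and to the infinite set $A$, which satisfies $|A|=\omega\ge|\mathcal{F}|$. This yields a problem $g\mfunction{A}{\Baire}$ with $g\not\weireducible f_n$ for every $n$. Since $\dom(g)=A$, the observation of the previous paragraph applies to $g$ and gives $f_n\not\weireducible g$ for every $n$ as well. Therefore $g\weiincomparable f_n$ for all $n$, as required.

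I do not expect a genuine obstacle: the entire point is the recognition that \thref{thm:diag}'s construction leaves the domain untouched, so the two families of negative requirements can be satisfied independently — one by diagonalizing the solution sets, the other by making $\dom(g)$ avoid the countably many values a computable functional could assign to a fixed input in $\dom(f_n)$. The only routine verifications are that $F$ is countable (hence an appropriate $A$ exists) and that \thref{thm:diag} indeed delivers a $g$ whose domain is exactly the prescribed set $A$.
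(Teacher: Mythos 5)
Your argument is correct. Note that the paper itself gives no proof of this proposition---it is quoted from \cite{DamirLuminy} with a \qed---so there is no in-paper argument to compare against line by line. What you have written is, however, essentially the same two-pronged strategy the paper uses for its own extendibility results (\thref{thm:extending_antichain_wei_1} and the proposition on $2^\continuum$-sized antichains): defeat $g\weireducible f_n$ by the diagonalization of \thref{thm:diag}, and defeat $f_n\weireducible g$ purely through the choice of $\dom(g)$, which \thref{thm:diag} leaves as a free parameter. Where the paper's \thref{thm:extending_antichain_wei_1} gets the domain obstruction abstractly, from the non-cofinality of $\family{\dom(f_\alpha)}{\alpha<\continuum}$ in $\MedZero$ (which for a countable family is automatic by \thref{thm:cofinality_m0}), you instead build the obstruction by hand: picking $p_n\in\dom(f_n)$ and choosing $A$ disjoint from the countable set $\{\Phi_e(p_n)\st e,n\in\mathbb{N}\}$ is exactly a direct verification that $A\not\medvedevreducible\dom(f_n)$ for every $n$, i.e.\ that countably many mass problems are never cofinal in $\MedZero$. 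Your version is more elementary and self-contained; the paper's machinery buys the stronger statement for continuum-sized antichains with non-cofinal domains. One small point worth making explicit in a write-up: ``non-trivial'' is used only to guarantee $\dom(f_n)\neq\emptyset$ so that $p_n$ exists, and the $g$ produced automatically has non-empty domain, so incomparability is genuine.
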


In particular, this implies that every countable antichain in $\WeiDeg$ is not maximal, hence the analog of \thref{thm:max_antichain_M} does not hold for the Weihrauch degrees.

However, observe that the previous proposition cannot be extended to
$\continuum$-sized families. This is an immediate consequence of
\thref{thm:basis}. At the same time, the set of problems defined in the proof
of \thref{thm:basis} is far from being an antichain. This is because no
antichain can be coinitial in $\WeiZero$, as the bottom $\emptyset$ is
meet-irreducible (in other words, the meet of any two elements of the
antichain is always a non-empty problem that is not above any element of the
antichain).

We now provide some sufficient conditions that guarantee the extendibility of
an antichain in $\WeiDeg$. For the sake of the presentation, we first state
and prove the following \thref{thm:extending_antichain_wei_1}, while weaker
sufficient conditions will be provided in
\thref{thm:extending_antichain_wei_2}.

\begin{theorem}
    \thlabel{thm:extending_antichain_wei_1} 
    Let $\kappa\le \continuum$. If $\family{f_\alpha}{\alpha<\kappa}$ is an antichain in $\WeiDeg$ of non-trivial problems such that $\family{\dom(f_\alpha)}{\alpha<\kappa}$ is not cofinal in $\MedZero$, then there is $g$ such that for every $\alpha<\kappa$, $g \weiincomparable f_\alpha$.
\end{theorem}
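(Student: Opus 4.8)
The plan is to build $g$ by a single diagonalization that simultaneously achieves $g \not\weireducible f_\alpha$ for all $\alpha$ and $f_\alpha \not\weireducible g$ for all $\alpha$. The first half is exactly what \thref{thm:diag} delivers: applying it to the family $\mathcal{F} = \family{f_\alpha}{\alpha<\continuum}$ and a suitable continuum-sized domain $A \subseteq \Baire$ produces a $g\mfunction{A}{\Baire}$ with $g \not\weireducible f_\alpha$ for every $\alpha$. The content of the theorem is that, using the hypothesis that $\family{\dom(f_\alpha)}{\alpha<\continuum}$ is not cofinal in $\MedZero$, we can choose the domain $A$ so that the reductions $f_\alpha \weireducible g$ also all fail.

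First I would unpack the non-cofinality hypothesis: there is a non-empty mass problem $B$ such that $B \not\medvedevreducible \dom(f_\alpha)$ for every $\alpha<\continuum$. Since $|B| \geq \continuum$ (every non-cofinal mass problem in $\MedZero$ is uncountable — indeed if $B$ were countable it would be Medvedev-equivalent to a singleton, hence below some $\dom(f_\alpha)$, unless the family misses all singletons, which would contradict cofinality in a different way; in any case one can take $B$ uncountable without loss of generality), we can pick $A \subseteq \Baire$ with $|A| = \continuum$ carefully: I will arrange that $\dom(g) = A$ codes $B$ in the sense that $A \medvedevequiv B$, e.g.\ by taking $A := \{ \str{e,i}\concat p \st e,i\in\mathbb{N}, p\in B\}$, which is Medvedev-equivalent to $B$. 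Now run \thref{thm:diag} with this $A$ and $\mathcal{F} = \family{f_\alpha}{\alpha}$ to get $g\mfunction{A}{\Baire}$ with $g \not\weireducible f_\alpha$ for all $\alpha$.

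It remains to check $f_\alpha \not\weireducible g$ for every $\alpha$. This is the easy direction given our choice of domain: recall from the background section that $f_\alpha \weireducible g$ implies $\dom(g) \medvedevreducible \dom(f_\alpha)$, i.e.\ $A \medvedevreducible \dom(f_\alpha)$. But $A \medvedevequiv B$ and $B \not\medvedevreducible \dom(f_\alpha)$ by the choice of $B$, a contradiction. Hence $f_\alpha \not\weireducible g$ for all $\alpha$, so $g \weiincomparable f_\alpha$ for every $\alpha<\continuum$, as required.

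The main obstacle is the bookkeeping around cardinalities and the precise reason $B$ can be taken to have size $\geq |\mathcal{F}| = \continuum$, which is what \thref{thm:diag} requires of the domain $A$: one must argue that a mass problem witnessing non-cofinality in $\MedZero$ cannot be countable (a countable mass problem lies below a singleton, and the family of singletons reducible to any fixed member is only countably many — but if the family $\family{\dom(f_\alpha)}{\alpha}$ were cofinal it would have to be cofinal among singletons, forcing $\continuum$-many of the $\dom(f_\alpha)$ to be singleton-equivalent, which one can rule out or absorb). Once the domain is correctly sized and Medvedev-equivalent to a fixed non-reducible $B$, both halves of the incomparability follow cleanly — the first from \thref{thm:diag} and the second from the order-reversing relation between $\weireducible$ and the Medvedev order on domains.
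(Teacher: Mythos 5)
Your overall strategy is exactly the paper's: take a witness $B$ to the non-cofinality of the domains, arrange $\dom(g)\medvedevequiv B$ so that every reduction $f_\alpha\weireducible g$ is blocked by $\dom(g)\not\medvedevreducible\dom(f_\alpha)$, and invoke \thref{thm:diag} on the whole family to kill all reductions $g\weireducible f_\alpha$. Both halves of the incomparability are argued correctly.

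The one step that fails as written is your handling of the cardinality hypothesis of \thref{thm:diag}, which requires $|A|\ge|\mathcal{F}|=\continuum$. The padding $A:=\{\str{e,i}\concat p \st e,i\in\mathbb{N},\ p\in B\}$ is Medvedev-equivalent to $B$ but has the same cardinality as $B$ up to a countable factor, so it does not yield a continuum-sized domain when $B$ is small. Your fallback claim --- that a witness to non-cofinality must be uncountable because a countable mass problem is Medvedev-equivalent to a singleton --- is false on both counts: a two-element set $\{p,q\}$ with $p\turingincomparable q$ is equivalent to $\{p\}\wedge\{q\}$, which is not a degree of solvability (a singleton $\{r\}$ equivalent to it would force $r\turingreducible p$, $r\turingreducible q$, and $p\turingreducible r$ or $q\turingreducible r$, hence comparability of $p$ and $q$); and a singleton can perfectly well witness non-cofinality (e.g.\ if every $\dom(f_\alpha)$ contains a computable point, any non-computable $\{q\}$ works). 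The repair is immediate and is what the paper does: replace $B$ by $B\vee\Baire=\{\pairing{p,q}\st p\in B,\ q\in\Baire\}$, which is Medvedev-equivalent to $B$ (since $\Baire$ is the bottom of $\MedvedevDeg$) and has cardinality $\continuum$ whenever $B\neq\emptyset$. With that one-line fix your argument coincides with the paper's proof.
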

\begin{proof}
    Since the family $\family{\dom(f_\alpha)}{\alpha<\kappa}$ is not cofinal in $\MedZero$, there is a non-empty $D\subset \Baire$ such that for every $\alpha<\kappa$, $D\not \medvedevreducible \dom(f_\alpha)$. Without loss of generality, we can assume that $|D|=\continuum$ (as $D\medvedevequiv D\times \Baire$).

    Thus, if $g$ is a problem with $\dom(g)=D$, then for every $\alpha<\kappa$, $f_\alpha\not\weireducible g$. To conclude the proof, it is enough to define $g$ as the problem obtained applying \thref{thm:diag} to $\mathcal{F}=\family{f_\alpha}{\alpha<\kappa}$ and $A=D$.
\end{proof}

\begin{corollary}
    \thlabel{thm:<c_extendible}
    No antichain $\family{f_\alpha}{\alpha<\kappa}$ in $\WeiDeg$ with $1<\kappa<\continuum$ is maximal.
\end{corollary}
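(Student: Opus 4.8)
The plan is to derive this directly from the proof of \thref{thm:extending_antichain_wei_1}, combined with the computation $\setcofinality(\MedZero)=\continuum$. Let $\family{f_\alpha}{\alpha<\kappa}$ be an antichain in $\WeiDeg$ with $\kappa<\continuum$. First I would observe that the associated family of domains $\family{\dom(f_\alpha)}{\alpha<\kappa}$ has size at most $\kappa<\continuum=\setcofinality(\MedZero)$ by \thref{thm:cofinality_m0}, and hence cannot be cofinal in $\MedZero$. Consequently there is a mass problem $D\subseteq\Baire$ with $D\not\medvedevreducible\dom(f_\alpha)$ for every $\alpha<\kappa$; replacing $D$ by $D\times\Baire$ if necessary, we may additionally assume that $|D|=\continuum$.

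Next I would simply rerun the argument of \thref{thm:extending_antichain_wei_1}, noting that it only ever used that the antichain has size at most $\continuum$. Applying \thref{thm:diag} to $\mathcal{F}=\family{f_\alpha}{\alpha<\kappa}$ (which has size $\le\kappa\le\continuum$) and $A=D$ (which has size $\continuum\ge|\mathcal{F}|$) yields a problem $g\mfunction{D}{\Baire}$ with $g\not\weireducible f_\alpha$ for every $\alpha<\kappa$. Conversely, $f_\alpha\weireducible g$ would force $\dom(g)=D\medvedevreducible\dom(f_\alpha)$, contradicting the choice of $D$, so $f_\alpha\not\weireducible g$ as well. Hence $g\weiincomparable f_\alpha$ for every $\alpha<\kappa$, so $\family{f_\alpha}{\alpha<\kappa}$ is not maximal.

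There is no genuine obstacle here: the only point worth stressing is that \thref{thm:extending_antichain_wei_1} was stated for $\continuum$-indexed antichains purely for definiteness, while for $\kappa<\continuum$ the non-cofinality hypothesis is automatic from \thref{thm:cofinality_m0}; the rest is a verbatim repetition of that proof.
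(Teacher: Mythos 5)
Your proof is correct and follows essentially the same route as the paper: the paper's own proof of this corollary is exactly the observation that a family of fewer than $\continuum$ domains cannot be cofinal in $\MedZero$ by \thref{thm:cofinality_m0}, so \thref{thm:extending_antichain_wei_1} (whose argument works verbatim for antichains of size at most $\continuum$) applies. Your additional care in checking the hypotheses of \thref{thm:diag} is fine but not a departure from the paper's argument.
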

\begin{proof}
    This follows from the fact that no family of size $<\continuum$ is cofinal in $\MedZero$ (\thref{thm:cofinality_m0}). There is a trivial maximal antichain in $\WeiDeg$ of size $1$, namely $\{\emptyset\}$.
\end{proof}

Observe also that no antichain $\mathcal{A}$ can be cofinal in $\MedZero$.
Indeed, since there is no maximal element in $\MedZero$, for every $A\in
\mathcal{A}$ there is a non-empty mass problem $B$ with $A
\strictlymedvedevreducible B$. In particular, since $\mathcal{A}$ is an
antichain, for every $C\in \mathcal{A}$ we have $B \not\medvedevreducible C$.
This simple observation, together with \thref{thm:extending_antichain_wei_1},
immediately yields:

\begin{corollary}
    For every antichain $\mathcal{F}$ in $\WeiDeg$ with $1<|\mathcal{F}|\le \continuum$, if $\{\dom(f) \st f\in \mathcal{F}\}$ is an antichain in $\MedvedevDeg$, then $\mathcal{F}$ is not maximal. \qed
\end{corollary}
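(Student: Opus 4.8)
The plan is to read this off from \thref{thm:extending_antichain_wei_1} together with the observation, recorded just before the statement, that no antichain can be cofinal in $\MedZero$. First I would split according to the size of $\mathcal{F}$: if $|\mathcal{F}|<\continuum$, then the conclusion is already \thref{thm:<c_extendible} and the hypothesis on the domains is not needed, so the only case requiring any work is $|\mathcal{F}|=\continuum$.

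So assume $|\mathcal{F}|=\continuum$, fix an enumeration $\mathcal{F}=\family{f_\alpha}{\alpha<\continuum}$, and put $D_\alpha:=\dom(f_\alpha)$. Since $|\mathcal{F}|\ge 2$ and $\mathcal{F}$ is an antichain, no $f_\alpha$ is the bottom degree, hence each $D_\alpha$ is a non-empty mass problem and $\family{D_\alpha}{\alpha<\continuum}$ is a family in $\MedZero$. By hypothesis, the corresponding set of Medvedev degrees is an antichain in $\MedvedevDeg$, hence in $\MedZero$; by the observation recalled above (above each member of such an antichain there is a strictly larger non-empty mass problem, which cannot lie below any member of the antichain), it is not cofinal in $\MedZero$. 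Consequently $\family{D_\alpha}{\alpha<\continuum}$ is not cofinal in $\MedZero$, which is exactly the hypothesis of \thref{thm:extending_antichain_wei_1}; that theorem then yields a problem $g$ with $g\weiincomparable f_\alpha$ for every $\alpha<\continuum$. Therefore $\mathcal{F}\cup\{g\}$ is an antichain properly containing $\mathcal{F}$, so $\mathcal{F}$ is not maximal.

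Since this is a corollary, there is essentially no obstacle; the only point that deserves a moment of care is the bookkeeping in the second paragraph, namely verifying that the domains of the (necessarily non-trivial) members of $\mathcal{F}$ are genuinely non-empty, so that the antichain they determine in $\MedvedevDeg$ actually lives in $\MedZero$ and the ``no antichain is cofinal in $\MedZero$'' observation applies. Everything else is a direct appeal to \thref{thm:<c_extendible} and \thref{thm:extending_antichain_wei_1}.
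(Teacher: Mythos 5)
Your proposal is correct and is essentially the paper's intended argument: the paper states this corollary with no written proof precisely because it follows immediately from the preceding observation that no antichain is cofinal in $\MedZero$ together with Theorem \ref{thm:extending_antichain_wei_1} (and Corollary \ref{thm:<c_extendible} for the case $|\mathcal{F}|<\continuum$). Your extra bookkeeping about the domains being non-empty is a harmless elaboration of the same route.
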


We conclude this section by stating and proving the following generalization
of \thref{thm:extending_antichain_wei_1}. In light of \thref{thm:cofinality_m0}, it is enough to consider antichains of size $\continuum$.

\begin{theorem}
    \thlabel{thm:extending_antichain_wei_2}
    Let $\family{f_\alpha}{\alpha<\continuum}$ be an antichain in $\WeiDeg$ of non-trivial problems and let $D_\alpha:=\dom(f_\alpha)$. If the set $\mathcal{B}:=\{ D_\beta \st (\forall x\in D_\beta)(f_\beta(x) \text{ has a } x\text{-computable element})\}$ is not cofinal in $\MedZero$, 
    then $\family{f_\alpha}{\alpha<\continuum}$ is not maximal.
\end{theorem}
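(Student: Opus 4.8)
The plan is to refine the proof of \thref{thm:extending_antichain_wei_1}, splitting the indices $\alpha$ according to whether $D_\alpha$ lies in $\mathcal{B}$. Since $\mathcal{B}$ is not cofinal in $\MedZero$, I would fix a non-empty $D\subseteq\Baire$ with $D\not\medvedevreducible D_\beta$ for every $D_\beta\in\mathcal{B}$; replacing $D$ by $D\vee\Baire$ (which is Medvedev-equivalent and has size $\continuum$) I may assume $|D|=\continuum$, and note that when $\mathcal{B}\neq\emptyset$ this $D$ has no computable point, since $\Baire$ is the bottom of $\MedvedevDeg$. Then I would apply \thref{thm:diag} to $\mathcal{F}:=\family{f_\alpha}{\alpha<\continuum}$ and $A:=D$, obtaining $g\mfunction{D}{\Baire}$ with $g\not\weireducible f_\alpha$ for every $\alpha<\continuum$. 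The feature of that construction I want to retain is that each value $g(p)$ is either $\Baire$ or $\Baire$ minus a single point, so in particular $g(p)$ always contains a computable point.

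It then remains to check that $f_\alpha\not\weireducible g$ for every $\alpha<\continuum$: combined with $g\not\weireducible f_\alpha$ this makes $\family{f_\alpha}{\alpha<\continuum}\cup\{g\}$ an antichain that properly extends $\family{f_\alpha}{\alpha<\continuum}$, which is the conclusion. So suppose $f_\alpha\weireducible g$ via computable functionals $\Phi,\Psi$. If $D_\alpha\in\mathcal{B}$, then $\Phi$ maps $D_\alpha=\dom(f_\alpha)$ into $\dom(g)=D$, witnessing $D\medvedevreducible D_\alpha$ and contradicting the choice of $D$. If $D_\alpha\notin\mathcal{B}$, I would fix $x_\alpha\in D_\alpha$ for which $f_\alpha(x_\alpha)$ contains no solution computable from $x_\alpha$; then $\Phi(x_\alpha)\in\dom(g)$, and picking any computable $q_0\in g(\Phi(x_\alpha))$ yields $\Psi(x_\alpha,q_0)\in f_\alpha(x_\alpha)$ with $\Psi(x_\alpha,q_0)\turingreducible x_\alpha\oplus q_0\turingequiv x_\alpha$, contradicting the choice of $x_\alpha$.

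The delicate case is $D_\alpha\notin\mathcal{B}$: unlike in \thref{thm:extending_antichain_wei_1}, here $D\medvedevreducible D_\alpha$ may well hold, so a reduction $f_\alpha\weireducible g$ cannot be ruled out at the level of domains. What rescues the argument is precisely that the $g$ furnished by \thref{thm:diag} always admits a computable solution: feeding such a solution into the backward functional $\Psi$ converts a putative reduction into a solution of $f_\alpha(x_\alpha)$ computable from $x_\alpha$, which is exactly what the choice of $x_\alpha$ forbids. The remaining points --- that the enlargement $D\mapsto D\vee\Baire$ does no harm, and that $|D|\geq|\mathcal{F}|$ so \thref{thm:diag} applies --- are routine.
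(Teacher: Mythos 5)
Your argument is correct and takes a genuinely leaner route than the paper's. The paper first reduces to the case where $\family{D_\alpha}{\alpha<\continuum}$ is itself cofinal in $\MedZero$ (invoking \thref{thm:extending_antichain_wei_1} otherwise), introduces the antichain $\mathcal{C}$ of $\medvedevreducible$-minimal domains in order to choose a suitable $D_\gamma$ \emph{inside the family}, and then builds a bespoke single-valued $g\function{D}{\mathbb{N}}$ by a hand-rolled diagonalization; the point of mapping into $\mathbb{N}$ is that every solution of $g$ is then computable. You instead take an arbitrary $D$ witnessing the non-cofinality of $\mathcal{B}$ and reuse \thref{thm:diag} off the shelf. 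What makes this work is your observation that one does not need \emph{all} solutions of $g$ to be computable, only that each solution set $g(p)$ \emph{contain} a computable point: since the backward functional of a putative reduction $f_\alpha\weireducible g$ must succeed on every solution, it can be fed a computable one. This lets you dispense with both the $\mathcal{C}$-argument and the custom construction, while the final case analysis (the domain argument when $D_\alpha\in\mathcal{B}$, the computable-solution argument when $D_\alpha\notin\mathcal{B}$) is the same as the paper's. One point deserves attention: you choose $x_\alpha$ so that $f_\alpha(x_\alpha)$ has no solution \emph{computable from} $x_\alpha$, whereas $D_\alpha\notin\mathcal{B}$ as literally defined only yields an $x$ such that $f_\alpha(x)$ has no computable solution; your contradiction (like the paper's one-line justification of the same case) genuinely requires the relative reading, because a reduction to $g$ only produces an element of $f_\alpha(x_\alpha)$ computable \emph{from} $x_\alpha$. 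This is a subtlety your proof shares with the paper's rather than a defect of your approach, but it should be made explicit.
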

\begin{proof}
    Assume first of all that $\family{D_\alpha}{\alpha<\continuum}$ is a cofinal set in $\MedZero$ (otherwise, the claim follows by \thref{thm:extending_antichain_wei_1}). Let $\mathcal{C}:=\{ D_\delta \st (\forall \alpha)( D_\alpha \not\medvedevequiv D_\delta \rightarrow D_\alpha \not\medvedevreducible D_\delta) \}$ be the set of $\medvedevreducible$-minimal elements in $\family{D_\alpha}{\alpha<\continuum}$.

    Fix $\gamma$ so that $D_\gamma \notin \mathcal{C}$ and for every $D_\beta\in\mathcal{B}$, $D_\gamma\not\medvedevreducible D_\beta$. Observe that such $\gamma$ exists: since $\family{D_\alpha}{\alpha<\continuum}$ is a cofinal set in $\MedZero$, $\mathcal{C}$ cannot be cofinal in $\family{D_\alpha}{\alpha<\continuum}$, as the $\medvedevreducible$-minimal elements in $\family{D_\alpha}{\alpha<\continuum}$ form a $\medvedevreducible$-antichain, and no antichain can be cofinal in $\MedZero$. This implies that $\mathcal{B}\cup \mathcal{C}$ is not cofinal in $\MedZero$ (in an upper semilattice, the union of two non-cofinal sets is not cofinal), and hence we can find $\gamma$ as above.

    Let $D\medvedevequiv D_\gamma$ be such that $|D|=\continuum$, and fix a bijection $\varphi\function{\mathbb{N}\times D_\gamma\times \continuum \times \mathbb{N}\times \mathbb{N}}{D}$. We define the problem $g\function{D}{\mathbb{N}}$ as follows: fix $p\in D_\gamma$, $\alpha<\continuum$, and $e, i \in \mathbb{N}$. If, for every $n\in\mathbb{N}$, $\Phi_e(\varphi(n,p,\alpha,e,i))\downarrow \in D_\alpha$ and there is $b=b(n)$ such that
    \[ (\forall y \in f_\alpha\circ\Phi_e\circ\varphi(n,p,\alpha,e,i))(\Phi_i(\varphi(n,p,\alpha,e,i), y) = b(n)),\]
    then define $g(\varphi(n,p,\alpha,e,i)):=1-b(n)$. Otherwise, for every $n$, we define $g(\varphi(n,p,\alpha,e,i)):=0$.

    We now show that, for every $\alpha<\continuum$, $g\weiincomparable f_\alpha$. We first show that for every $\alpha$, $f_\alpha \not \weireducible g$. Observe that if $D_\alpha \notin \mathcal{B}$, then $f_\alpha$ has an input $x$ with no $x$-computable solution (while $g$ only has computable solutions), hence $f_\alpha\not\weireducible g$. If $D_\alpha \in \mathcal{B}$, then $f_\alpha\not\weireducible g$ follows immediately from $\dom(g)\medvedevequiv D_\gamma\not\medvedevreducible D_\alpha$.

    It remains to prove that for every $\alpha$, $g\not\weireducible f_\alpha$. Fix $\alpha$ and assume that $g\weireducible f_\alpha$ via $\Phi_e,\Phi_i$. Observe that, if for every $n\in\mathbb{N}$ there is $b=b(n)$ such that for every $y\in f_\alpha\circ \Phi_e\circ \varphi(n,p,\alpha,e,i)$, $\Phi_i(\varphi(n,p,\alpha,e,i),y)=b(n)$, then by construction, $g(\varphi(n,p,\alpha,e,i))\neq b(n)$. On the other hand, let $n\in\mathbb{N}$ and $y_1,y_2\in f_\alpha\circ \Phi_e\circ\varphi(n,p,\alpha,e,i)$ be such that $\Phi_i(\varphi(n,p,\alpha,e,i),y_1)\neq \Phi_i(\varphi(n,p,\alpha,e,i),y_2)$. Since $g$ is single-valued, at least one of the two produced solutions is incorrect, and this concludes the proof.
\end{proof}

\section{Open questions}

In this paper, we explored some questions on the existence of chains and
antichains in the Weihrauch degrees, but many more remain open and require
further investigation. In Section~\ref{sec:chains}, we provided a
characterization of when ``long'' chains are extendible
(\thref{thm:no_upper_bound_wo}). However, \thref{thm:long_chain_no_UB} and
\thref{thm:chain_with_sup} only provide explicit examples of chains of size
$\continuum$. While it follows from the results on the Medvedev degrees that
the existence of a chain of size $2^\continuum$ is consistent with $\ZFC$, a
natural question is the following:

\begin{open}
    Under $\ZFC$, is there a chain of size $2^\continuum$ in $\WeiDeg$? More generally, is every chain of size $\kappa<2^\continuum$ extendible?
\end{open}

Likewise, while we established a connection between chains with no lower
bound in $\WeiZero$ and chains with no upper bound in $\MedZero$
(\thref{thm:W0_M0}), this only applies to well-founded sequences. It is known
that there are no well-founded sequences in $\MedvedevDeg$ of size
$2^\continuum$ (as a consequence of \thref{thm:long_chains_medvedev}). This
suggests the following question:

\begin{open}
    What are the possible order types of chains in $\WeiZero$ with no lower bound? Is there an order type $L$ such that the existence of a chain in $\WeiZero$ of order type $L$ with no lower bound is independent of $\ZFC$?
\end{open}

Another interesting question concerns the chains of order type
$\omega+\omega^*$. \thref{thm:intermediate_degree} characterizes when two
families of problems $\family{f_x}{x\in P}$ and $\family{h_z}{z\in Q}$ have
an intermediate degree (i.e., some $g$ such that, for every $x$ and $z$,
$f_x\weireducible g\weireducible h_z$). However, this only applies to
families with uncountable cofinality/coinitiality.

\begin{open}
    Is there an $\omega+\omega^*$ chain in $\WeiDeg$ with no intermediate degree?
\end{open}

In Section~\ref{sec:cofinality_coinitiality}, we showed that there are no
cofinal chains in $\WeiDeg$ (\thref{thm:no_cofinal_chains_W}) and that the
existence of coinitial chains is equivalent to $\CH$
(\thref{thm:coinitial_chains_W}). While the set-coinitiality of $\WeiZero$ is
$\continuum$, we only obtained a lower bound for its set-cofinality.

\begin{open}
    Is $\setcofinality(\WeiDeg)=2^\continuum$?
\end{open}

In Section~\ref{sec:antichains}, we studied the extendibility of antichains
in $\WeiDeg$ and provided some sufficient conditions under which a
$\continuum$-sized antichain is extendible
(\thref{thm:extending_antichain_wei_2}). A natural question that has proven
challenging to fully resolve is the following:

\begin{open}
    Characterize the maximal antichains in the Weihrauch degrees. Is there a maximal antichain of size $\continuum$?
\end{open}

\bibliographystyle{mbibstyle}
\bibliography{bibliography}

\end{document}